\pdfoutput=1
\documentclass[12pt]{amsart}
\tolerance=500
\setlength{\emergencystretch}{3em}

\usepackage{lmodern}
\usepackage{ifthen}
\usepackage{amsfonts}
\usepackage{amsxtra}
\usepackage{amssymb}
\usepackage{mathdots}
\usepackage{array}
\usepackage[margin=1.0in]{geometry}
\usepackage{xcolor}
\definecolor{cite}{rgb}{0.30,0.60,1.00}
\definecolor{url}{rgb}{0.00,0.00,0.80}
\definecolor{link}{rgb}{0.40,0.10,0.20}
\usepackage[colorlinks,linkcolor=link,urlcolor=url,citecolor=cite,pagebackref,breaklinks]{hyperref}
\usepackage{bbm}
\usepackage{mathtools}
\usepackage{mathrsfs}
\usepackage{appendix}
\usepackage[all]{xy}
\usepackage[lite,abbrev,msc-links,alphabetic]{amsrefs}

\usepackage{graphicx}
\usepackage{multirow}
\usepackage{pstricks}
\usepackage{pst-pdf}
\usepackage{enumitem}
\usepackage{pifont}
\usepackage{marvosym}
\usepackage{txfonts}

\usepackage[OT2,T1]{fontenc}
\DeclareSymbolFont{cyrletters}{OT2}{wncyr}{m}{n}
\DeclareMathSymbol{\Sha}{\mathalpha}{cyrletters}{"58}

\usepackage{graphicx}

\makeatletter
\providecommand*{\Dashv}{%
  \mathrel{%
    \mathpalette\@Dashv\vDash
  }%
}
\newcommand*{\@Dashv}[2]{%
  \reflectbox{$\m@th#1#2$}%
}
\makeatother


\numberwithin{equation}{section}

\theoremstyle{plain}
\newtheorem{proposition}{Proposition}[section]
\newtheorem{conjecture}[proposition]{Conjecture}
\newtheorem{corollary}[proposition]{Corollary}
\newtheorem{lem}[proposition]{Lemma}
\newtheorem{theorem}[proposition]{Theorem}

\theoremstyle{definition}
\newtheorem{definition}[proposition]{Definition}

\newtheorem{notation}[proposition]{Notation}

\theoremstyle{remark}
\newtheorem{remark}[proposition]{Remark}
\newtheorem{example}[proposition]{Example}


\renewcommand{\b}[1]{\mathbf{#1}}
\renewcommand{\c}[1]{\mathcal{#1}}
\renewcommand{\d}[1]{\mathbb{#1}}
\newcommand{\f}[1]{\mathfrak{#1}}
\renewcommand{\r}[1]{\mathrm{#1}}
\newcommand{\s}[1]{\mathscr{#1}}
\renewcommand{\sf}[1]{\mathsf{#1}}
\renewcommand{\(}{\left(}
\renewcommand{\)}{\right)}
\newcommand{\res}{\mathbin{|}}
\newcommand{\ol}[1]{\overline{#1}{}}

\newcommand{\ul}{\underline}
\renewcommand{\leq}{\leqslant}
\renewcommand{\geq}{\geqslant}


\newcommand{\bD}{\b D}

\newcommand{\bG}{\b G}

\newcommand{\bM}{\b M}

\newcommand{\bP}{\b P}

\newcommand{\bm}{\b m}

\newcommand{\cD}{\c D}
\newcommand{\cE}{\c E}

\newcommand{\cP}{\c P}

\newcommand{\cS}{\c S}
\newcommand{\cT}{\c T}

\newcommand{\cV}{\c V}

\newcommand{\cX}{\c X}
\newcommand{\cY}{\c Y}

\newcommand{\dA}{\d A}
\newcommand{\dB}{\d B}
\newcommand{\dC}{\d C}
\newcommand{\dD}{\d D}
\newcommand{\dE}{\d E}
\newcommand{\dF}{\d F}

\newcommand{\dL}{\d L}

\newcommand{\dP}{\d P}
\newcommand{\dQ}{\d Q}

\newcommand{\dT}{\d T}

\newcommand{\dZ}{\d Z}

\newcommand{\fZ}{\f Z}

\newcommand{\fc}{\f c}

\newcommand{\fj}{\f j}

\newcommand{\fm}{\f m}
\newcommand{\fn}{\f n}

\newcommand{\fp}{\f p}

\newcommand{\rA}{\r A}
\newcommand{\rB}{\r B}

\newcommand{\rD}{\r D}
\newcommand{\rE}{\r E}
\newcommand{\rF}{\r F}
\newcommand{\rG}{\r G}
\newcommand{\rH}{\r H}
\newcommand{\rI}{\r I}
\newcommand{\rJ}{\r J}

\newcommand{\rM}{\r M}

\newcommand{\rP}{\r P}

\newcommand{\rR}{\r R}
\newcommand{\rS}{\r S}
\newcommand{\rT}{\r T}
\newcommand{\rU}{\r U}
\newcommand{\rV}{\r V}
\newcommand{\rW}{\r W}
\newcommand{\rX}{\r X}

\newcommand{\rb}{\r b}
\newcommand{\rc}{\r c}
\newcommand{\rd}{\r d}

\renewcommand{\rm}{\r m}

\newcommand{\rs}{\r s}

\newcommand{\sA}{\s A}

\newcommand{\sD}{\s D}
\newcommand{\sE}{\s E}

\newcommand{\sG}{\s G}

\newcommand{\sI}{\s I}

\newcommand{\sL}{\s L}

\newcommand{\sO}{\s O}
\newcommand{\sP}{\s P}
\newcommand{\sQ}{\s Q}

\newcommand{\sS}{\s S}

\newcommand{\sV}{\s V}

\newcommand{\sfF}{\sf F}
\newcommand{\sfG}{\sf G}
\newcommand{\sfH}{\sf H}
\newcommand{\sfI}{\sf I}

\newcommand{\sfK}{\sf K}

\newcommand{\sfO}{\sf O}
\newcommand{\sfP}{\sf P}
\newcommand{\sfQ}{\sf Q}
\newcommand{\sfR}{\sf R}

\newcommand{\sfT}{\sf T}
\newcommand{\sfU}{\sf U}
\newcommand{\sfV}{\sf V}

\newcommand{\sfs}{\sf s}

\newcommand{\sfv}{\sf v}

\newcommand{\sfx}{\sf x}

\newcommand{\tI}{\mathtt{I}}

\newcommand{\tN}{\mathtt{N}}

\newcommand{\tR}{\mathtt{R}}

\newcommand{\tT}{\mathtt{T}}

\newcommand{\tc}{\mathtt{c}}

\newcommand{\ts}{\mathtt{s}}


\newcommand{\pres}[2]{\prescript{#1}{}{#2}}

\newcommand{\cind}{\rc\text{-}\r{ind}}
\newcommand{\cl}{\r{cl}}

\newcommand{\cris}{\r{cris}}

\newcommand{\dr}{\r{dR}}
\newcommand{\et}{{\acute{\r{e}}\r{t}}}

\newcommand{\gr}{\r{gr}}

\newcommand{\id}{\r{id}}
\newcommand{\Igs}{\sI\!\!g\!s}

\newcommand{\Inc}{\r{Inc}}

\newcommand{\loc}{\r{loc}}

\newcommand{\mix}{\r{mix}}
\newcommand{\mnm}{\r{min}}

\newcommand{\ram}{\r{ram}}

\newcommand{\Sht}{\r{Sht}}

\newcommand{\sing}{\r{sing}}

\newcommand{\univ}{\r{univ}}
\newcommand{\unr}{\r{unr}}

\DeclareMathOperator{\CH}{CH}

\DeclareMathOperator{\coker}{coker}

\DeclareMathOperator{\End}{End}

\DeclareMathOperator{\Gal}{Gal}

\DeclareMathOperator{\GL}{GL}

\DeclareMathOperator{\Hom}{Hom}

\DeclareMathOperator{\IM}{im}

\DeclareMathOperator{\Ind}{Ind}

\DeclareMathOperator{\Ker}{ker}

\DeclareMathOperator{\Lie}{Lie}
\DeclareMathOperator{\Map}{Map}

\DeclareMathOperator{\ord}{ord}

\DeclareMathOperator{\Res}{Res}
\DeclareMathOperator{\Sel}{Sel}

\DeclareMathOperator{\Spec}{Spec}

\DeclareMathOperator{\Sym}{Sym}

\DeclareMathOperator{\val}{val}

\begin{document}

\title{Survey on bounding Selmer groups for Rankin--Selberg motives}

\author{Yifeng Liu}
\address{Institute for Advanced Study in Mathematics, Zhejiang University, Hangzhou 310058, China}
\email{liuyf0719@zju.edu.cn}

\author{Yichao Tian}
\address{Morningside Center of Mathematics, AMSS, Chinese Academy of Sciences, Beijing 100190, China}
\email{yichaot@math.ac.cn}

\author{Liang Xiao}
\address{New Cornerstone Lab, Beijing International Center for Mathematical Research, Peking University, Beijing 100871, China}
\email{lxiao@bicmr.pku.edu.cn}

\author{Wei Zhang}
\address{Department of Mathematics, Massachusetts Institute of Technology, Cambridge MA 02139, United States}
\email{weizhang@mit.edu}

\author{Xinwen Zhu}
\address{Department of Mathematics, Stanford University, Stanford CA 94305, United States}
\email{zhuxw@stanford.edu}

\date{\today}
\subjclass[2020]{11G05, 11G18, 11G40, 11R34}

\maketitle

\tableofcontents

\section{Introduction}
\label{ss:1}

Solving Diophantine equations has always been a central theme in the study of mathematics, since the ancient Greeks. In the context of modern algebraic geometry, the problem is integrated into the study of Chow groups of varieties defined over number fields. Recall that for a smooth projective variety $X$ over a field $F$ and an integer $r\geq 0$, the Chow group of codimension $r$ cycles of $X$, denoted by $\CH^r(X)$, is defined as the abelian group generated by codimension $r$ points of $X$ modulo rational equivalences. For every prime number $\ell$ different from the characteristic of $F$, we have the cycle class map
\[
\cl_\ell\colon\CH^r(X)\to\rH^{2r}(X,\dQ_\ell(r))
\]
to the $\ell$-adic \'{e}tale cohomology of $X$, through which we wish to turn the study of $\CH^r(X)$ to that of the cohomology. The pro-finite \'{e}tale covering $X_{\ol{F}}\to X$ induces a finite decreasing filtration $\rF^\bullet\rH^{2r}(X,\dQ_\ell(r))$ on $\rH^{2r}(X,\dQ_\ell(r))$ via the Hoschchild--Serre spectral sequence, satisfying that
\[
\frac{\rF^i\rH^{2r}(X,\dQ_\ell(r))}{\rF^{i+1}\rH^{2r}(X,\dQ_\ell(r))}=\rH^i(F,\rH^{2r-i}(X_{\ol{F}},\dQ_\ell(r))).
\]
By taking the preimage under $\cl_\ell$, it induces a decreasing filtration $\rF^\bullet\CH^r(X)$ on $\CH^r(X)$, which \emph{a priori} depends on $\ell$ and is not necessarily exhaustive. When $F$ is a number field, Beilinson and Bloch made the following remarkable predictions in the 1980s (see \cite{Bei87}), commonly known as the \emph{Beilinson--Bloch conjecture}:
\begin{enumerate}
  \item $\rF^\bullet\CH^r(X)$ does not depend on $\ell$ and $\rF^2\CH^r(X)=\CH^r(X)_{\r{tor}}$;\footnote{It is conjectured that $\rF^2\rH^{2r}(X,\dQ_\ell(r))=0$ when $F$ is a number field \cite{Jan89}.}

  \item the induced map $(\CH^r(X)/\rF^1\CH^r(X))\otimes_\dZ\dQ_\ell\to\rH^0(F,\rH^{2r}(X_{\ol{F}},\dQ_\ell(r)))$ is an isomorphism;

  \item the induced map $\rF^1\CH^r(X)\otimes_\dZ\dQ_\ell\to\rH^1(F,\rH^{2r-1}(X_{\ol{F}},\dQ_\ell(r)))$ is injective, whose image coincides with the Bloch--Kato Selmer group $\rH^1_f(F,\rH^{2r-1}(X_{\ol{F}},\dQ_\ell(r)))$ (see Remark \ref{re:selmer}).
\end{enumerate}
Note that (2) is nothing but the Tate conjecture (\cite{Tat65}*{Conjecture~1} over number fields). The three predictions together imply that $\CH^r(X)$ has finite rank, which reveals a fundamental difference between geometry over number fields and that over general fields. For example, it is a well-known theorem of Mumford that $\CH^2(X)$ could have uncountable rank when $F=\dC$ and $X$ is in a certain class of surfaces.

From now on, we assume that $F$ is a number field. The Beilinson--Bloch conjecture motivates us to study $\rH^0(F,\rH^{2r}(X_{\ol{F}},\dQ_\ell(r)))$ and $\rH^1_f(F,\rH^{2r-1}(X_{\ol{F}},\dQ_\ell(r)))$. For the former one, its dimension can be computed through the order of poles of certain $L$-functions. More precisely, it is expected that
\[
\dim_{\dQ_\ell}\rH^0(F,\rH^{2r}(X_{\ol{F}},\dQ_\ell(r)))=-\ord_{s=1}L(s,\rH^{2r}(X_{\ol{F}},\dQ_\ell(r))),
\]
which is equivalent to \cite{Tat65}*{Conjecture~2} under Tate's Conjecture 1. For the latter one, the Beilinson--Bloch--Kato conjecture relates its dimension to the order of zeros of certain $L$-functions. More precisely, it is expected that
\[
\dim_{\dQ_\ell}\rH^1_f(F,\rH^{2r-1}(X_{\ol{F}},\dQ_\ell(r)))=\ord_{s=0}L(s,\rH^{2r-1}(X_{\ol{F}},\dQ_\ell(r))).
\]
The above conjectural formula can be made toward more general motives, rather than the full cohomology of varieties.

In this note, we sketch the proof of our recent result on bounding Bloch--Kato Selmer groups of the so-called Rankin--Selberg motives, focusing on the case of the tensor product of symmetric power of elliptic curves with analytic rank zero.

We start from a brief recall of the Beilinson--Bloch--Kato conjecture for more general motives, which generalizes the Birch and Swinnerton-Dyer conjecture. Let $F$ be a number field contained in $\ol\dQ$ -- \emph{the} algebraic closure of $\dQ$ in $\dC$. Consider a motive $\rV$ over $F$ with coefficients in $\dQ$, for which we may understand as a compatible system $\{\rV_\ell\}_\ell$ of Galois representations of $F$ with coefficients in $\dQ_\ell$ ``of geometric origin''. Then the Beilinson--Bloch--Kato conjecture \cite{BK90} predicts that
\[
\ord_{s=0}L(s,\rV)=\dim\rH^1_f(F,\rV_\ell)-\dim\rH^0(F,\rV_\ell)
\]
for every prime number $\ell$. Here, $L(s,\rV)$ denotes the $L$-function of $\rV$, or rather $\rV_\ell$ for every $\ell$, which is conjectured to have a meromorphic continuation to $\dC$ and satisfy a functional equation relating $L(-s,\rV^\vee(1))$; and $\rH^1_f(F,\rV_\ell)$ denotes the Bloch--Kato Selmer group of $\rV_\ell$ (see the remark right below), as a finite-dimensional $\dQ_\ell$-vector space.

\begin{remark}\label{re:selmer}
We recall that $\rH^1_f(F,\rV_\ell)$ is the subspace of $\rH^1(F,\rV_\ell)$ consisting of classes $c$ such that for every finite place $v$ of $F$, its localization $\loc_\ell(c)$ belongs to a certain subspace $\rH^1_f(F_v,\rV_\ell)$ of $\rH^1(F_v,\rV_\ell)$. When $v\nmid\ell$, $\rH^1_f(F_v,\rV_\ell)=\rH^1_\unr(F_v,\rV_\ell)$ consists of the unramified classes. When $v\mid\ell$ and $\rV_\ell$ is crystalline at $v$, $\rH^1_f(F_v,\rV_\ell)$ parameterizes crystalline extensions of $\rV_\ell$ by $\dQ_\ell$ at $v$; in general, $\rH^1_f(F_v,\rV_\ell)\coloneqq\Ker\(\rH^1(F_v,\rV_\ell)\to\rH^1(F_v,\rV_\ell\otimes\dB_\cris)\)$.

When $\rV_\ell=\rV_\ell(A)$ is the rational $\ell$-adic Tate module of an abelian variety $A$ over $F$, $\rH^1_f(F_v,\rV_\ell)$ is canonically isomorphic to $\(\varprojlim_m\Sel_{\ell^m}(A)\)\otimes_{\dZ_\ell}\dQ_\ell$, where $\Sel_M(A)$ denotes the $M$-Selmer group of $A$ over $F$ for every positive integer $M$.
\end{remark}

Fix a positive integer $n$ and a pair of rational elliptic curves $A=(A_0,A_1)$. Let $n_0$ and $n_1$ be the unique even and odd members in $\{n,n+1\}$, respectively. Recall that for every prime number $\ell$ and $\alpha\in\{0,1\}$, we have the $\ell$-adic Tate module $\rT_\ell(A_\alpha)=\varprojlim_{m}A_\alpha(\ol\dQ)[\ell^m]$ of $A_\alpha$ as a $\dZ_\ell[\Gal(\ol\dQ/\dQ)]$-module. Put
\begin{align*}
\rT^A_{0,\ell}&\coloneqq\(\Sym_{\dZ_\ell}^{n_0-1}\rT_\ell(A_0)\)(1-r_0),\\
\rT^A_{1,\ell}&\coloneqq\(\Sym_{\dZ_\ell}^{n_1-1}\rT_\ell(A_1)\)(-r_1),\\
\rT^A_\ell&\coloneqq\rT^A_{0,\ell}\otimes_{\dZ_\ell}\rT^A_{1,\ell}.
\end{align*}
Then $\rT^A_\ell$ is a $\dZ_\ell[\Gal(\ol\dQ/\dQ)]$-module of $\dZ_\ell$-rank $n(n+1)$, equipped with a canonical polarization $\rT^A_\ell\simeq(\rT^A_\ell)^\vee(1)$. Put $\rV^A_\ell\coloneqq\rT^A_\ell\otimes_{\dZ_\ell}\dQ_\ell$. Then we obtain a canonically polarized motive $\rV^A\coloneqq\{\rV^A_\ell\}_\ell$ over $\dQ$, and its base change $\rV^A_F$ to $F$. It is clear that $\rH^0(F,\rV^A_\ell)=0$ for every prime number $\ell$. In particular, when $n=1$, the Beilinson--Bloch--Kato conjecture recovers (part of) the BSD conjecture.

The following theorem is a combination of \cite{LTXZZ}*{Theorem~1.1.1} and \cites{NT1,NT2}.

\begin{theorem}\label{th:sym}
Suppose that
\begin{enumerate}[label=(\alph*)]
  \item $F$ is a solvable CM field;

  \item $[F:\dQ]>2$;

  \item $A_0$ and $A_1$ are not geometrically isogenous to each other;

  \item neither $A_0$ nor $A_1$ has complex multiplication over $\ol\dQ$.
\end{enumerate}
If the $L$-value $L(0,\rV^A_F)$ does not vanish, then $\rH^1_f(F,\rV^A_\ell)$ vanishes for all but finitely many prime numbers $\ell$.
\end{theorem}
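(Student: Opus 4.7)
The plan is to deduce Theorem \ref{th:sym} from \cite{LTXZZ}*{Theorem~1.1.1}, which establishes the rank-zero case of the Beilinson--Bloch--Kato conjecture for Rankin--Selberg motives attached to pairs of conjugate self-dual, regular algebraic cuspidal automorphic representations of $\GL_{n_0}\times\GL_{n_1}$ over a CM field, subject to standard residual-Galois hypotheses. The first step is to realize the motive $\rV^A_F$ automorphically. By the symmetric-power functoriality theorems of Newton--Thorne \cites{NT1,NT2}, for each $\alpha\in\{0,1\}$ the Galois representation $\Sym^{n_\alpha-1}\rV_\ell(A_\alpha)$ is automorphic over $\dQ$, corresponding to a cuspidal representation $\Pi_\alpha$ of $\GL_{n_\alpha}(\dA_\dQ)$; Newton--Thorne requires precisely the non-CM assumption (d) for cuspidality. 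Base-changing $\Pi_\alpha$ along the solvable CM extension $F/\dQ$ via Arthur--Clozel yields cuspidal, conjugate self-dual, regular algebraic representations $\Pi_{\alpha,F}$ of $\GL_{n_\alpha}(\dA_F)$: cuspidality survives thanks to hypothesis (d), and hypothesis (c) ensures that $\Pi_{0,F}$ and $\Pi_{1,F}$ are mutually distinct up to character twist. Then $\rV^A_F$ is the Rankin--Selberg motive of $(\Pi_{0,F},\Pi_{1,F})$ with Tate twist normalized so that the center of symmetry lies at $s=0$, and $L(0,\rV^A_F)=L(\tfrac12,\Pi_{0,F}\times\Pi_{1,F})$.

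The second step is to verify the residual hypotheses of \cite{LTXZZ}*{Theorem~1.1.1} for all but finitely many $\ell$: residual irreducibility of each $\ol\Pi_{\alpha,F}$ on $\Gal(\ol\dQ/F)$, a big-image condition inside $\GL_{n_\alpha}(\ol{\dF_\ell})$, genericness of Satake parameters at auxiliary level-raising primes, and independence of the two residual systems. By Serre's open image theorem (hypothesis (d)), for $\ell$ outside a finite set the image of $\rV_\ell(A_\alpha)$ on $\Gal(\ol\dQ/\dQ)$ is all of $\GL_2(\dF_\ell)$, and its restriction to $\Gal(\ol\dQ/F)$ still contains $\SL_2(\dF_\ell)$, so that $\Sym^{n_\alpha-1}$ yields the expected standard large image. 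Combined with Goursat's lemma and the non-isogeny hypothesis (c), this yields the required independence of the two residual systems. Hypotheses (a) and (b) put us in the geometric regime where the auxiliary indefinite unitary Shimura varieties $\Sh(\UG(n_0-1,1)\times\UG(n_1-1,1))$ used in \cite{LTXZZ} are positive-dimensional.

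The geometric heart of \cite{LTXZZ} then runs a bipartite Euler system on the above Shimura variety and its definite companion. Given a putative nonzero class $s\in\rH^1_f(F,\rV^A_\ell)$, the first explicit reciprocity law---established by computing cycle classes of diagonal embeddings on the basic locus of the indefinite unitary Shimura variety at a level-raising prime $v$ via the Xiao--Zhu theory of Tate cycles on affine Deligne--Lusztig varieties---identifies the singular part of $\loc_v(s)$ with the class of an arithmetic diagonal cycle, whose nontriviality is governed by the Ichino--Ikeda formula (arithmetic Gan--Gross--Prasad) and is therefore controlled by the nonvanishing of $L(0,\rV^A_F)$. A second reciprocity law, arising from a Jochnowitz-style congruence interchanging definite and indefinite unitary groups at an auxiliary prime, pins down the unramified part of $\loc_v(s)$ in terms of the same central value. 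Combining the two reciprocities with a Kolyvagin--Tchebotarev argument over the auxiliary primes annihilates $s$ for all but finitely many $\ell$.

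The main obstacle is the first explicit reciprocity law: it rests on a delicate identification of the cycle class of the diagonal embedding inside the nearby-cycles sheaf on the basic stratum with a Hecke-eigen class on the definite side, via the geometric Satake equivalence and the Xiao--Zhu theory of cycles on affine Deligne--Lusztig varieties. Matching the automorphic input of Newton--Thorne with the local Satake parameters demanded by this reciprocity at level-raising primes, and simultaneously controlling all residual hypotheses uniformly in $\ell$, is the point where the finitely many exceptional primes enter and where the full force of the \cite{LTXZZ} framework is required.
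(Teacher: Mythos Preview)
Your overall reduction is correct and matches the paper exactly: Theorem~\ref{th:sym} is stated there as the combination of \cite{LTXZZ}*{Theorem~1.1.1} with Newton--Thorne symmetric-power functoriality \cites{NT1,NT2} and Arthur--Clozel solvable base change, and the residual hypotheses for all but finitely many $\ell$ are handled via Serre's open-image theorem applied under (c) and (d), just as you outline.

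Your sketch of the internal mechanism of \cite{LTXZZ} in the rank-zero case has a few points reversed, however. First, the singular quotient at a level-raising prime $\fp$ is computed for the \emph{constructed} class $c_{m,p}$ arising from the diagonal cycle, not for a putative Selmer class $s$: any $s\in\rH^1_f$ is unramified at $\fp$ by definition, so $\partial_\fp\loc_\fp(s)=0$. The first explicit reciprocity law (Proposition~\ref{pr:reciprocity} in the paper) identifies $\partial_\fp\loc_\fp(c_{m,p})$ with a period sum on the \emph{definite} unitary group, and the vanishing of $s$ follows from the local Tate pairing between the unramified $\loc_\fp(s)$ and this singular image (Proposition~\ref{pr:kolyvagin}). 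Second, in rank zero only \emph{one} reciprocity law and one flavour of auxiliary prime are used; the second reciprocity law in \cite{LTXZZ} is for the rank-one case and plays no role here. Third, the nonvanishing input is the automorphic Gan--Gross--Prasad conjecture for unitary groups (Proposition~\ref{pr:ggp}, from \cite{BPLZZ}), not the arithmetic GGP or an Ichino--Ikeda formula: one needs only the existence of a nonzero definite period, not a precise $L$-value identity.
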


In comparison, we mention that Tate's both Conjecture 1 and 2 in \cite{Tat65} hold when $F$ is totally real or imaginary CM and $X=A_0^{m_0}\times A_1^{m_1}$ for arbitrary elliptic curves $A_0,A_1$ over $F$ and integers $m_0\geq 0,m_1\geq 1$. This is essentially an application of recent theorems on the Hodge conjecture and the Mumford--Tate conjecture for product of low dimensional abelian varieties, potential automorphy theorems and Shahidi's theorem on the non-vanishing of Rankin--Selberg $L$-functions at the near central point; see \cite{LZ} for more details.

\begin{remark}
We briefly recall the main history (which is by no means complete) of results of the kind in Theorem \ref{th:sym} (in particular, the case of central critical value).
\begin{enumerate}
  \item When $n=1$ and $F=\dQ$, the result in Theorem \ref{th:sym} was obtained by the groundbreaking work of Kolyvagin \cite{Kol90}, who invented the technique known as the Euler system; the proof uses the Gross--Zagier formula \cite{GZ86} to pass to the rank $1$ case.

  \item In \cite{BD05}, Bertolini and Darmon created another method to produce cohomology classes to bound Selmer groups via level raising, which was later streamlined by Howard \cite{How06} known as the bipartite Euler system; it can be used to give an alternative proof of Kolyvagin's result (under a mild restriction) without using the Gross--Zagier formula.

  \item In \cite{Liu1}, one of us proved the result in Theorem \ref{th:sym} when $n=2$ and $F=\dQ$ (under some mild restrictions).

  \item In \cite{DR17}, Darmon and Rotger proved a similar result in the spirit of Theorem \ref{th:sym} for the motive $\{\rV_\ell(A)\otimes\rho_1\otimes\rho_2\}_\ell$, where $A$ is an elliptic curve over $\dQ$ and $\rho_1,\rho_2$ are two two-dimensional Artin representations of $\dQ$ satisfying certain conditions.

  \item In \cite{Liu2}, one of us proved a similar result in the spirit of Theorem \ref{th:sym} for the motive $\{(\otimes\Ind^F_\dQ\rV_\ell(A))(-1)\}_\ell$, where $F$ is a totally real cubic extension of $\dQ$ and $A$ is an elliptic curve over $F$ satisfying certain mild conditions. Here, $\otimes\Ind$ denotes the tensor induction.
\end{enumerate}
We remark that the cohomology classes used to bound Selmer groups in (3), (5), and \cite{LTXZZ} are all in the spirit of Bertolini--Darmon.
\end{remark}

\begin{remark}
During the preparation of this note, we learnt that Newton and Thorne have extended their results to modular elliptic curves without CM over arbitrary totally real fields \cite{NT3}. With this update, Theorem \ref{th:sym} holds in a more general setting: namely, we start from a pair of modular elliptic curves $(A_0,A_1)$ over $F^+$ -- the maximal totally real subfield of $F$ -- and conclude the same result without condition (a).
\end{remark}

We will walk through the main steps of the proof of Theorem \ref{th:sym} (under certain simplification) from Section \ref{ss:2} to Section \ref{ss:8}. In the last section, we explain that various maps of Hecke modules in the previous sections can be interpreted within the framework of categorical local Langlands correspondence recently proposed by one of us \cite{Zhu20}.

In order to reduce the technical burden, we will gradually introduce restrictions on the situation we consider. The first one is that from now on, we assume that $F$ is a CM field containing an imaginary quadratic field $F^-$.

We collect some notation throughout the survey.
\begin{itemize}
  \item Write $n_\alpha=2r_\alpha+\alpha$ for $\alpha=0,1$ (so that $\{n_0,n_1\}=\{n,n+1\}$).

  \item Denote by $F^+$ the maximal totally real subfield contained in $F$, with $\tc\in\Gal(F/F^+)$ the complex conjugation.

  \item Denote by $H$ the Hilbert class field of $F$ and $H^+\subseteq H$ its maximal totally real subfield.

  \item Let $\tau\colon F\hookrightarrow\dC$ be the natural inclusion $F\subseteq\ol\dQ\subseteq\dC$; and put $\Phi\coloneqq\{\tau'\colon F\hookrightarrow\dC\res\tau'\res_{F^-}=\tau\res_{F^-}\}$, which gives a CM type of $F$.

  \item Let $\Sigma^F$ be the set of (rational) prime factors of the discriminant of $F$.

  \item Let $\Sigma^A$ be the union of $\Sigma^F$ and the set of (rational) prime factors of the conductors of $A_0$ and $A_1$.

  \item For every prime number $p$, we fix an isomorphism $\dC\simeq\ol\dQ_p$ and denote by $\fp,\fp^-,\fp^+$ the ($p$-adic) places of $F,F^-,F^+$ induced by the inclusion $F\subseteq\ol\dQ_p$, respectively.
\end{itemize}

We warn the readers that our notation and enumeration of conditions here could be different from the corresponding items in \cite{LTXZZ}, as we have modified the setup so that it suits better to the particular situation we consider in this note.

\subsection*{Acknowledgements}

We thank the anonymous referee for careful reading and helpful comments.

\section{Vanishing of Selmer groups}
\label{ss:2}

In this section, we present the Galois-theoretical argument for how one can show the vanishing of Selmer groups, following the original idea of Kolyvagin \cite{Kol90}.

We start by introducing a set of restrictions on the prime $\ell$. Define the following polynomials in $\dZ[u,v]$:
\begin{align*}
L_0(u,v)&=1,\quad L_1(u,v)=v^2-2u,\\
L_j(u,v)&=v^{2j}-\sum_{k=1}^j\binom{2j}{k}u^k L_{j-k}(u,v),\\
L(u,v)&=\prod_{j=1}^{r_1}\((L_{2j}(u,v)+u^{2j+1}+u^{2j-1})\prod_{i=0}^{r_0}(u^{2r_0-2j}L_{2j}(u,v)-u^{2r_0+2i}-u^{2r_0-2i})\).
\end{align*}

We introduce the first set of restrictions on the prime number $\ell$ for the coefficients.
\begin{description}
  \item[(L1)] The prime $\ell$ does not belong to $\Sigma^A$.

  \item[(L2)] There exists $(u,v)\in\dZ^2$ such that $\ell\nmid L(u,v)$.

  \item[(L3)] The image of $\Gal(\ol\dQ/\widetilde{F})$ in $\GL_{\dZ_\ell}(\rT_\ell(A_0))\times\GL_{\dZ_\ell}(\rT_\ell(A_1))$ consists of all pairs $(\gamma_0,\gamma_1)$ with $\det\gamma_0=\det\gamma_1$, where $\widetilde{F}$ denotes the normal closure of $F$ in $\ol\dQ$.
\end{description}

It is elementary that (L1) and (L2) exclude only finitely many $\ell$. By Serre's theorem \cite{Ser72}*{Th\'{e}or\`{e}me~6}, (L3) also excludes only finitely many $\ell$ under Theorem \ref{th:sym}(c,d). Condition (L3) says that the image is the largest possible one; while (L2) says that the largest possible image is indeed large enough to have level-raising primes modulo $\ell$ (defined below).

Now we take a prime number $\ell$ satisfying (L1--3).

\begin{definition}\label{de:level_raising}
For every positive integer $m$, a prime number $p$ is a \emph{level-raising prime modulo $\ell^m$} (with respect to the data $(F,A,n)$) if it satisfies
\begin{itemize}
  \item[(P1)] $p$ is odd, is inert in $F^-$ and splits completely in $H^+$.

  \item[(P2)] $p$ does not belong to $\Sigma^A$.

  \item[(P3)] $a_p(A_0)^2\equiv(p+1)^2\mod\ell^m$.

  \item[(P4)] $\ell\nmid p\prod_{i=1}^{n_0}(1-(-p)^i)$.

  \item[(P5)] $\ell\nmid L(p,a_p(A_1))$.
\end{itemize}
\end{definition}

\begin{remark}
In Definition \ref{de:level_raising}, (P4) is equivalent to that $\ell$ does not divide the cardinality of the finite unitary group of rank $n_0$ with respect to $\dF_{p^2}/\dF_p$; (P5) is to make sure that the tensor product Galois module $\rT^A_\ell\otimes\dF_\ell$ does not have ``extra level raising congruence''. See Lemma \ref{le:local_galois} below for their implications.
\end{remark}

\begin{lem}\label{le:local_galois}
Suppose that $p$ is level-raising prime modulo $\ell^m$. Then
\[
\rH^1_\unr(F_\fp,\rT^A_\ell\otimes\dZ/\ell^m),\quad
\rH^1_\sing(F_\fp,\rT^A_\ell\otimes\dZ/\ell^m)
\]
are both free $\dZ/\ell^m$-modules of rank $1$.
\end{lem}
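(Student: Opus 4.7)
The plan is to recast the statement as a linear algebra question about $\Frob_\fp$ acting on $\rT^A_\ell\otimes\dZ/\ell^m$ and to resolve it by an eigenvalue bookkeeping.

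First, conditions (P1) and (P2) ensure that $\rT^A_\ell$ is unramified at $\fp$, so the standard calculation for unramified modules gives
\[
\rH^1_\unr(F_\fp,\rT^A_\ell\otimes\dZ/\ell^m) \simeq (\rT^A_\ell\otimes\dZ/\ell^m)/(\Frob_\fp-1),
\]
while local Tate duality combined with the polarization $\rT^A_\ell\simeq(\rT^A_\ell)^\vee(1)$ identifies $\rH^1_\sing$ with the Pontryagin dual of $\rH^1_\unr$. It therefore suffices to prove that the cokernel above is a free cyclic $\dZ/\ell^m$-module. By the elementary divisor theorem over $\dZ_\ell$, this reduces to two assertions: (i) among the eigenvalues $\{\mu_i\nu_j\}$ of $\Frob_\fp$ on $\rT^A_\ell\otimes_{\dZ_\ell}\ol\dQ_\ell$ (where $\mu_i,\nu_j$ run over the eigenvalues on the two tensor factors $\rT^A_{0,\ell}$ and $\rT^A_{1,\ell}$), there is a unique pair $(i,j)$ with $\mu_i\nu_j\equiv 1\bmod\ell$; and (ii) for that pair the congruence already holds modulo $\ell^m$.

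To produce the distinguished pair, observe that by (P1) the residue field of $\fp$ is $\dF_{p^2}$, so $\Frob_\fp=\Frob_p^2$. Writing $\alpha_\alpha,\beta_\alpha$ for the Frobenius eigenvalues on $\rT_\ell(A_\alpha)$ at $p$ (so $\alpha_\alpha\beta_\alpha=p$), the level-raising congruence (P3) forces $\{\alpha_0^2,\beta_0^2\}\equiv\{1,p^2\}\bmod\ell^m$, whence $\mu_i\equiv p^{2(r_0-i)}\bmod\ell^m$ for $i=0,\dots,n_0-1$. Since $n_1=2r_1+1$ is odd, a direct calculation gives $\nu_{r_1}=(\alpha_1\beta_1)^{2r_1}p^{-2r_1}=1$ identically, and more generally $\nu_{r_1+k}=(\alpha_1/\beta_1)^{2k}$. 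Consequently the pair $(i,j)=(r_0,r_1)$ satisfies $\mu_{r_0}\nu_{r_1}\equiv 1\bmod\ell^m$, giving (ii).

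The uniqueness (i) splits into two cases. For $j=r_1$ and $i\ne r_0$, the offending relation is $\ell\mid p^{2(r_0-i)}-1=(p^{|r_0-i|}-1)(p^{|r_0-i|}+1)$ for some $1\le|r_0-i|\le r_0$; these divisibilities are precisely those of $|\UG_{n_0}(\dF_{p^2}/\dF_p)|=p^{n_0(n_0-1)/2}\prod_{i=1}^{n_0}(p^i-(-1)^i)$, which (P4) forbids. For $j\ne r_1$, set $k=j-r_1\ne 0$; the offending congruence $(\alpha_1/\beta_1)^{2k}\equiv p^{2(i-r_0)}\bmod\ell$ unwinds to $\alpha_1^{4|k|}\equiv p^{2l}\bmod\ell$ for some integer $l\in[|k|-r_0,|k|+r_0]$. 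Combined with $\alpha_1^{4|k|}\beta_1^{4|k|}=p^{4|k|}$, this is equivalent to
\[
L_{2|k|}(p,a_p(A_1))\equiv p^{2(|k|+i)}+p^{2(|k|-i)}\bmod\ell
\]
for some $i\in\{0,\dots,r_0\}$, i.e., to the vanishing modulo $\ell$ of one of the factors $u^{2r_0-2|k|}L_{2|k|}(u,v)-u^{2r_0+2i}-u^{2r_0-2i}$ of $L(u,v)$ at $(u,v)=(p,a_p(A_1))$; this is exactly what (P5) excludes. The main work, and the only real obstacle, is this last combinatorial identification of eigenvalue conditions on $\rT^A_{1,\ell}$ with the explicit factors of the polynomial $L(u,v)$.
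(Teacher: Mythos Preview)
Your proof is correct and is precisely the exercise the paper invites the reader to carry out (the paper's own proof consists of the single sentence ``This is a direct consequence of Definition~\ref{de:level_raising}. We leave it to readers as an exercise.'').  The reduction via Smith normal form to the two assertions (i) and (ii) is sound: condition (i) forces $\Frob_\fp-1$ to have corank exactly $1$ over $\dF_\ell$, so the cokernel over $\dZ_\ell$ is cyclic, and then $v_\ell(\det(\Frob_\fp-1))=v_\ell(\mu_{r_0}\nu_{r_1}-1)\geq m$ by (ii); your identification of the exceptional eigenvalue congruences with the factors of $L(p,a_p(A_1))$ and with $\prod_{i=1}^{n_0}(1-(-p)^i)$ is exactly how (P4) and (P5) are designed to be used.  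One small clarification worth recording: the eigenvalues $\mu_i,\nu_j$ a priori lie only in $\ol\dZ_\ell$, but this causes no trouble, since (i) is really the statement that $1$ is a simple root of the characteristic polynomial of $\Frob_\fp$ modulo $\ell$ (a statement over $\dF_\ell$), and for (ii) the product $\mu_{r_0}\nu_{r_1}=\mu_{r_0}$ lies in $\dZ_\ell$ because $\nu_{r_1}=1$ on the nose.
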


Recall that $\rH^1_\sing(F_\fp,-)$ is defined as the quotient $\rH^1(F_\fp,-)/\rH^1_\unr(F_\fp,-)$.

\begin{proof}
This is a direct consequence of Definition \ref{de:level_raising}. We leave it to readers as an exercise.
\end{proof}

\begin{definition}\label{de:annihilator}
A \emph{system of Selmer annihilators} for $\rT^A_\ell$ consists for every integer $m>m_\ell$ (for some nonnegative integer $m_\ell$ depending on $\ell$) and all but finitely many level-raising prime $p$ modulo $\ell^m$, an element $c_{m,p}\in\rH^1(F,\rT^A_\ell\otimes\dZ/\ell^m)$ satisfying
\begin{enumerate}[label=(\alph*)]
  \item $\loc_v(c_{m,p})\in\rH^1(F_v,\rT^A_\ell\otimes\dZ/\ell^m)$ is crystalline (that is, the reduction of an element in $\rH^1(F_v,\rT^A_\ell)$ whose image in $\rH^1(F_v,\rV^A_\ell)$ parameterizes a crystalline extension) for every place $v$ of $F$ above $\ell$;

  \item the image of $\loc_\fp(c_{m,p})$ in $\rH^1_\sing(F_\fp,\rT^A_\ell\otimes\dZ/\ell^m)$ has ($\ell$-power) order at least $\ell^{m-m_\ell}$;

  \item $\loc_v(c_{m,p})\in\rH^1(F_v,\rT^A_\ell\otimes\dZ/\ell^m)$ is unramified for every nonarchimedean place $v\neq\fp$ of $F$ not above $\Sigma^A\cup\{\ell\}$.
\end{enumerate}
\end{definition}

\begin{proposition}\label{pr:kolyvagin}
Suppose that $F$ and $A$ satisfy (a--d) in Theorem \ref{th:sym} (and the simplifying assumption that $F$ contains an imaginary quadratic field). Then $\rH^1_f(F,\rV^A_\ell)=0$ for every prime number $\ell$ satisfying (L1--3) such that a system of Selmer annihilators for $\rT^A_\ell$ exists.
\end{proposition}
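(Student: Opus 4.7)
The plan is to execute a Kolyvagin-type global duality argument using the hypothesized annihilator classes $c_{m,p}$ to kill any nonzero element of $\rH^1_f(F,\rV^A_\ell)$. Suppose for contradiction that $\rH^1_f(F,\rV^A_\ell)\neq 0$, and choose $s\in\rH^1_f(F,\rT^A_\ell)$ whose image in $\rH^1_f(F,\rV^A_\ell)$ is nonzero. Write $s_m\in\rH^1_f(F,\rT^A_\ell/\ell^m)$ for its reduction. Since $s$ is not $\dZ_\ell$-torsion, there is a constant $C_s$ independent of $m$ such that $s_m$ has order at least $\ell^{m-C_s}$ in $\rH^1(F,\rT^A_\ell/\ell^m)$.

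The first step --- and the crux of the argument --- is a Chebotarev density construction. For each sufficiently large $m$, I would produce a level-raising prime $p$ modulo $\ell^m$ such that the localization $\loc_\fp(s_m)$ has order at least $\ell^{m-C_s}$ in the free rank one $\dZ/\ell^m$-module $\rH^1_\unr(F_\fp,\rT^A_\ell/\ell^m)$. The idea is to interpret the conditions (P1--P5) together with the desired largeness of $\loc_\fp(s_m)$ as a Frobenius conjugacy-class condition in the Galois group of the compositum of $\widetilde{F}(\rT^A_\ell/\ell^m)$ with the cyclic extension of $F$ cut out by $s_m$ via Kummer theory, and then invoke Chebotarev. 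The largeness hypothesis (L3) guarantees that the relevant conjugacy class is nonempty, while (L2) together with (P1,P2) carve out a set of positive density compatible with (P3--P5).

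The second step is the Poitou--Tate global reciprocity law. Using the canonical polarization $\rT^A_\ell\simeq(\rT^A_\ell)^\vee(1)$, the cup product composed with local invariant maps yields
\[
\sum_v\langle\loc_v(s_m),\loc_v(c_{m,p})\rangle_v=0\in\dZ/\ell^m.
\]
I analyze the contributions place-by-place. For $v$ outside $\Sigma^A\cup\{\ell\}\cup\{\fp\}$ both classes are unramified, so the term vanishes. For $v\mid\ell$ both classes are crystalline, so the term vanishes by the Bloch--Kato self-duality of $\rH^1_f(F_v,-)$. For $v\in\Sigma^A$ the class $\loc_v(s_m)$ differs from an unramified one by torsion bounded independently of $m$ (this is the integral version of the Bloch--Kato unramified condition at $v\nmid\ell$), so the contribution is bounded by a constant absorbable into $m_\ell$. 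At $v=\fp$, Lemma \ref{le:local_galois} combined with Tate local duality gives a perfect pairing $\rH^1_\unr\times\rH^1_\sing\to\dZ/\ell^m$; since $\loc_\fp(s_m)$ is $\ell^{C_s}$ times a generator on the unramified side and the singular part of $\loc_\fp(c_{m,p})$ is $\ell^{m_\ell}$ times a generator by Definition \ref{de:annihilator}(b), the $\fp$-term has order at least $\ell^{m-C_s-m_\ell}$. For $m$ sufficiently larger than $C_s+m_\ell$ this contradicts the global identity.

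The hard part is the first step. The subtlety is that the Kummer-style extension cut out by $s_m$ is not independent of $\widetilde{F}(\rT^A_\ell/\ell^m)$, so one must work inside the full compositum and verify that the joint conjugacy class prescribed simultaneously by ``$\loc_\fp(s_m)$ is large'' and by (P1--P5) is nonempty --- this is where (L3), supplemented by (L2), enters decisively. The bookkeeping of the bad-prime contributions at $\Sigma^A$ is a secondary technicality, ultimately controlled by the a priori boundedness of the torsion of $\rH^1(F_v,\rT^A_\ell)$ for $v\in\Sigma^A$, and it is precisely what determines how large the threshold $m_\ell$ in Definition \ref{de:annihilator} must be.
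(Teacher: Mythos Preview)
Your approach is essentially the paper's own: pick a nonzero Selmer class $s$, use Chebotarev under (L2,L3) to find a level-raising prime $p$ at which its localization generates $\rH^1_\unr(F_\fp,\rT^A_\ell/\ell^m)$, then apply global duality against $c_{m,p}$ to derive a contradiction. The paper packages the Chebotarev step as a citation to \cite{Liu1}*{Lemma~5.8} and obtains $\loc_\fp(s_m)$ as an honest generator, so your auxiliary constant $C_s$ can be taken to be zero once residual irreducibility (from (L3)) is used to make $s$ non-$\ell$-divisible in $\rH^1_f(F,\rT^A_\ell)$.

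There is one genuine gap in your place-by-place analysis. At $v\in\Sigma^A$ you assert that $\loc_v(s_m)$ differs from an unramified class by bounded torsion and conclude that the local term is bounded. But this does not follow: the unramified part of $\loc_v(s_m)$ still pairs with the \emph{singular} part of $\loc_v(c_{m,p})$, and Definition~\ref{de:annihilator} imposes no condition whatsoever on $c_{m,p}$ at primes above $\Sigma^A$. The paper's fix is cleaner and stronger: since $\rV^A_\ell$ satisfies purity of the monodromy filtration, one has $\rH^1(F_v,\rV^A_\ell)=0$ for every $v\mid\Sigma^A$, hence $\rH^1(F_v,\rT^A_\ell)$ is a finite $\dZ_\ell$-module, annihilated by some $\ell^{m'}$ independent of $m$. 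Then $\ell^{m'}\loc_v(s)=0$ in $\rH^1(F_v,\rT^A_\ell)$, so $\ell^{m'}\loc_v(s_m)=0$, and therefore $\ell^{m'}\langle s_m,c_{m,p}\rangle_v=0$ regardless of the behaviour of $c_{m,p}$ at $v$. Taking $m>m_\ell+m'$ yields the contradiction. With this correction your argument goes through.
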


Once we have this proposition, the remaining task for the proof of Theorem \ref{th:sym} is to show that, as long as $L(0,\rV^A_F)\neq 0$, a system of Selmer annihilators for $\rT^A_\ell$ exists for all but finitely many $\ell$.

\begin{proof}
First, we realize that for every place $v$ of $F$ above $\Sigma^A$, $\rH^1(F_v,\rV^A_\ell)$ vanishes since $\rV^A_\ell$ satisfies the ``purity of monodromy filtration'', which implies that $\rH^1(F_v,\rT^A_\ell)$ is a finite $\dZ_\ell$-module. Thus, there exists a positive integer $m'$ such that $\prod_{v\mid\Sigma^A}\rH^1(F_v,\rT^A_\ell)$ is annihilated by $\ell^{m'}$.

We prove the proposition by contradiction. Suppose that $\rH^1_f(F,\rV^A_\ell)\neq 0$. Take $m\coloneqq m_\ell+m'+1$. By (L2,L3) and the Chebotarev density theorem, there are infinitely many level-raising primes modulo $\ell^m$. Thus, we can take such a prime $p$ for which the element $c_{m,p}$ (simply written as $c_m$ in the proof below) in Definition \ref{de:annihilator} exists.

By (L3), $\rT^A_\ell$ is residually absolutely irreducible. By \cite{Liu1}*{Lemma~5.8}, one can find an element $s\in\rH^1_f(F,\rT^A_\ell)$ whose image $s_m$ in $\rH^1_f(F,\rT^A_\ell\otimes\dZ/\ell^m)$ satisfies
\begin{itemize}
  \item[(a')] $\loc_v(s_m)$ is crystalline for every place $v$ of $F$ above $\ell$;

  \item[(b')] $\loc_\fp(s_m)$ is a generator of $\rH^1_\unr(F_\fp,\rT^A_\ell\otimes\dZ/\ell^m)$;

  \item[(c')] $\loc_v(s_m)$ is unramified for every nonarchimedean place $v\neq\fp$ of $F$ not above $\Sigma^A\cup\{\ell\}$.
\end{itemize}

For every prime $v$ of $F$, we have the local Tate pairing
\[
\langle\;,\;\rangle_v\colon\rH^1(F_v,\rT^A_\ell\otimes\dZ/\ell^m)\times\rH^1(F_v,\rT^A_\ell\otimes\dZ/\ell^m)
\to\rH^2(F_v,\dZ/\ell^m(1))=\dZ/\ell^m.
\]
Since both $s_m$ and $c_m$ belong to $\rH^1(F,\rT^A_\ell\otimes\dZ/\ell^m)$, we have $\sum_v\langle s_m,c_m\rangle_v=0$. On the other hand, we have $\ell^{m'}\langle s_m,c_m\rangle_v=0$ for $v$ above $\Sigma^A$; $\langle s_m,c_m\rangle_v=0$ for $v$ above $\ell$ by (a) and (a'); and $\langle s_m,c_m\rangle_v=0$ for $v\neq\fp$ of $F$ not above $\Sigma^A\cup\{\ell\}$ by (c) and (c'). As a consequence, $\ell^{m'}\langle s_m,c_m\rangle_\fp=0$. Since $\langle\;,\;\rangle_\fp$ induces a perfect pairing between $\rH^1_\unr(F_\fp,\rT^A_\ell\otimes\dZ/\ell^m)$ and $\rH^1_\sing(F_\fp,\rT^A_\ell\otimes\dZ/\ell^m)$ by the Tate duality, this contradicts with (b) and (b') by Lemma \ref{le:local_galois} and the choice of $m$. The proposition is proved.
\end{proof}

We briefly explain the idea of constructing the class $c_{m,p}$. We will construct
\begin{itemize}
  \item a smooth projective scheme $\cP$ over $F$ of dimension $2n-1$,

  \item a closed subscheme $\cD$ of $\cP$ of codimension $n$,

  \item a commutative ring $\dT$ acting on $\cP$ via \'{e}tale correspondences and an ideal $\fn$ of $\dT$ such that $\dT/\fn$ is a local $\dZ/\ell^m$-algebra, satisfying that $\rH^{2n}(\cP_{\ol\dQ},\dZ_\ell(n))/\fn=0$ and $\rH^{2n-1}(\cP_{\ol\dQ},\dZ_\ell(n))/\fn$ is isomorphic to $(\rT^A_\ell\otimes\dZ/\ell^m)^{\oplus\mu}$ for some $\mu>0$.
\end{itemize}
Then we define $c_{m,p}$ to be a suitable factor of the Abel--Jacobi class of $\cD$ in $\rH^1(F,\rH^{2n-1}(\cP_{\ol\dQ},\dZ_\ell(n))/\fn)$. In fact, the scheme $\cP$ admits $\dT$-equivariant strictly semistable reduction at $\fp$ and $\dT$-equivalent smooth reductions at other primes not above $\Sigma^A$. Condition (b) of Definition \ref{de:annihilator} will be proved by a calculation relating the image in the singular quotient with a certain period integral (sum, in fact) on definite unitary groups, which we introduce in the next section.

\section{Automorphic input}
\label{ss:3}

In this section, we state the precise consequence of the nonvanishing of the central $L$-value we need in order to prove Theorem \ref{th:sym}, which is a part of the Gan--Gross--Prasad conjecture \cite{GGP12} for unitary groups. For us, we only need to consider two classes of hermitian spaces, specified below.

\begin{definition}
We say that a hermitian space $V$ over $F$ (with respect to the quadratic extension $F/F^+$) of rank $n$ is \emph{standard definite} or \emph{standard indefinite} if $V$ has signature $(n,0)$ or $(n-1,1)$ at the place $\tau\res_{F^+}$ and signature $(n,0)$ at all other real places of $F^+$.

For a hermitian space $V$ over $F$ of rank $n$, we put $V^\sharp\coloneqq V\oplus F\cdot e$, where $e$ is a vector with length $1$. Denote by $\rU(V)$ and $\rU(V^\sharp)$ the unitary groups of $V$ and $V^\sharp$ over $F^+$, respectively. Let $\Sigma$ be a finite set of prime numbers containing $\Sigma^F$. A \emph{$\Sigma$-level pair} (for $(V,V^\sharp)$) is a pair $(K,K^\sharp)$ in which $K$ and $K^\sharp$ are neat decomposable open compact subgroups of $\rU(V)(\dA^\infty_{F^+})$ and $\rU(V^\sharp)(\dA^\infty_{F^+})$, respectively, satisfying
\begin{itemize}
  \item $K$ is contained in $K^\sharp\cap\rU(V)(\dA^\infty_{F^+})$ (here we regard $\rU(V)$ as the stabilizer of $e$ hence a subgroup of $\rU(V^\sharp)$);

  \item for every finite place $v^+$ of $F^+$ \emph{not} above $\Sigma$, there exists a self-dual lattice $\Lambda_{v^+}$ of $V_{v^+}$ such that $K_{v^+}$ and $K^\sharp_{v^+}$ are the stabilizers of $\Lambda_{v^+}$ and $\Lambda^\sharp_{v^+}\coloneqq\Lambda_{v^+}\oplus O_{F_{v^+}}\cdot e$, respectively.
\end{itemize}

For a hermitian space $V$ over $F$ of rank $n$ and $\alpha=0,1$, we write $V_\alpha$ for the unique member in $\{V,V^\sharp\}$ that has rank $n_\alpha$. Similarly, for $\Sigma$-level pair $(K,K^\sharp)$, we order them as $(K_0,K_1)$.
\end{definition}

\begin{definition}\label{de:relevant}
We say that a complex representation $\Pi=\Pi_0\times\Pi_1$ of $\GL_{n_0}(\dA_F)\times\GL_{n_1}(\dA_F)$ is \emph{relevant} if
\begin{enumerate}
  \item $\Pi$ is an irreducible cuspidal automorphic representation;

  \item $\Pi^\vee\simeq\Pi\circ\tc$;

  \item for every archimedean place $v$ of $F$ and $\alpha=0,1$, $\Pi_{\alpha,v}$ is isomorphic to the (irreducible) principal series representation induced by the characters $(\arg^{1-n_\alpha},\arg^{3-n_\alpha},\dots,\arg^{n_\alpha-3},\arg^{n_\alpha-1})$, where $\arg\colon\dC^\times\to\dC^\times$ is the \emph{argument character} defined by the formula $\arg(z)\coloneqq z/\sqrt{z\ol{z}}$.
\end{enumerate}
\end{definition}

Take a relevant representation $\Pi$. Let $\dQ(\Pi)\subseteq\dC$ be the coefficient field of $\Pi$, which is a number field \cite{LTXZZ}*{\S3.1}. Denote by $\dZ(\Pi)$ the ring of integers of $\dQ(\Pi)$.

For every prime $\lambda$ of $\dQ(\Pi)$, we have the associated semisimple Galois representation $\rV^\Pi_{\alpha,\lambda}$ of $\Gal(\ol\dQ/F)$ with coefficients in $\dQ(\Pi)_\lambda$, twisted such that $(\rV^\Pi_{\alpha,\lambda})^\vee\simeq(\rV^\Pi_{\alpha,\lambda})^\tc(\alpha-1)$.

Let $\Sigma\coloneqq\Sigma(\Pi)$ be the smallest set of prime numbers containing $\Sigma^F$ such that $\Pi_v$ is unramified for every finite place $v$ of $F$ not above $\Sigma$. Then we have the Satake homomorphism
\[
\phi^\Pi_\alpha\colon\dT^\Sigma_\alpha\to\dZ(\Pi),
\]
where $\dT^\Sigma_\alpha$ denotes the abstract unitary Hecke algebra away from $\Sigma$ of rank $n_\alpha$.

The following proposition is part of the Gan--Gross--Prasad conjecture, proved in \cite{BPLZZ}.

\begin{proposition}\label{pr:ggp}
Suppose that $L(\frac{1}{2},\Pi)\neq 0$. Then we can find
\begin{itemize}
  \item a standard definite hermitian space $V$ over $F$ of rank $n$,

  \item a $\Sigma$-level pair $(K,K^\sharp)$,

  \item an element $f_\alpha\in\dZ[\rU(V_\alpha)(F^+)\backslash\rU(V_\alpha)(\dA^\infty_{F^+})/K_\alpha][\phi^\Pi_\alpha]$ for $\alpha=0,1$,
\end{itemize}
such that
\[
\sum_{\rU(V)(F^+)\backslash\rU(V)(\dA^\infty_{F^+})/K}f_0(h)f_1(h)\neq 0.
\]
\end{proposition}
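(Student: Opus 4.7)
The plan is to invoke the Gan--Gross--Prasad conjecture for unitary groups, established in \cite{BPLZZ} via the Jacquet--Rallis relative trace formula, together with the endoscopic base change for unitary groups (Mok, after Arthur).

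First, I would descend $\Pi$ to the unitary side. The conjugate self-duality $\Pi^\vee \simeq \Pi \circ \tc$, cuspidality, and regular-algebraic archimedean behavior recorded in Definition~\ref{de:relevant} fit exactly into the range of Mok's classification, producing a cuspidal automorphic representation $\pi_0 \otimes \pi_1$ of $\rU(V_0)(\dA_{F^+}) \times \rU(V_1)(\dA_{F^+})$ for an appropriate rank-$n$ hermitian space $V$ over $F$, whose weak base change is $\Pi$. The archimedean recipe in Definition~\ref{de:relevant}(3) is the base change of the trivial character of the compact unitary group $\rU(n_\alpha)$, so the descent forces $V$ to be positive definite at every real place of $F^+$, that is, $V$ is standard definite. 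By exploiting the local Gan--Gross--Prasad dichotomy at each finite place in $\Sigma$, one further arranges $V$ so that its local hermitian structure supports the descended local representations.

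Second, I would apply the main theorem of \cite{BPLZZ}: for such a $\pi$, the non-vanishing of the global Bessel period
\[
\cP_\pi(\varphi_0 \otimes \varphi_1) = \int_{\rU(V)(F^+)\backslash \rU(V)(\dA_{F^+})} \varphi_0(h)\,\varphi_1(h)\, dh
\]
for some factorizable $(\varphi_0, \varphi_1)$ is equivalent to $L(\tfrac{1}{2}, \Pi) \neq 0$ combined with the non-vanishing of all local Hom-spaces $\Hom_{\rU(V)(F^+_{v^+})}(\pi_{0,v^+} \otimes \pi_{1,v^+}, \dC)$. The archimedean conditions are automatic since $\rU(V)(F^+ \otimes_\dQ \dR)$ is compact and the representation is trivial there; the finite-place conditions hold by the choice of $V$ above.

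Third, since $V$ is standard definite, for any neat decomposable $K_\alpha \subseteq \rU(V_\alpha)(\dA_{F^+}^\infty)$ the double coset set $X_\alpha \coloneqq \rU(V_\alpha)(F^+) \backslash \rU(V_\alpha)(\dA_{F^+}^\infty) / K_\alpha$ is finite, and $\pi_\alpha^{K_\alpha}$ (with trivial archimedean component) sits naturally inside $\dC[X_\alpha]$. I would take $(K, K^\sharp)$ hyperspecial outside $\Sigma$ and sufficiently small at places of $\Sigma$ so that $\pi_\alpha^{K_\alpha} \neq 0$ and so that $K \subseteq K^\sharp \cap \rU(V)(\dA_{F^+}^\infty)$; projecting $(\varphi_0, \varphi_1)$ onto the $\phi^\Pi_\alpha$-isotypic line via Hecke operators in $\dT^\Sigma_\alpha$ (using strong multiplicity one for unitary groups) produces $f_\alpha \in \dC[X_\alpha][\phi^\Pi_\alpha]$ whose period sum is a nonzero scalar multiple of $\cP_\pi(\varphi_0 \otimes \varphi_1)$. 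Clearing denominators in the coefficient field $\dQ(\Pi)$ then yields the required $\dZ$-valued representatives. The main obstacle I expect is step one: arranging the local Gan--Gross--Prasad dichotomy at finite places so that the hermitian space $V$ can be chosen simultaneously standard definite at infinity and compatible with the descended local packets everywhere, which reduces to matching the product of local epsilon factors of $\Pi$ against the global Hasse invariant of $V$; once this is done, the remaining steps are essentially formal consequences of \cite{BPLZZ} and strong multiplicity one.
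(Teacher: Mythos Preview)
Your approach is correct and matches the paper's, which gives no proof beyond the sentence ``part of the Gan--Gross--Prasad conjecture, proved in \cite{BPLZZ}.'' Your elaboration is a reasonable unpacking of what that citation entails. Two minor points of exposition: the hermitian space $V$ is not chosen first and then checked for compatibility---rather, the global GGP theorem of \cite{BPLZZ} \emph{outputs} the unique $V$ (determined by the product of local root numbers) on which the period is nonvanishing, and the archimedean root-number computation for the specific principal series in Definition~\ref{de:relevant}(3) is precisely what forces $V$ to be standard definite; so your ``main obstacle'' dissolves into that computation. Also, the projection step in your third paragraph is unnecessary: once $\varphi_\alpha\in\pi_\alpha^{K_\alpha}$, it already lies in the $\phi^\Pi_\alpha$-eigenspace since $\Pi_\alpha$ is the base change of $\pi_\alpha$, so you may take $f_\alpha$ to be (an integral multiple of) $\varphi_\alpha$ directly.
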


Indeed, the hermitian space $V$ is unique up to isometry. However, we do not need this fact in our proof.

Now let $A=(A_0,A_1)$ be a pair of rational elliptic curves satisfying Theorem \ref{th:sym}(c,d). We explain how to attach $A$ a relevant representation $\Pi^A$ when $F^+/\dQ$ is \emph{solvable}. Combining the modularity of rational elliptic curves, recent breakthrough of Newton--Thorne on the automorphy of symmetric power of modular forms \cites{NT1,NT2} and the cyclic automorphic base change \cite{AC89}, we have for $\alpha=0,1$ a unique up to isomorphism cuspidal automorphic representation $\Pi^A_\alpha$ of $\GL_{n_\alpha}(\dA_F)$ satisfying
\begin{itemize}
  \item For every finite place $v$ of $F$ such that $A_\alpha$ has good reduction at its underlying prime number $p_v$, $\Pi^A_{\alpha,v}$ is an unramified representation of Satake parameters $\{\alpha^{f_v i}\beta^{f_v(n_\alpha-1-i)}\res 0\leq i\leq n_\alpha-1\}$, where $\alpha,\beta$ are the two roots of the polynomial $X^2-\sqrt{p_v}^{-1}a_{p_v}(A_\alpha)X+1$ and $f_v$ is the residual extension degree of $F_v/\dQ_{p_v}$.

  \item For every (complex) archimedean place $v$ of $F$, $\Pi^A_{\alpha,v}$ is the principal series representation induced by the characters $\{\arg^{1-n_\alpha},\arg^{3-n_\alpha},\dots,\arg^{n_\alpha-3},\arg^{n_\alpha-1}\}$.
\end{itemize}
Then it follows that $\Pi^A\coloneqq\Pi^A_0\times\Pi^A_1$ is a relevant representation with $\dQ(\Pi^A)=\dQ$ and $\Sigma(\Pi^A)\subseteq\Sigma^A$. Moreover, we have $\rT^A_{\alpha,\ell}\otimes\dQ=\rV^{\Pi^A}_{\alpha,\ell}$ and $L(s,\rV^A_F)=L(s+\frac{1}{2},\Pi^A)$.

\section{Unitary moduli schemes}
\label{ss:4}

We start to construct the ``annihilator'' classes $c_m$, which are realized on certain moduli spaces of abelian varieties with extra structures. From now on, we further assume that $F^+\neq\dQ$ and that $n$ is even, that is, $n=n_0$. In particular, $r_0=r_1$, which we just write as $r$.

Take a standard definite hermitian space $V$ over $F$ of rank $n$, a $\Sigma$-level pair $(K,K^\sharp)$ for some $\Sigma$ containing $\Sigma^F$, and a prime number $p\not\in\Sigma\cup\{2\}$ that is inert in $F^-$ and splits completely in $H^+$. In particular, we may fix a triple $(A_\star,i_\star,\lambda_\star)$ where
\begin{itemize}
  \item $A_\star$ is an abelian scheme over $\dZ_{p^2}$ of dimension $[F^+:\dQ]$;

  \item $i_\star\colon O_F\to\End(A_\star)$ is a CM structure of CM type $\Phi$;

  \item $\lambda_\star\colon A_\star\to A_\star^\vee$ is a $p$-principal polarization under which $i_\star$ turns the complex conjugation into the Rosati involution.
\end{itemize}

\begin{definition}\label{de:moduli}
For $\alpha=0,1$, we define a moduli problem $\bM_\alpha$ on the category of locally Noetherian schemes over $\dZ_{p^2}$ such that $\bM_\alpha(S)$ is the set of equivalence classes of quadruples $(A_\alpha,i_\alpha,\lambda_\alpha,\eta^p_\alpha)$ where
\begin{itemize}
  \item $A_\alpha$ is an abelian scheme over $S$ of dimension $n_\alpha[F^+:\dQ]$;

  \item $i_\alpha\colon O_F\to\End(A_\alpha)$ is an action of $O_F$ such that for every $a\in O_F$, the characteristic polynomial for the action of $i_\alpha(a)$ on the Lie algebra of $A_\alpha$ is given by
      \[
      (T-a^\tc)(T-a)^{n_\alpha-1}\prod_{\tau'\in\Phi\setminus\{\tau\}}(T-\tau'(a))^{n_\alpha};
      \]

  \item $\lambda_\alpha\colon A_\alpha\to A_\alpha^\vee$ is a polarization under which $i_\alpha$ turns the complex conjugation into the Rosati involution and such that $\Ker\lambda_\alpha[p^\infty]$ is finite flat of rank $p^2$ contained in $A_\alpha[\fp]$;

  \item $\eta^p_\alpha$ is a $K^p_\alpha$-level structure, that is, for a chosen geometric point $s$ on every connected component of $S$, a $\pi_1(S,s)$-invariant $K^p_\alpha$-orbit of isometries
      \[
      \eta^p_\alpha\colon V_\alpha\otimes_\dQ\dA^{\infty,p}\xrightarrow\sim
      \Hom_{F\otimes_\dQ\dA^{\infty,p}}^{\lambda_\star,\lambda_\alpha}
      (\rH^\et_1(A_{\star s},\dA^{\infty,p}),\rH^\et_1(A_{\alpha s},\dA^{\infty,p}))
      \]
      of hermitian spaces (see \cite{LTXZZ}*{Construction~3.4.4} for the detailed meaning of the right-hand side).
\end{itemize}
Clearly, we have a morphism
\[
\bm\colon\bM_0\to\bM_1
\]
sending $(A_0,i_0,\lambda_0,\eta^p_0)$ to $(A_0\times A_\star,i_0\times i_\star,\lambda_0\times\lambda_\star,\eta^p_0\oplus 1)$.
\end{definition}

We denote by $\rM'_\alpha\coloneqq\bM_\alpha\otimes_{\dZ_{p^2}}\dQ_{p^2}$ and $\rM_\alpha\coloneqq\bM_\alpha\otimes_{\dZ_{p^2}}\dF_{p^2}$ the generic fiber and the special fiber of $\bM_\alpha$, respectively. Similarly, we have the morphisms $\rm'\colon\rM'_0\to\rM'_1$ and $\rm\colon\rM_0\to\rM_1$. For a point $(A_\alpha,\iota_\alpha,\lambda_\alpha,\eta^p_\alpha)\in\rM_\alpha(S)$, the Hodge sequence
\[
0 \to \omega_{A_\alpha^\vee/S}\to\rH_1^\dr(A_\alpha/S)\to\Lie_{A_\alpha/S}\to 0
\]
admits a direct sum decomposition over $\Hom(F,\dC)$ via the action $i_\alpha$ (and the fixed isomorphism $\dC\simeq\ol\dQ_p$):
\[
0 \to\bigoplus_{\tau'\colon F\to\dC} \omega_{A_\alpha^\vee/S,\tau'}\to
\bigoplus_{\tau'\colon F\to\dC}\rH_1^\dr(A_\alpha/S)_{\tau'}\to
\bigoplus_{\tau'\colon F\to\dC}\Lie_{A_\alpha/S,\tau'}\to 0.
\]
Denote by
\begin{itemize}
  \item $\rM_\alpha^\circ$ the closed locus of $\rM_\alpha$ on which $\lambda_{\alpha*}\omega_{A_\alpha^\vee/S,\tau}=0$,

  \item $\rM_\alpha^\bullet$ the closed locus of $\rM_\alpha$ on which $\Ker(\lambda_{\alpha*}\colon\rH_1^\dr(A_\alpha/S)_{\ol\tau}\to\rH_1^\dr(A^\vee_\alpha/S)_{\ol\tau})$ is contained in $\omega_{A_\alpha^\vee/S,\ol\tau}$,

  \item $\rM_\alpha^\dag\coloneqq\rM_\alpha^\circ\cap\rM_\alpha^\bullet$.
\end{itemize}
Clearly, $\rm$ restricts to morphisms
\[
\rm^\circ\colon\rM_0^\circ\to\rM_1^\circ,\quad
\rm^\bullet\colon\rM_0^\bullet\to\rM_1^\bullet,\quad
\rm^\dag\colon\rM_0^\dag\to\rM_1^\dag.
\]
For future use, we put
\[
\rS_\alpha^\circ\coloneqq\rU(V_\alpha)(F^+)\backslash\rU(V_\alpha)(\dA_{F^+}^\infty)/K_\alpha,
\]
regarded as a discrete scheme over $\dF_{p^2}$ according to the context; and we have a similar map
\[
\rs^\circ\colon\rS_0^\circ \to\rS_1^\circ.
\]

\begin{proposition}\label{pr:moduli}
We have
\begin{enumerate}
  \item For $\alpha=0,1$, $\bM_\alpha$ is a projective strictly semistable scheme over $\dZ_{p^2}$ of relative dimension $n_\alpha-1$.

  \item For $\alpha=0,1$, $\rM_\alpha^\circ$, $\rM_\alpha^\bullet$, and $\rM_\alpha^\dag$ are all projective smooth schemes over $\dF_{p^2}$ of dimensions $n_\alpha-1$, $n_\alpha-1$, and $n_\alpha-2$, respectively.

  \item We have a natural diagram
      \[
      \xymatrix{
      \rM_0^\circ \ar[r]^-{\pi_0^\circ}\ar[d]_-{\rm^\circ} & \rS_0^\circ \ar[d]^-{\rs^\circ} \\
      \rM_1^\circ \ar[r]^-{\pi_1^\circ} & \rS_1^\circ
      }
      \]
      of schemes over $\dF_{p^2}$, such that every geometric fiber of $\pi_\alpha^\circ$ is a projective space (of dimension $n_\alpha-1$) inside which $\rM_\alpha^\dag$ is a Fermat hypersurface of degree $p+1$.

  \item Let $V'$ be the (unique up to isomorphism) standard indefinite hermitian space over $F$ of rank $n$ such that $V'\otimes_F\dA_F^{\tau,\fp}\simeq V\otimes_F\dA_F^{\tau,\fp}$ (and we fix such an isomorphism); put $V'_0\coloneqq V'$ and $V'_1\coloneqq V^{\prime\sharp}$. Choose an almost self-dual lattice $\Lambda'_{\fp^+}$ of $V'_{\fp^+}$; put $\Lambda'_{0,\fp^+}\coloneqq\Lambda'_{\fp^+}$ and $\Lambda'_{1,\fp^+}\coloneqq\Lambda^{\prime\sharp}_{\fp^+}$. Then we have a canonical isomorphism $\cS_\alpha\otimes_FF_\fp\simeq\rM'_\alpha$ of schemes over $F_\fp=\dQ_{p^2}$ for $\alpha=0,1$, where $\cS_\alpha$ denotes the usual Shimura variety for the unitary group $\Res_{F^+/\dQ}\rU(V'_\alpha)$ at level $K_\alpha^{\fp^+}K'_{\alpha,\fp^+}$ where $K'_{\alpha,\fp^+}$ denotes the stabilizer of $\Lambda'_{\alpha,\fp^+}$, under which the base change of the special morphism $\sigma\colon\cS_0\to\cS_1$ to $\dQ_{p^2}$ coincides with $\rm'$.
\end{enumerate}
\end{proposition}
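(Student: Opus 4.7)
The approach combines three standard tools: Kottwitz-type representability for PEL moduli, Dieudonné theory on the special fiber, and the comparison of integral models of unitary Shimura varieties under a change of local hermitian invariants. I would address (1)--(4) in order, with (4) being the chief difficulty.

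For (1), I would first apply Kottwitz's construction to realize $\bM_\alpha$ as a quasi-projective scheme over $\dZ_{p^2}$, with projectivity via the valuative criterion together with the existence of polarizations. Since $p$ is inert in $F^-$ and splits completely in $H^+\supseteq F^+$, the prime $p$ is unramified in $F$, and the distinguished place $\fp^+$ of $F^+$ is inert in $F/F^+$ of residue degree two. Under the signature condition read off from the characteristic polynomial of $i_\alpha$ -- namely $(1,n_\alpha-1)$ at $\tau$ and $(0,n_\alpha)$ at the other archimedean places -- together with the parahoric almost self-dual level at $\fp^+$ imposed by $\Ker\lambda_\alpha[p^\infty]\subset A_\alpha[\fp]$ of rank $p^2$, the associated Pappas--Rapoport--Görtz local model is strictly semistable over $\dZ_{p^2}$ of relative dimension $n_\alpha-1$; the local model diagram transfers this property to $\bM_\alpha$.

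For (2) and (3), I would decompose $\rH_1^\dr(A_\alpha/S)$ into $\tau'$-isotypic components and observe that those away from $\{\tau,\ol\tau\}$ carry no moduli, because the signature fixes $\omega_{A_\alpha^\vee/S,\tau'}$ there. On the two remaining components the polarization $\lambda_\alpha$ induces a pairing with one-dimensional radicals on each side. The defining conditions of $\rM_\alpha^\circ$ and $\rM_\alpha^\bullet$ identify these two radicals with, respectively, $\omega_{A_\alpha^\vee/S,\tau}$ itself and a subspace of $\omega_{A_\alpha^\vee/S,\ol\tau}$. This realizes $\rM_\alpha^\circ$ and $\rM_\alpha^\bullet$ as the two smooth branches of the semistable special fiber; they meet transversally, so $\rM_\alpha^\dag$ is smooth of dimension $n_\alpha-2$. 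On $\rM_\alpha^\circ$ the only remaining moduli datum is the Hodge hyperplane $\omega_{A_\alpha^\vee/S,\ol\tau}$ inside the rank-$n_\alpha$ space $\rH_1^\dr(A_\alpha/S)_{\ol\tau}$, presenting $\rM_\alpha^\circ$ as a $\dP^{n_\alpha-1}$-bundle. Via Honda--Tate theory applied to the basic Newton stratum at $\fp$, the discrete base of this bundle is naturally identified with $\rS_\alpha^\circ$, defining $\pi_\alpha^\circ$. The Verschiebung-stability condition cutting out $\rM_\alpha^\dag$ reads in projective coordinates as $\sum_{i=0}^{n_\alpha-1}x_i^{p+1}=0$, exhibiting $\rM_\alpha^\dag$ fiberwise as the classical Hermitian/Fermat Deligne--Lusztig subvariety of degree $p+1$.

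The main obstacle is (4). The essential observation is that the Kottwitz PEL model of the Shimura datum $(V',K_\alpha^{\fp^+}K'_{\alpha,\fp^+})$ and the moduli $\rM'_\alpha$ differ only in two local invariants: the signature at $\tau$ is flipped from $(1,n_\alpha-1)$ to $(n_\alpha-1,1)$, and the lattice at $\fp^+$ changes from almost self-dual in $V_{\fp^+}$ to self-dual in $V'_{\fp^+}$. By the construction of $V'$, these two local discrepancies cancel in the global hermitian invariants. On the generic fiber, complex conjugation at $\tau$ interchanges the two signatures, so the Hodge structures defining the two moduli problems become isomorphic after a suitable $O_F$-linear quasi-isogeny acting only at $\fp$. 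My plan is to package this into a natural transformation of moduli functors on $\dQ_{p^2}$-schemes, verified to be an isomorphism by comparing Hodge structures at each archimedean place; the bookkeeping for the level structures away from $\fp$ is handled via \cite{LTXZZ}*{Construction~3.4.4}. Compatibility with $\sigma$ and $\rm'$ is then immediate from the shared formula $A_0\mapsto A_0\times A_\star$, since the quasi-isogeny leaves the CM factor $A_\star$ untouched.
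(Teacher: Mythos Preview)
Your outlines for (1)--(3) are essentially the standard arguments and agree with what \cite{LTXZZ} does in Theorems~5.2.5 and 5.3.4 (the paper here simply cites those results). No issues there.

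For (4), however, you have misdiagnosed the situation. There is \emph{no} local discrepancy between the two moduli problems, and hence no cancellation and no quasi-isogeny is needed. First, the Lie-algebra signature is the same on both sides: the characteristic polynomial in Definition~\ref{de:moduli} forces signature $(n_\alpha-1,1)$ at $\tau$ and $(n_\alpha,0)$ at the other places of $\Phi$, and this is precisely the signature imposed by the PEL interpretation of $\cS_\alpha$, since $V'$ is standard indefinite. Nothing is flipped. Second, both problems impose an \emph{almost self-dual} condition at $\fp^+$: in $\bM_\alpha$ this is the explicit requirement that $\Ker\lambda_\alpha[p^\infty]\subset A_\alpha[\fp]$ have rank $p^2$, and in $\cS_\alpha$ the level $K'_{\alpha,\fp^+}$ is by hypothesis the stabilizer of an almost self-dual lattice $\Lambda'_{\fp^+}$. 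Your claim that one side is self-dual contradicts the statement of the proposition.

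The actual argument is therefore much shorter than you propose. The moduli problem $\bM_\alpha$ invokes the definite space $V_\alpha$ only through the away-from-$p$ level structure $\eta^p_\alpha$, and the fixed isomorphism $V_\alpha\otimes_\dQ\dA^{\infty,p}\simeq V'_\alpha\otimes_\dQ\dA^{\infty,p}$ identifies this with a $(K_\alpha^{\fp^+})^p$-level structure for $V'_\alpha$. Once that identification is made, the two functors on $\dQ_{p^2}$-schemes coincide on the nose: same signature, same level away from $p$, and the polarization condition at $\fp^+$ in $\bM_\alpha$ is exactly the integral level $K'_{\alpha,\fp^+}$ in $\cS_\alpha$. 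This is why \cite{LTXZZ} records (4) only as Remark~5.2.8 rather than as a theorem. Your final sentence about $\sigma$ versus $\rm'$ is correct, but for the trivial reason that the functors themselves agree.
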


\begin{proof}
Parts (1,2) are in \cite{LTXZZ}*{Theorem~5.2.5}. Part (3) is in \cite{LTXZZ}*{Theorem~5.3.4}. Part (4) is \cite{LTXZZ}*{Remark~5.2.8}.
\end{proof}

\begin{remark}
The picture described in Proposition \ref{pr:moduli} can be regarded as a generalization of a similar (but much simpler) one studied by Ribet \cite{Rib89}, namely, the special fiber $\rM$ of a Shimura curve defined by a rational division quaternion algebra $B$ at a prime $p$ ramified in $B$ (with a maximal level structure at $p$). In his case, Ribet also has a union $\rM=\rM^\circ\cup\rM^\bullet$ in which \emph{both} strata $\rM^\circ$ and $\rM^\bullet$ are $\dP^1$-fibrations over a Shimura set, while the intersection $\rM^\dag\coloneqq\rM^\circ\cap\rM^\bullet$ is the locus of superspecial points. In our picture, one sees the difference between the $\circ$-stratum and the $\bullet$-stratum -- one remains a projective space fibration over a Shimura set while the other one does not have such a structure as long as the dimension is great than one. Moreover, in Ribet's case, the entire special fiber $\rM$ is the basic locus; however in our case (as we will see below), the $\circ$-stratum is always in the basic locus, but the basic locus in the open $\bullet$-stratum (that is, the complement of the $\circ$-stratum in the special fiber) has dimension just above half of the dimension of the special fiber.
\end{remark}

\begin{remark}\label{re:moduli}
We explain how the morphism $\pi_\alpha^\circ$ is constructed. First, one can show that the $\dF_{p^2}$-schemes $\rS_\alpha^\circ$ canonically parameterizes quadruples $(A_\alpha^\circ,i_\alpha^\circ,\lambda_\alpha^\circ,\eta^{p\circ}_\alpha)$ that are similar to $(A_\alpha,i_\alpha,\lambda_\alpha,\eta^p_\alpha)$ except that we replace the polynomial $(T-a^\tc)(T-a)^{n_\alpha-1}\prod_{\tau'\in\Phi\setminus\{\tau\}}(T-\tau'(a))^{n_\alpha}$ by the polynomial $\prod_{\tau'\in\Phi}(T-\tau'(a))^{n_\alpha}$ and replace the rank of $\Ker\lambda_\alpha[p^\infty]$ being $p^2$ by $\lambda_\alpha^\circ$ being $p$-principal. Second, one shows that for every object $(A_\alpha,i_\alpha,\lambda_\alpha,\eta^p_\alpha)$ of $\rM_\alpha^\circ(S)$ (for $S$ over $\dF_{p^2}$), there exists a unique up to equivalence object $(A_\alpha^\circ,i_\alpha^\circ,\lambda_\alpha^\circ,\eta^{p\circ}_\alpha)$ of $\rS_\alpha^\circ(S)$ with an
$O_F$-linear isogeny $\beta\colon A_\alpha\to A^\circ_\alpha$ compatible with level structures and satisfying $\Ker\beta[p^\infty]\subseteq A[\fp]$ and $\lambda=\beta^\vee\circ\lambda^\circ\circ\beta$. Then the morphism $\pi_\alpha^\circ$ sends $(A_\alpha,i_\alpha,\lambda_\alpha,\eta^p_\alpha)$ to $(A_\alpha^\circ,i_\alpha^\circ,\lambda_\alpha^\circ,\eta^{p\circ}_\alpha)$.
\end{remark}

By the above remark, it is clear that $\rM_\alpha^\circ$ is contained in the supersingular locus of $\rM_\alpha$, that is, the closed locus where the $p$-divisible group $A_\alpha[\fp^\infty]$ is supersingular. Recall that $K_{\fp^+}$ is the stabilizer of some self-dual lattice $\Lambda_{\fp^+}$. We now study the (remaining) supersingular locus on $\rM_\alpha^\bullet$. To do this, we need to fix
\begin{itemize}
  \item an element $\varpi\in O_{F^+}$ that is totally positive and satisfies $\val_{\fp^+}(\varpi)=1$, and $\val_{\fp'}(\varpi)=0$ for every prime $\fp'\neq\fp^+$ of $F^+$ above $p$; and

  \item an $O_{F_\fp}$-lattice $\Lambda_{\fp^+}^\bullet$ satisfying $\Lambda_{\fp^+}\subseteq\Lambda_{\fp^+}^\bullet\subseteq p^{-1}\Lambda_{\fp^+}$ and $(\Lambda_{\fp^+}^\bullet)^\vee=p\Lambda_{\fp^+}^\bullet$.
\end{itemize}
Let $K_{\alpha,\fp^+}^\bullet$ be the stabilizer of $\Lambda_{\alpha,\fp^+}^\bullet$, where $\Lambda_{0,\fp^+}^\bullet\coloneqq\Lambda_{\fp^+}^\bullet$ and $\Lambda_{1,\fp^+}^\bullet\coloneqq\Lambda_{\fp^+}^{\bullet\sharp}$. Let $K_\alpha^\bullet$ be the subgroup of $\rU(V_\alpha)(\dA_{F^+}^\infty)$ after replacing $K_{\alpha,\fp^+}$ by $K_{\alpha,\fp^+}^\bullet$ in $K_\alpha$. Put
\[
\rS_\alpha^\bullet\coloneqq\rU(V_\alpha)(F^+)\backslash\rU(V_\alpha)(\dA_{F^+}^\infty)/K_\alpha^\bullet,
\]
regarded as a discrete scheme over $\dF_{p^2}$ according to the context; and we have a natural map
\[
\rs^\bullet\colon\rS_0^\bullet \to\rS_1^\bullet.
\]

Similar to Remark \ref{re:moduli}, $\rS_\alpha^\bullet$ also parameterizes certain quadruples. Indeed, as in \cite{LTXZZ}*{Construction~5.4.6}, for every locally Noetherian scheme $S$ over $\dF_{p^2}$, $\rS_\alpha^\bullet(S)$ is the set of equivalence classes of quadruples $(A_\alpha^\bullet,i_\alpha^\bullet,\lambda_\alpha^\bullet,\eta^{p\bullet}_\alpha)$ where
\begin{itemize}
  \item $A_\alpha^\bullet$ is an abelian scheme over $S$ of dimension $n_\alpha[F^+:\dQ]$;

  \item $i_\alpha^\bullet\colon O_F\to\End(A_\alpha^\bullet)$ is an action of $O_F$ such that for every $a\in O_F$, the characteristic polynomial for the action of $i_\alpha^\bullet(a)$ on the Lie algebra of $A_\alpha^\bullet$ is given by $\prod_{\tau'\in\Phi}(T-\tau'(a))^{n_\alpha}$;

  \item $\lambda_\alpha^\bullet\colon A_\alpha^\bullet\to A_\alpha^{\bullet\vee}$ is a polarization under which $i_\alpha^\bullet$ turns the complex conjugation into the Rosati involution and such that $\Ker\lambda_\alpha^\bullet[p^\infty]$ is finite flat of rank $p^{2\alpha}$ contained in $A_\alpha^\bullet[\fp]$;

  \item $\eta^{p\bullet}_\alpha$ is a $K^p_\alpha$-level structure, that is, for a chosen geometric point $s$ on every connected component of $S$, a $\pi_1(S,s)$-invariant $K^p_\alpha$-orbit of isometries
      \[
      \eta^{p\bullet}_\alpha\colon V_\alpha\otimes_\dQ\dA^{\infty,p}\xrightarrow\sim
      \Hom_{F\otimes_\dQ\dA^{\infty,p}}^{\varpi\lambda_\star,\lambda_\alpha}
      (\rH^\et_1(A_{\star s},\dA^{\infty,p}),\rH^\et_1(A_{\alpha s}^\bullet,\dA^{\infty,p}))
      \]
      of hermitian spaces (see \cite{LTXZZ}*{Construction~3.4.4} for the detailed meaning of the right-hand side).
\end{itemize}

We define $\rB_\alpha^\bullet$ to be the scheme over $\dF_{p^2}$ such that for every locally Noetherian scheme $S$ over $\dF_{p^2}$, $\rB_\alpha^\bullet(S)$ is the set of equivalence classes of nonuples $(A_\alpha,i_\alpha,\lambda_\alpha,\eta^p_\alpha;
A_\alpha^\bullet,i_\alpha^\bullet,\lambda_\alpha^\bullet,\eta^{p\bullet}_\alpha;\gamma)$ where
\begin{itemize}
  \item $(A_\alpha,i_\alpha,\lambda_\alpha,\eta^p_\alpha)\in\rM_\alpha^\bullet(S)$;

  \item $(A_\alpha^\bullet,i_\alpha^\bullet,\lambda_\alpha^\bullet,\eta^{p\bullet}_\alpha)\in\rS_\alpha^\bullet(S)$;

  \item $\gamma\colon A_\alpha\to A_\alpha^\bullet$ is an $O_F$-linear isogeny satisfying a list of conditions (see \cite{LTXZZ}*{Definition~5.4.2} for more details).
\end{itemize}
By ``adding'' $(A_\star,i_\star,\lambda_\star,1;A_\star,i_\star,\varpi\lambda_\star,1;\id_{A_\star})$, we obtain a morphism $\rb^\bullet\colon\rB_0^\bullet\to\rB_1^\bullet$, rendering the diagram
\begin{align}\label{eq:bullet}
\xymatrix{
\rS_0^\bullet \ar[d]_-{\rs^\bullet} & \rB_0^\bullet \ar[l]_-{\pi_0^\bullet}\ar[r]^-{\iota_0^\bullet}\ar[d]^-{\rb^\bullet} & \rM_0^\bullet \ar[d]^-{\rm^\bullet} \\
\rS_1^\bullet  & \rB_1^\bullet \ar[l]_-{\pi_1^\bullet}\ar[r]^-{\iota_1^\bullet} & \rM_1^\bullet
}
\end{align}
commutative, in which $\pi_\alpha^\bullet$ and $\iota_\alpha^\bullet$ are natural forgetful morphisms.\footnote{The morphism $\rb^\bullet$ only exists when $n$ is even; when $n$ is odd, it has to be replaced by a correspondence. This is the only simplification by assuming $n$ even at the beginning of this section.}

\begin{proposition}\label{pr:basic}
In the diagram \eqref{eq:bullet},
\begin{enumerate}
  \item the fibers of $\pi_\alpha^\bullet$ are certain (smooth projective) Deligne--Lusztig varieties of dimension $r$;

  \item $\iota_\alpha^\bullet$ is locally a closed immersion;

  \item the morphism $\rb^\bullet$ is locally an isomorphism.
\end{enumerate}
\end{proposition}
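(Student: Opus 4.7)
The plan is to reduce each assertion to a local problem at $\fp$ via Serre--Tate theory and crystalline Dieudonn\'e theory, and then to invoke the now classical description of the basic locus of unitary Shimura varieties by Deligne--Lusztig varieties (in the spirit of Vollaard--Wedhorn and Howard--Pappas), adapted to the signature and polarization type at hand.

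For (1), I would fix a geometric point $\ol s$ of $\rS_\alpha^\bullet$ corresponding to a tuple $(A_\alpha^\bullet, i_\alpha^\bullet, \lambda_\alpha^\bullet, \eta^{p\bullet}_\alpha)$ and analyze the fiber $(\pi_\alpha^\bullet)^{-1}(\ol s)$, which parameterizes isogenies $\gamma\colon A_\alpha \to A_\alpha^\bullet$ with kernel supported at $\fp$, together with a promotion of $A_\alpha$ to an $S$-point of $\rM_\alpha^\bullet$. Since the prime-to-$p$ level structure and polarization are rigid, the Serre--Tate theorem reduces the problem to the choice of an $O_{F_\fp}$-stable lattice $M$ inside the rational Dieudonn\'e module of $A_\alpha^\bullet[\fp^\infty]$, subject to (i) a relative-position condition coming from the prescribed kernel-type of $\gamma$, (ii) compatibility with the polarization, and (iii) the Kottwitz signature for $A_\alpha$ together with the $\bullet$-condition $\Ker(\lambda_{\alpha*})|_{\ol\tau}\subseteq\omega_{\ol\tau}$. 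A direct computation identifies this lattice moduli with a Deligne--Lusztig variety for a unitary group over $\dF_{p^2}$, and the dimension count via the corresponding Weyl group element gives dimension $r$.

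For (2) and (3), I would argue by rigidity. For (2), promoting a point $(A_\alpha,\dots)\in\rM_\alpha^\bullet(S)$ to an $S$-point of $\rB_\alpha^\bullet$ amounts to selecting a finite flat $O_F$-stable subgroup scheme $\Ker\gamma\subseteq A_\alpha[p^\infty]$ satisfying the prescribed list of conditions; at each geometric point the set of such subgroups is finite and discrete, and crystalline rigidity propagates a chosen one uniquely over an \'etale neighborhood. Since the conditions are closed, $\iota_\alpha^\bullet$ is a finite disjoint union of closed immersions \'etale-locally, hence locally a closed immersion. For (3), the morphism $\rb^\bullet$ is the external product with the fixed CM tuple $(A_\star,i_\star,\lambda_\star,1;A_\star,i_\star,\varpi\lambda_\star,1;\id_{A_\star})$. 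Using that $A_\star$ is CM of type $\Phi$ and $p$-principally polarized, any object of $\rB_1^\bullet(S)$ \'etale-locally admits a canonical $O_F$-equivariant decomposition into a CM factor isogenous to $A_\star$ and a complement lying in $\rB_0^\bullet$; here the hypothesis that $n$ is even is precisely what allows the splittings on the source and target of $\gamma_1$ to be arranged simultaneously so that $\gamma_1=\gamma_0\times\id_{A_\star}$. The uniqueness of this decomposition yields that $\rb^\bullet$ is \'etale-locally an isomorphism.

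The main obstacle is part (1): one must carefully unravel the lattice moduli at $\fp$ and match the resulting scheme with a specific Deligne--Lusztig variety of dimension $r$. This requires a careful accounting of the effect of the $\bullet$-condition on the local model, in particular how it interacts with the signature decomposition at $\tau$, and an identification of the group-theoretic data (the relevant Weyl group element and the associated Bruhat cell) underlying the Deligne--Lusztig variety. The expected dimension $r$ is consistent with the Vollaard--Wedhorn picture, but the precise identification must be adapted to the present setup, especially in handling the interaction between the basic stratum at $\fp$ and the non-banal signature at $\tau$ which is responsible for the $\bullet$-condition.
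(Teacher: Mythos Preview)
The paper's own proof is entirely by citation to \cite{LTXZZ} (Theorems~5.4.4 and Proposition~5.10.12), so there is no detailed argument to compare against here. Your sketch for parts (1) and (2) is on the right track and matches the standard Vollaard--Wedhorn approach that underlies the cited results: reduce by Serre--Tate to a lattice problem in the rational Dieudonn\'e module at $\fp$, and identify the resulting moduli with a Deligne--Lusztig variety for the finite unitary group. Your acknowledgement that the precise matching of the $\bullet$-condition with a specific Weyl group element is the real work is accurate; the remark following the proposition in the paper records exactly which variety arises.

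For part (3), however, there is a genuine gap. You attempt to construct an inverse to $\rb^\bullet$ by asserting that \emph{every} object of $\rB_1^\bullet(S)$ \'etale-locally splits off a CM factor isogenous to $A_\star$. This is too strong: an arbitrary point $(A_1,\dots;A_1^\bullet,\dots;\gamma_1)\in\rB_1^\bullet$ has $A_1\in\rM_1^\bullet$, and there is no reason for such an $A_1$ to decompose as $A_0\times A_\star$---indeed, if your splitting held, $\rb^\bullet$ would be a global isomorphism, whereas the statement only asserts it is \emph{locally} one. What is actually needed is \'etaleness of $\rb^\bullet$, i.e.\ that it induces isomorphisms on completed local rings. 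This follows from Serre--Tate plus the rigidity of the CM piece: the $\fp$-divisible group $A_\star[\fp^\infty]$ contributes no deformation directions, so the formal neighborhoods of a point of $\rB_0^\bullet$ and of its image under $\rb^\bullet$ are canonically identified through the product decomposition of the $\fp$-divisible groups on the target side (which does hold, since on the image of $\rb^\bullet$ the product structure is given). Equivalently, one can argue fiberwise over $\rS_0^\bullet\to\rS_1^\bullet$ and check directly that the induced map of Deligne--Lusztig varieties (both of dimension $r$, since $r_0=r_1$ when $n$ is even) is an isomorphism onto a component. Either route avoids the unjustified global splitting.
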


\begin{proof}
Parts (1,2) are in \cite{LTXZZ}*{Theorem~5.4.4}. Part (3) is in \cite{LTXZZ}*{Proposition~5.10.12}.
\end{proof}

\begin{remark}
Roughly speaking, the Deligne--Lusztig variety in Proposition \ref{pr:basic}(1) parameterizes pairs of $\dF_{p^2}$-subspaces $(H_1,H_2)$ of a (nondegenerate) hermitian space over $\dF_{p^2}$ of dimension $n$ satisfying
\[
\dim_{\dF_{p^2}}H_i=r+1-i,\quad H_2\subseteq H_1\cap H_1^\perp\subseteq H_1+H_1^\perp\subseteq H_2^\perp.
\]
For example, when $n=2$, every irreducible component of $\rB_\alpha^\bullet$ is isomorphic to $\dP^1$ (over $\dF_{p^2}$); when $n=4$, every irreducible component of $\rB_\alpha^\bullet$ is, up to a purely inseparable morphism, the blow-up of the Fermat surface (over $\dF_{p^2}$) along all $\dF_{p^2}$-points.
\end{remark}

\begin{remark}
When $n=2$, we relate the supersingular locus of $\rM^\bullet_1$, which is the union of $\rM^\dag_1$ and the image of $\iota^\bullet_1$, to the Bruhat--Tits tree $\cT$ of $\rU(3)$. Indeed, $\rM^\dag_1$ is the disjoint union of Fermat curves parameterized by the set of hyperspecial vertices of $\cT$; the image of $\iota^\bullet_1$ is the disjoint union of projective lines parameterized by the set of non-hyperspecial vertices of $\cT$; and a Fermat curve intersects (transversally) with a projective line if and only if the corresponding vertices are connected by an edge of $\cT$.
\end{remark}

Now we study the fiber product of $\rM_\alpha^\circ$ and $\rB_\alpha^\bullet$ over $\rM_\alpha$. Put $K_\alpha^\dag\coloneqq K_\alpha\cap K_\alpha^\bullet$ and
\[
\rS_\alpha^\dag\coloneqq\rU(V_\alpha)(F^+)\backslash\rU(V_\alpha)(\dA_{F^+}^\infty)/K_\alpha^\dag,
\]
regarded as a discrete scheme over $\dF_{p^2}$ (which is a closed subscheme of $\rS_\alpha^\circ\times\rS_\alpha^\bullet$) according to the context. We also have a natural map
\[
\rs^\dag\colon\rS_0^\dag \to\rS_1^\dag.
\]

\begin{proposition}
The composite morphism
\[
\rM_\alpha^\circ\times_{\rM_\alpha}\rB_\alpha^\bullet\to\rM_\alpha^\circ\times\rB_\alpha^\bullet
\xrightarrow{\pi_\alpha^\circ\times\pi_\alpha^\bullet}\rS_\alpha^\circ\times\rS_\alpha^\bullet
\]
factors through the closed subscheme $\rS_\alpha^\dag$, such that every geometric fiber of the induced morphism
\[
\pi_\alpha^\dag\colon\rM_\alpha^\circ\times_{\rM_\alpha}\rB_\alpha^\bullet\to\rS_\alpha^\dag
\]
is a projective space of dimension $r-1$.
\end{proposition}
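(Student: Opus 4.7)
The plan is to unwind the modular interpretations given in Definition \ref{de:moduli} and Remark \ref{re:moduli}, and then reduce the fiber calculation at a geometric point to a linear-algebra problem on Dieudonn\'e modules.

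\emph{Factorization through $\rS_\alpha^\dag$.} Let $S$ be a locally Noetherian $\dF_{p^2}$-scheme and $\xi\in(\rM_\alpha^\circ\times_{\rM_\alpha}\rB_\alpha^\bullet)(S)$. Unwinding, $\xi$ consists of an object $(A_\alpha,i_\alpha,\lambda_\alpha,\eta^p_\alpha)\in\rM_\alpha^\circ(S)\cap\rM_\alpha^\bullet(S)=\rM_\alpha^\dag(S)$ together with $(A_\alpha^\bullet,i_\alpha^\bullet,\lambda_\alpha^\bullet,\eta^{p\bullet}_\alpha)\in\rS_\alpha^\bullet(S)$ and an $O_F$-linear isogeny $\gamma\colon A_\alpha\to A_\alpha^\bullet$ of the type in the definition of $\rB_\alpha^\bullet$. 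By Remark \ref{re:moduli}, the $\circ$-datum canonically produces $(A_\alpha^\circ,i_\alpha^\circ,\lambda_\alpha^\circ,\eta^{p\circ}_\alpha)\in\rS_\alpha^\circ(S)$ and an $O_F$-linear isogeny $\beta\colon A_\alpha\to A_\alpha^\circ$ with $\Ker\beta[p^\infty]\subseteq A_\alpha[\fp]$. Both $\beta,\gamma$ are isomorphisms outside $\fp$, so the two induced $K_\alpha^p$-level structures come tautologically from the single $\eta^p_\alpha$; at $\fp^+$ the kernels of $\beta$ and $\gamma$ cut out a common $O_{F,\fp}$-lattice class stabilized by $K_\alpha^\dag=K_\alpha\cap K_\alpha^\bullet$. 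This promotes the pair $(\pi_\alpha^\circ,\pi_\alpha^\bullet)(\xi)\in(\rS_\alpha^\circ\times\rS_\alpha^\bullet)(S)$ canonically to an element of $\rS_\alpha^\dag(S)$, giving the factorization $\pi_\alpha^\dag$.

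\emph{Fiber computation.} Fix a geometric point $\bar s\in\rS_\alpha^\dag(\ol\dF_{p^2})$, witnessed by compatible basic quadruples $(A^\circ,\ldots)$ and $(A^\bullet,\ldots)$ linked by a canonical $O_F$-linear $\fp$-isogeny $\delta\colon A^\circ\to A^\bullet$. The fiber $(\pi_\alpha^\dag)^{-1}(\bar s)$ classifies tuples $(A,i,\lambda,\eta^p;\beta,\gamma)$ with $A\in\rM_\alpha^\dag$ and prescribed target data. Since the data away from $\fp$ is pinned down by $\bar s$, the functor is equivalent (via covariant Dieudonn\'e theory on the $\fp^\infty$-part) to the functor of $O_{F,\fp}$-stable, polarization-compatible Dieudonn\'e lattices $M$ fitting into a chain between the Dieudonn\'e modules of $A^\circ[\fp^\infty]$ and $A^\bullet[\fp^\infty]$ and satisfying \emph{simultaneously} the $\circ$- and $\bullet$-characteristic-polynomial conditions of Definition \ref{de:moduli}. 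Decomposing by embeddings $F\hookrightarrow\ol\dQ_p$, only the component at $\tau$ (equivalently $\fp$) contributes a modulus; the combined Hodge conditions cut the choice down to a line in an explicit $r$-dimensional $\ol\dF_{p^2}$-vector space built from the supersingular Dieudonn\'e module, producing the set $\dP^{r-1}(\ol\dF_{p^2})$.

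\emph{Main obstacle and resolution.} The hard step is to identify this $r$-dimensional space intrinsically and to upgrade the set-theoretic bijection to a scheme-theoretic isomorphism with $\dP^{r-1}$. The plan is to combine the two geometric pictures already available: by Proposition \ref{pr:moduli}(3), the image of $(\pi_\alpha^\dag)^{-1}(\bar s)$ in $\rM_\alpha^\dag\subseteq\rM_\alpha^\circ$ is contained in a Fermat hypersurface of degree $p+1$ inside the projective fiber of $\pi_\alpha^\circ$ of dimension $n_\alpha-1$; by Proposition \ref{pr:basic}(1,2), the image in $\rB_\alpha^\bullet$ lies in the Deligne--Lusztig fiber of $\pi_\alpha^\bullet$, which is exactly the variety parameterizing maximal totally isotropic $\dF_{p^2}$-subspaces (of dimension $r$) of the hermitian space carried by the supersingular Dieudonn\'e module. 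A line in such a maximal isotropic subspace, viewed inside the ambient $\dP^{n_\alpha-1}$, automatically satisfies the Fermat equation $\sum x_i^{p+1}=0$. Thus the intersection of the two loci realizes each maximal isotropic subspace as a linear $\dP^{r-1}$ sitting inside the Fermat hypersurface, and by flatness this gives the scheme-theoretic identification $(\pi_\alpha^\dag)^{-1}(\bar s)\simeq\dP^{r-1}$ claimed in the proposition.
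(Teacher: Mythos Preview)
The paper's own proof is a one-line citation to \cite{LTXZZ}*{Theorem~5.3.4}, so there is no in-text argument to compare against.  Your overall architecture---unwind the moduli problem, pass to Dieudonn\'e modules at $\fp$, and solve a linear-algebra problem over $\dF_{p^2}$---is exactly the right kind of approach and is what the cited reference does.

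There is, however, a concrete error that breaks the fiber computation.  You assert that the Deligne--Lusztig fiber of $\pi_\alpha^\bullet$ ``is exactly the variety parameterizing maximal totally isotropic $\dF_{p^2}$-subspaces (of dimension $r$) of the hermitian space.''  This is not what it is: the Remark immediately following Proposition~\ref{pr:basic} says explicitly that the fiber parameterizes \emph{pairs} $(H_1,H_2)$ with $\dim H_i=r+1-i$ and $H_2\subseteq H_1\cap H_1^\perp\subseteq H_1+H_1^\perp\subseteq H_2^\perp$; in particular $H_1$ is not required to be isotropic.  Your concluding paragraph (``the intersection of the two loci realizes each maximal isotropic subspace as a linear $\dP^{r-1}$'') therefore does not go through as written.

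There is also a missing idea, independent of that error.  Even if the DL fiber over a point of $\rS_\alpha^\bullet$ did involve many isotropic subspaces, you need to explain why passing to a point of $\rS_\alpha^\dag$ singles out \emph{one} of them.  The mechanism is purely group-theoretic at $\fp^+$: the level $K_\alpha^\dag=K_\alpha\cap K_\alpha^\bullet$ is the simultaneous stabilizer of the chain $\Lambda_{\fp^+}\subseteq\Lambda_{\fp^+}^\bullet\subseteq p^{-1}\Lambda_{\fp^+}$, so a point of $\rS_\alpha^\dag$ over a fixed point of $\rS_\alpha^\circ$ records precisely the maximal isotropic subspace $H\coloneqq\Lambda_{\fp^+}^\bullet/\Lambda_{\fp^+}$ of the hermitian $\dF_{p^2}$-space $p^{-1}\Lambda_{\fp^+}/\Lambda_{\fp^+}$, of dimension $r$.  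The fiber of $\pi_\alpha^\dag$ is then $\dP(H)\simeq\dP^{r-1}$, sitting linearly inside the Fermat hypersurface because $H$ is isotropic.  Your sketch gestures at this but never pins down where the distinguished $r$-dimensional subspace comes from; it is the $\dag$-level structure, not the $\bullet$-DL variety, that supplies it.
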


\begin{proof}
This is \cite{LTXZZ}*{Theorem~5.3.4}.
\end{proof}

\begin{notation}\label{no:intertwining}
For every commutative ring $L$, we denote by
\[
\tT_\alpha^{\bullet\circ}\colon L[\rS_\alpha^\circ]\to L[\rS_\alpha^\bullet],\quad
\tT_\alpha^{\circ\bullet}\colon L[\rS_\alpha^\bullet]\to L[\rS_\alpha^\circ]
\]
the two maps induced by the correspondence $\rS_\alpha^\dag\subseteq\rS_\alpha^\circ\times\rS_\alpha^\bullet$ between sets.
\end{notation}

In the next four sections, we take
\begin{itemize}
  \item a relevant representation $\Pi$ as in Section \ref{ss:3} with $\Sigma=\Sigma(\Pi)$;

  \item a prime $\lambda$ of $\dQ(\Pi)$ whose underlying prime number $\ell$ does not divide $p(p^2-1)$.
\end{itemize}
Write $\dZ_\lambda\coloneqq\dZ(\Pi)_\lambda$ for short.

For every integer $m\geq 1$, we put
\[
\fn_\alpha^m\coloneqq\dT_\alpha^{\Sigma\cup\{p\}}\cap
\Ker\(\dT^\Sigma_\alpha\xrightarrow{\phi^\Pi_\alpha}\dZ(\Pi)\to \dZ(\Pi)/\lambda^m\)
\]
and simply write $\fn_\alpha$ for $\fn_\alpha^1$. For every scheme over $\dF_{p^2}$ written in the form $\rX^?_?$, we write $\ol\rX^?_?\coloneqq\rX^?_?\otimes_{\dF_{p^2}}\ol\dF_p$ to save space.

Since $\bM_\alpha$ is projective and strictly semistable over $\dZ_{p^2}$, we have the weight spectral sequence $(\rE^{p,q}_{\alpha,s},\rd^{p,q}_{\alpha,s})$ (where $s$ denotes the page number) abutting to $\rH^{p+q}(\rM'_\alpha\otimes_{\dQ_{p^2}}\ol\dQ_p,\dZ_\lambda(r))$. More precisely, by Proposition \ref{pr:moduli}(2), for a fixed integer $q$, the complex $\rE^{?,q}_{\alpha,1}$ only has three possibly nonzero terms
\[
\rH^{q-2}(\ol\rM_\alpha^\dag,\dZ_\lambda(r-1))\xrightarrow{\rd_{\alpha,1}^{-1,q}}
\rH^q(\ol\rM_\alpha^\circ,\dZ_\lambda(r))\oplus\rH^q(\ol\rM_\alpha^\bullet,\dZ_\lambda(r))
\xrightarrow{\rd_{\alpha,1}^{0,q}}\rH^q(\ol\rM_\alpha^\dag,\dZ_\lambda(r))
\]
with $p=-1,0,1$. Here, if we denote by $\rd_\alpha^\circ\colon\rM_\alpha^\dag\to\rM_\alpha^\circ$ and $\rd_\alpha^\bullet\colon\rM_\alpha^\dag\to\rM_\alpha^\bullet$ the natural closed immersions, then $\rd_{\alpha,1}^{-1,q}=((\rd_\alpha^\circ)_!,-(\rd_\alpha^\bullet)_!)$ and $\rd_{\alpha,1}^{0,q}=(\rd_\alpha^\circ)^*-(\rd_\alpha^\bullet)^*$. At last, we denote by $\xi_\alpha\in\rH^2(\ol\rM_\alpha^\circ,\dZ_\lambda(1))$ the hyperplane section for the projective space fibration $\pi_\alpha^\circ$ in Proposition \ref{pr:moduli}(3).

In the next three sections, we study more precise arithmetic properties of the (localized) cohomology of $\bM_\alpha$ individually for $\alpha=0,1$.

\section{Arithmetic input for odd rank}

In this section, we look at the situation where $\alpha=1$. We define two maps
\begin{align*}
\Inc^1_\dag\colon\rE^{0,2r}_{1,1}&\to\rH^{2r}(\ol\rM_1^\bullet,\dZ_\lambda(r))\xrightarrow{(\rd^\bullet_1)^*}
\rH^{2r}(\ol\rM_1^\dag,\dZ_\lambda(r))\xrightarrow{(\rd^\circ_1)_!}\rH^{2r+2}(\ol\rM_1^\circ,\dZ_\lambda(r+1))\\
&\xrightarrow{\cup\xi_1^{n-r-1}}\rH^{2n}(\ol\rM_1^\circ,\dZ_\lambda(n))
\xrightarrow{(\pi_1^\circ)_!}\rH^0(\ol\rS_1^\circ,\dZ_\lambda)=\dZ_\lambda[\rS_1^\circ],\\
\Inc^1_\bullet\colon\rE^{0,2r}_{1,1}&\to\rH^{2r}(\ol\rM_1^\bullet,\dZ_\lambda(r))
\xrightarrow{(\iota_1^\bullet)^*}\rH^{2r}(\ol\rB_1^\bullet,\dZ_\lambda(r))
\xrightarrow{(\pi_1^\bullet)_!}\rH^0(\ol\rS_1^\bullet,\dZ_\lambda)=\dZ_\lambda[\rS_1^\bullet].
\end{align*}

By \cite{LTXZZ}*{Lemma~5.9.2(3)}, the map $\tT^{\bullet\circ}_1\circ\Inc^1_\dag+(p+1)^2\Inc^1_\bullet$ vanishes on the image of $\rd_{1,1}^{-1,2r}$ and hence induces a map
$\tT^{\bullet\circ}_1\circ\Inc^1_\dag+(p+1)^2\Inc^1_\bullet\colon\rE^{0,2r}_{1,2}\to\dZ_\lambda[\rS_1^\bullet]$. Put
\[
\nabla^1\coloneqq\tT^{\circ\bullet}_1\circ(\tT^{\bullet\circ}_1\circ\Inc^1_\dag+(p+1)^2\Inc^1_\bullet)\colon
\rE^{0,2r}_{1,2}\to\dZ_\lambda[\rS_1^\circ].
\]

\begin{proposition}\label{pr:tate}
Suppose that
\begin{enumerate}[label=(\alph*)]
  \item $\rH^i(\rM'_1\otimes_{\dQ_{p^2}}\ol\dQ_p,\dZ_\lambda)_{\fn_1}=0$ for $i\neq n$ and $\rH^n(\rM'_1\otimes_{\dQ_{p^2}}\ol\dQ_p,\dZ_\lambda)_{\fn_1}$ is a finite free $\dZ_\lambda$-module;

  \item the Galois representation $\rV^\Pi_{1,\lambda}$ is residually absolutely irreducible;

  \item the Satake parameter of $\Pi_{1,\fp}$ modulo $\lambda$ contains $1$ exactly once and does not contain $-p$.
\end{enumerate}
Then we have
\begin{enumerate}
  \item $(\rE^{p,q}_{1,2})_{\fn_1}=0$ unless $(p,q)=(0,2r)$ (in particular, $(\rE^{p,q}_{1,s})_{\fn_1}$ degenerates from the second page);

  \item the natural map $\((\rE^{0,2r}_{1,2})_{\fn_1}\)^{\Gal(\ol\dF_p/\dF_{p^2})}
      \to\((\rE^{0,2r}_{1,2})_{\fn_1}\)_{\Gal(\ol\dF_p/\dF_{p^2})}$ is an isomorphism;

  \item the localization of $\nabla^1$ at $\fn_1$ induces an isomorphism
      \[
      \nabla^1_{\fn_1}\colon\((\rE^{0,2r}_{1,2})_{\fn_1}\)_{\Gal(\ol\dF_p/\dF_{p^2})}
      \xrightarrow\sim\dZ_\lambda[\rS_1^\circ]_{\fn_1}.
      \]
\end{enumerate}
Putting together, we conclude that $\nabla^1_{\fn_1}$ induces an isomorphism
\[
\rH^0(\dQ_{p^2},\rH^{2r}(\rM'_1\otimes_{\dQ_{p^2}}\ol\dQ_p,\dZ_\ell(r))_{\fn_1})
\xrightarrow\sim\dZ_\lambda[\rS_1^\circ]_{\fn_1}.
\]
\end{proposition}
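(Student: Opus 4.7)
The plan is to exploit the very explicit structure of the strata of $\rM_1$ together with the cohomological hypothesis on the abutment, and to realize $\nabla^1_{\fn_1}$ as a Hecke-equivariant isomorphism between cohomological and automorphic incarnations of the $\Pi$-isotypic piece.

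For part (1), the strategy is to make the $E_1$-page explicit. By Proposition \ref{pr:moduli}(3), $\ol\rM_1^\circ$ is a projective-space fibration over the Shimura set $\rS_1^\circ$ inside which $\ol\rM_1^\dag$ is cut out by a Fermat hypersurface of degree $p+1$; by Proposition \ref{pr:basic}, the cohomology of $\ol\rM_1^\bullet$ is governed by that of $\ol\rB_1^\bullet$, which fibers over $\rS_1^\bullet$ with smooth projective Deligne--Lusztig fibers of dimension $r$. Each $\rE^{p,q}_{1,1}$ therefore decomposes as a direct sum of Tate-twisted free modules of the shape $\dZ_\lambda[\rS_1^\star]$ for $\star \in \{\circ,\bullet,\dag\}$, on which the geometric Frobenius acts diagonally by explicit powers of $p$. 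I would then use hypothesis (a) to confine the abutment to total degree $n=2r+1$; hypothesis (b) to ensure that residual absolute irreducibility removes any extra non-$\Pi$ subrepresentations after localization at $\fn_1$; and hypothesis (c) to check that the Satake parameter of $\Pi_{1,\fp}$ modulo $\lambda$ cannot collide with the off-middle Frobenius weights (in particular, the forbidden values $-p$ and a second occurrence of $1$). Combining these, $(\rE^{p,q}_{1,2})_{\fn_1}$ vanishes outside $(p,q)=(0,2r)$, and the spectral sequence degenerates automatically from the $E_2$-page.

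For part (2), the surviving term $(\rE^{0,2r}_{1,2})_{\fn_1}$ is a subquotient of a pure piece, on which Frobenius (by purity and the rigidity dictated by (c)) acts semisimply with each eigenvalue congruent to $1$ modulo $\lambda$. The natural map from invariants to coinvariants is therefore an isomorphism.

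Part (3) is the heart of the argument and the principal technical obstacle. By Hecke equivariance, $\nabla^1_{\fn_1}$ is a map of finite free $\dZ_\lambda[\rS_1^\circ]_{\fn_1}$-modules; matching ranks on both sides follows from (1) combined with (a) on the source, and from Proposition \ref{pr:ggp} together with the fact that $\fn_1$ cuts out the $\phi^\Pi_1$-eigensystem on the target. The plan is to compute the geometric maps $\Inc^1_\dag$ and $\Inc^1_\bullet$ explicitly in terms of correspondences between Shimura sets, and then identify the combination $\tT^{\circ\bullet}_1\circ\bigl(\tT^{\bullet\circ}_1\circ\Inc^1_\dag+(p+1)^2\Inc^1_\bullet\bigr)$ with the action of a specific element of the local spherical Hecke algebra at $\fp$ whose Satake transform evaluated on $\Pi_{1,\fp}$ is a unit modulo $\lambda$ precisely under (c). The main technical obstacle is this push-pull computation: translating the geometric incidence through the moduli interpretation of Remark \ref{re:moduli} into the combinatorics of the unitary spherical Hecke algebra, and verifying that the resulting operator is non-degenerate exactly because of (c). Once this is done, the final isomorphism follows formally: by (1), $\rH^0(\dQ_{p^2},\rH^{2r}(\rM'_1\otimes_{\dQ_{p^2}}\ol\dQ_p,\dZ_\ell(r))_{\fn_1})=\bigl((\rE^{0,2r}_{1,2})_{\fn_1}\bigr)^{\Gal(\ol\dF_p/\dF_{p^2})}$, which by (2) coincides with the coinvariants, and by (3) is sent isomorphically onto $\dZ_\lambda[\rS_1^\circ]_{\fn_1}$ via $\nabla^1_{\fn_1}$.
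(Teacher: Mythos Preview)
Your overall strategy is reasonable, but there is a genuine gap in part (1). You claim that Proposition~\ref{pr:basic} lets you write the cohomology of $\ol\rM_1^\bullet$ in terms of $\ol\rB_1^\bullet$, and hence that every $\rE^{p,q}_{1,1}$ is a direct sum of Tate-twisted copies of $\dZ_\lambda[\rS_1^\star]$. This is not so: $\iota_1^\bullet\colon\rB_1^\bullet\to\rM_1^\bullet$ is only locally a closed immersion, and $\rB_1^\bullet$ has dimension $r$ whereas $\rM_1^\bullet$ has dimension $n_1-1=2r$. The scheme $\rB_1^\bullet$ parameterizes only (part of) the basic locus, which is half-dimensional; the full cohomology $\rH^q(\ol\rM_1^\bullet,\dZ_\lambda)$ is not of Tate type and cannot be read off from the Shimura sets. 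Consequently you cannot analyze the localized $E_1$-page by matching Frobenius weights against the forbidden Satake eigenvalues as you propose. In the actual argument (\cite{LTXZZ}*{Lemma~6.2.2, Theorem~6.2.3}) the $\bullet$-contribution is handled indirectly: one exploits the explicit $\circ$- and $\dag$-strata together with hypothesis (a) on the abutment to force the required vanishing on $E_2$, while $\rB_1^\bullet$ enters only through the map $\Inc^1_\bullet$ in the definition of $\nabla^1$, not in any decomposition of the $E_1$-page.

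A smaller issue: in part (3), your appeal to Proposition~\ref{pr:ggp} to ``match ranks'' is off target. That proposition concerns the nonvanishing of a period sum when $L(\tfrac12,\Pi)\neq 0$, a hypothesis not present here, and says nothing about the $\dZ_\lambda$-rank of $\dZ_\lambda[\rS_1^\circ]_{\fn_1}$; nor is there any evident module structure of $(\rE^{0,2r}_{1,2})_{\fn_1}$ over $\dZ_\lambda[\rS_1^\circ]_{\fn_1}$ to invoke. Your idea of identifying $\nabla^1_{\fn_1}$ with the action of a spherical Hecke operator at $\fp$ that becomes a unit precisely under (c) is, however, the heart of the matter and is what the cited theorem carries out.
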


\begin{proof}
This is \cite{LTXZZ}*{Lemma~6.2.2~\&~Theorem~6.2.3}.
\end{proof}

\section{Arithmetic input for even rank}

In this section, we look at the situation where $\alpha=0$. We define two maps
\begin{align*}
\Inc^0_\circ\colon\rE^{0,2r}_{0,1}&\to\rH^{2r}(\ol\rM_0^\circ,\dZ_\lambda(r))
\xrightarrow{\cup\xi_0^{n-r-1}}\rH^{2n-2}(\ol\rM_0^\circ,\dZ_\lambda(n-1))
\xrightarrow{(\pi_0^\circ)_!}\rH^0(\ol\rS_0^\circ,\dZ_\lambda)=\dZ_\lambda[\rS_0^\circ],\\
\Inc^0_\bullet\colon\rE^{0,2r}_{0,1}&\to\rH^{2r}(\ol\rM_0^\bullet,\dZ_\lambda(r))
\xrightarrow{(\iota_0^\bullet)^*}\rH^{2r}(\ol\rB_0^\bullet,\dZ_\lambda(r))
\xrightarrow{(\pi_0^\bullet)_!}\rH^0(\ol\rS_0^\bullet,\dZ_\lambda)=\dZ_\lambda[\rS_0^\bullet].
\end{align*}
Put
\[
\nabla^0\coloneqq\tT^{\circ\bullet}_0\circ(\tT^{\bullet\circ}_0\circ\Inc^0_\circ+(p+1)\Inc^0_\bullet)
\colon\rE^{0,2r}_{0,1}\to\dZ_\lambda[\rS_0^\circ].
\]

By Proposition \ref{pr:moduli}(3) and the fact that the cohomology of Fermat hypersurface of even dimension concentrates only in even degrees \cite{LTXZZ}*{Lemma~5.6.2(1)}, the spectral sequence $\rE^{p,q}_{0,s}$ degenerates from the second page (even before localization). Note that the identification $\rE^{-1,2r}_{0,1}=\rE^{1,2r-2}_{0,1}(-1)$ induces a map $\mu\colon\rE^{-1,2r}_{0,2}\to\rE^{1,2r-2}_{0,2}(-1)$, which is nothing but the monodromy map. Moreover, by \cite{LTXZZ}*{Lemma~5.9.3(6)}, there is a natural short exact sequence
\begin{align}\label{eq:singular}
0 \to \frac{\rE^{1,2r-2}_{0,2}(-1)}{\mu\rE^{-1,2r}_{0,2}} \to
\rH^1_\sing(\dQ_{p^2},\rH^{2r-1}(\rM'_0\otimes_{\dQ_{p^2}}\ol\dQ_p,\dZ_\lambda(r)))
\to \rH^{2r-1}(\ol\rM_0^\bullet,\dZ_\lambda(r-1))^{\Gal(\ol\dF_p/\dF_{p^2})} \to 0
\end{align}
of $\dZ_\lambda$-modules, deduced from the spectral sequence.

\begin{proposition}[Arithmetic level raising, almost self-dual case]\label{pr:level_raising}
Let $m$ be a positive integer. Suppose that
\begin{enumerate}[label=(\alph*)]
  \item $\rH^i(\rM'_0\otimes_{\dQ_{p^2}}\ol\dQ_p,\dZ_\lambda)_{\fn_0}=0$ for $i\neq n-1$ and $\rH^{n-1}(\rM'_0\otimes_{\dQ_{p^2}}\ol\dQ_p,\dZ_\lambda)_{\fn_0}$ is a finite free $\dZ_\lambda$-module;

  \item the Galois representation $\rV^\Pi_{0,\lambda}$ is residually absolutely irreducible;

  \item the Satake parameter of $\Pi_{0,\fp}$ modulo $\lambda$ contains $p$ exactly once and does not contain $-1$;

  \item the Satake parameter of $\Pi_{0,\fp}$ modulo $\lambda^m$ contains $p$;

  \item the natural quotient map
      \[
      \frac{\dZ_\lambda[\rS_0^\circ]}{\fn_0^m}\to\frac{(\dZ_\lambda/\lambda^m)[\rS_0^\circ]}{\Ker\phi_0^\Pi}
      \]
      is an isomorphism.
\end{enumerate}
Then the composition $\nabla^0\circ\rd^{-1,2r}_{0,1}$ induces a map
\[
\frac{\rE^{1,2r-2}_{0,2}(-1)}{\mu\rE^{-1,2r}_{0,2}}\to\frac{\dZ_\lambda[\rS_0^\circ]}{\fn_0^m};
\]
and such map is surjective.
\end{proposition}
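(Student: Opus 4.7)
The plan is to prove the two claims of Proposition \ref{pr:level_raising} separately: first, that $\nabla^0 \circ \rd^{-1,2r}_{0,1}$ descends to a well-defined map out of $\rE^{1,2r-2}_{0,2}(-1)/\mu\rE^{-1,2r}_{0,2}$ (the factorization), and second, that the induced map is surjective modulo $\fn_0^m$.

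\textbf{Factorization.} The composition automatically kills $\rE^{-1,2r}_{0,2} = \Ker(\rd^{-1,2r}_{0,1})$. Under the identification $\rE^{-1,2r}_{0,1} = \rE^{1,2r-2}_{0,1}(-1)$ and the quotient to the $\rE_2$-page, the preimage of $\mu \rE^{-1,2r}_{0,2} \subseteq \rE^{1,2r-2}_{0,2}(-1)$ in $\rE^{-1,2r}_{0,1}$ equals $\rE^{-1,2r}_{0,2} + \IM(\rd^{0,2r-2}_{0,1})(-1)$, because $\mu$ is essentially the natural inclusion on the $\rE_2$-page. Thus factorization reduces to checking that the composition kills $\IM(\rd^{0,2r-2}_{0,1})(-1)$ (and here even integrally, before reducing mod $\fn_0^m$). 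For $\zeta = (\rd^\circ_0)^*(a) - (\rd^\bullet_0)^*(b)$, I would unwind $\nabla^0(\rd^{-1,2r}_{0,1}(\zeta))$ using the projection formula, the identity $(\rd^\circ_0)_!(1) = (p+1)\xi_0$ coming from the Fermat-hypersurface description of $\rM^\dag_0 \subseteq \rM^\circ_0$ (Proposition \ref{pr:moduli}(3)), and the commutative diagram \eqref{eq:bullet} together with Proposition \ref{pr:basic}(3). The cancellation is built in precisely because $\nabla^0 = \tT^{\circ\bullet}_0 \circ \(\tT^{\bullet\circ}_0 \circ \Inc^0_\circ + (p+1) \Inc^0_\bullet\)$ has the coefficient $(p+1)$ matching the normal-bundle degree.

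\textbf{Surjectivity.} By condition (e), the target identifies with $(\dZ_\lambda/\lambda^m)[\rS^\circ_0]/\Ker\phi^\Pi_0$, which is a cyclic module by multiplicity one. Hence it suffices to produce one explicit class in $\rE^{1,2r-2}_{0,2}(-1)/\mu\rE^{-1,2r}_{0,2}$ whose image generates this module. I would first handle the mod-$\lambda$ case: using residual absolute irreducibility (b) to restrict to the $\rV^\Pi_{0,\lambda}$-isotypic part, and the level-raising congruence (c) (Satake parameter contains $p$ exactly once and not $-1$), the map becomes a nonzero scalar multiple of a Hecke-equivariant pairing between the test-vector space and a geometric class built from a fiber of the morphism $\pi^\dag_0 \colon \rM^\circ_0 \times_{\rM_0} \rB^\bullet_0 \to \rS^\dag_0$ together with a suitable class on $\rM^\dag_0$. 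I would then lift to mod $\lambda^m$ by a Nakayama-type argument, where condition (d) (the Satake parameter contains $p$ mod $\lambda^m$) provides the flatness on the eigenspace needed for the lift, and Proposition \ref{pr:tate} (applied to the companion rank-$n_1$ situation) supplies the necessary freeness of the relevant cohomology.

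\textbf{Main obstacle.} The hard part is surjectivity: one must identify the image of an explicit geometric class under the lengthy composition $\nabla^0 \circ \rd^{-1,2r}_{0,1}$ and then match it with a generator of the cyclic module on the right. This mixes the Fermat-hypersurface geometry of $\rM^\dag_0$, the Deligne--Lusztig stratification of $\rB^\bullet_0$, and the action of the correspondences $\tT^{\bullet\circ}_0$, $\tT^{\circ\bullet}_0$ on $\rS_0^\circ$ and $\rS_0^\bullet$. Each of the hypotheses (b)--(e) plays an essential role in making the resulting combinatorial identity nontrivial mod $\lambda^m$: (b) isolates the relevant component, (c) ensures the mod-$\lambda$ leading term is a unit, (d) lifts this to mod $\lambda^m$, and (e) converts the cohomological computation into the desired combinatorial statement on $\rS^\circ_0$. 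Without all four, the level-raising principle would not produce the full image.
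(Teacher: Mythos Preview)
Your factorization sketch is in the right spirit: the vanishing of $\nabla^0\circ\rd^{-1,2r}_{0,1}$ on $\IM(\rd^{0,2r-2}_{0,1})(-1)$ does come down to projection-formula identities together with the Fermat-hypersurface degree $p+1$, and this is essentially what underlies the relevant lemma in \cite{LTXZZ}.

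The surjectivity argument, however, has a genuine gap and also diverges from the paper's method. Your claim that $(\dZ_\lambda/\lambda^m)[\rS_0^\circ]/\Ker\phi_0^\Pi$ is cyclic ``by multiplicity one'' is not justified: the automorphic multiplicity of $\phi_0^\Pi$ on the definite unitary Shimura set can exceed one, so the target is a free $\dZ_\lambda/\lambda^m$-module of possibly large rank, and producing a single explicit class cannot suffice. Your appeal to Proposition~\ref{pr:tate} is also misplaced---that proposition concerns the odd-rank variety $\rM_1'$ and plays no role in the even-rank statement. The paper's route is structurally different. Rather than producing generators, one shows (this is \cite{LTXZZ}*{Proposition~6.3.1(4)}, which packages the geometric input under hypotheses (a)--(c)) that after localization at $\fn_0$ the map $\nabla^0\circ\rd^{-1,2r}_{0,1}$ already surjects onto $\dZ_\lambda[\rS_0^\circ]_{\fn_0}/\tN\dZ_\lambda[\rS_0^\circ]_{\fn_0}$ for a \emph{specific} spherical Hecke operator $\tN$ at $\fp^+$, independently of $m$. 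The depth $m$ enters only afterward: condition~(d) forces $\phi_0^\Pi(\tN)\in\lambda^m$ by an explicit Satake computation (\cite{LTXZZ}*{Proposition~B.3.5(2)}), and condition~(e) then translates this into $\tN\dZ_\lambda[\rS_0^\circ]_{\fn_0}\subseteq\fn_0^m\dZ_\lambda[\rS_0^\circ]_{\fn_0}$. So (d) and (e) are not ``flatness for a Nakayama lift''; their role is simply to push the single operator $\tN$ into $\fn_0^m$, which bypasses any need to control multiplicities or exhibit explicit cohomology classes.
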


\begin{proof}
Indeed, by \cite{LTXZZ}*{Proposition~6.3.1(4)}, $\nabla^0\circ\rd^{-1,2r}_{0,1}$ induces a surjective map
\[
\(\frac{\rE^{1,2r-2}_{0,2}(-1)}{\mu\rE^{-1,2r}_{0,2}}\)_{\fn_0}
\to\frac{\dZ_\lambda[\rS_0^\circ]_{\fn_0}}{\tN\dZ_\lambda[\rS_0^\circ]_{\fn_0}}
\]
for a certain spherical Hecke operator $\tN$ at $\fp^+$ (denoted as $(p+1)\tR^\circ_{N,\fp}-\tI^\circ_{N,\fp}$ in \emph{loc. cit.}). However, by \cite{LTXZZ}*{Proposition~B.3.5(2)} and (e), $\tN$ belongs to $\fn_0^m$. Thus, the proposition follows.
\end{proof}

The above proposition (or rather its proof) has a corollary that can be regarded as a certain ``Ihara lemma'' for definite unitary groups.

Before stating our Ihara lemma, let us recall the more familiar Ihara lemma for Shimura sets defined by a definite rational quaternion algebra $B$ unramified at $p$. Let $K$ be a decomposable open compact subgroup of $\widehat{B}^\times$ that is maximal at $p$ (hence $K_p\simeq\GL_2(\dZ_p))$, and let $K^\dag\subseteq K$ be a subgroup satisfying $(K^\dag)^p=K^p$ and that $K^\dag_p$ is Iwahori. Suppose that $\ell\nmid p^2-1$, so that we have a decomposition $\Ind_{K^\dag}^{K}\dZ_\ell=\dZ_\ell\oplus\Omega_\ell$ in which $\Omega_\ell$ is the ``Steinberg component''. Let $\rS\coloneqq B^\times\backslash\widehat{B}^\times/K$ and $\rS^\dag\coloneqq B^\times\backslash\widehat{B}^\times/K^\dag$ be the corresponding Shimura sets, with the natural projection $\pi\colon\rS^\dag\to\rS$ and the involution $\iota\colon\rS^\dag\to\rS^\dag$ over $\rS$. Then the Ihara lemma in this setting (see, for example, \cite{Tay89}*{Lemma~4}) says that if we localize the following composite map
\[
\Map_{K}\(\rS^\dag,\Omega_\ell\)\to
\Map_{K}\(\rS^\dag,\Ind_{K^\dag}^{K}\dZ_\ell\)
=\dZ_\ell[\rS^\dag]\xrightarrow{\pi_!\circ\iota_!}\dZ_\ell[\rS]
\]
at a non-Eisenstein maximal ideal of the spherical Hecke algebra (away from primes at which $K^\dag$ is not hyperspecial), then it is surjective.

To state our Ihara lemma, we need more notation. Suppose that $\ell\nmid p\prod_{i=1}^{n_0}(1-(-p)^i)$. Then the $\dZ_\lambda[K_0^\circ]$-module $\Ind_{K_0^\dag}^{K_0^\circ}\dZ_\lambda$ has a unique direct summand, which we denote as $\Omega_\lambda$, that is a free $\dZ_\lambda$-module of rank larger than $1$ and such that $\Omega_\lambda^{K_0^\rI}$ is a free $\dZ_\lambda$-module of rank $1$; here $K_0^\rI$ is a subgroup of $K_0^\dag$ such that $(K_0^\rI)^{\fp^+}=(K_0^\dag)^{\fp^+}$ and that $(K_0^\rI)_{\fp^+}$ is an Iwahori subgroup.

\begin{corollary}[Ihara lemma, definite case]\label{co:ihara_def}
Suppose that $\ell\nmid p\prod_{i=1}^{n_0}(1-(-p)^i)$ and that
\begin{enumerate}[label=(\alph*)]
  \item $\rH^i(\rM'_0\otimes_{\dQ_{p^2}}\ol\dQ_p,\dZ_\lambda)_{\fn_0}=0$ for $i\neq n-1$ and $\rH^{n-1}(\rM'_0\otimes_{\dQ_{p^2}}\ol\dQ_p,\dZ_\lambda)_{\fn_0}$ is a finite free $\dZ_\lambda$-module;

  \item the Galois representation $\rV^\Pi_{0,\lambda}$ is residually absolutely irreducible;

  \item the Satake parameter of $\Pi_{0,\fp}$ modulo $\lambda$ contains $p$ at most once and does not contain $-1$;

  \item the natural quotient map
      \[
      \frac{\dZ_\lambda[\rS_0^\circ]}{\fn_0}\to\frac{(\dZ_\lambda/\lambda)[\rS_0^\circ]}{\Ker\phi_0^\Pi}
      \]
      is an isomorphism.
\end{enumerate}
Then the map
\[
\Map_{K_0^\circ}\(\rS_0^\dag,\Omega_\lambda\)_{\fn_0}\to\dZ_\lambda[\rS_0^\bullet]_{\fn_0}
\]
is surjective, where $\Map_{K_0^\circ}\(\rS_0^\dag,\Omega_\lambda\)\to\dZ_\lambda[\rS_0^\bullet]$ is the composition
\[
\Map_{K_0^\circ}\(\rS_0^\dag,\Omega_\lambda\)\to
\Map_{K_0^\circ}\(\rS_0^\dag,\Ind_{K_0^\dag}^{K_0^\circ}\dZ_\lambda\)
=\dZ_\lambda[\rS_0^\dag]\to\dZ_\lambda[\rS_0^\bullet]
\]
in which the last map is the pushforward along the map $\rS_0^\dag\to\rS_0^\bullet$.
\end{corollary}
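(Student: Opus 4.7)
The strategy is to extract the Ihara lemma from the proof of Proposition \ref{pr:level_raising} (equivalently, from \cite{LTXZZ}*{Proposition~6.3.1}) by stopping one step short of applying the final pushforward $\tT^{\circ\bullet}_0$, and then identifying the source of the intermediate map with the Steinberg-parabolic summand appearing in the corollary.

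The first step is to identify $\Map_{K_0^\circ}(\rS_0^\dag,\Omega_\lambda)_{\fn_0}$ with an appropriate localized piece of $\rH^{2r}(\ol\rB_0^\bullet,\dZ_\lambda(r))$ arising from the Deligne--Lusztig fibration $\pi_0^\bullet\colon\rB_0^\bullet\to\rS_0^\bullet$ (Proposition \ref{pr:basic}(1)). The running hypothesis $\ell\nmid p\prod_{i=1}^{n_0}(1-(-p)^i)$ is exactly the condition that the determinant of the standard intertwining operator between parabolic inductions from the parahoric $K_0^\dag$ is a unit modulo $\lambda$, so the Steinberg summand $\Omega_\lambda$ of $\Ind_{K_0^\dag}^{K_0^\circ}\dZ_\lambda$ splits off integrally; on the cohomological side, this manifests as the integrality of the corresponding summand of the Deligne--Lusztig cohomology. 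Under this identification, the map in the corollary is realized by $\Inc^0_\bullet$ (possibly twisted by an auxiliary Hecke transfer between $\rS_0^\bullet$ and $\rS_0^\dag$), restricted to the Steinberg summand.

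The second step is to revisit the proof of Proposition \ref{pr:level_raising} and observe that the intermediate map
\[
\Psi\coloneqq\tT^{\bullet\circ}_0\circ\Inc^0_\circ+(p+1)\Inc^0_\bullet\colon\rE^{0,2r}_{0,1}\longrightarrow\dZ_\lambda[\rS_0^\bullet]
\]
(that is, $\nabla^0$ \emph{before} the final application of $\tT^{\circ\bullet}_0$) becomes surjective after localization at $\fn_0$, with no need to quotient out by any auxiliary Hecke operator. Indeed, the spherical operator $\tN$ in the original proof appeared precisely because $\tT^{\circ\bullet}_0\circ\tT^{\bullet\circ}_0$ acts on $\dZ_\lambda[\rS_0^\circ]$ by a scalar involving $\tN$; dropping the final application eliminates $\tN$ and simultaneously weakens hypothesis (c) of Proposition \ref{pr:level_raising} from ``exactly once'' to ``at most once,'' which is our (c). Hypotheses (a) and (b) play the same role as before: they ensure the weight spectral sequence $(\rE^{p,q}_{0,s})_{\fn_0}$ degenerates at the second page and that the edge maps relevant to $\Psi$ become isomorphisms after localization. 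Finally, hypothesis (d) allows one to replace $(\cdot)_{\fn_0}$ by $(\cdot)/\fn_0$, yielding the corollary as stated.

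The main technical obstacle is the first step: making the integral identification of $\Omega_\lambda$ with a specific summand of the Deligne--Lusztig cohomology precise, including the scalar $(p+1)$ that links $\Inc^0_\circ$ and $\Inc^0_\bullet$ via the Bruhat--Tits combinatorics recalled in the remark after Proposition \ref{pr:basic}. This amounts to a local computation at $\fp^+$ that parallels Taylor's original Ihara-type argument \cite{Tay89}*{Lemma~4} but takes place on an almost-self-dual parahoric of a unitary group, and the banal hypothesis $\ell\nmid p\prod_{i=1}^{n_0}(1-(-p)^i)$ is tailored precisely so that this comparison goes through integrally rather than just rationally.
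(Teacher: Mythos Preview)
Your overall strategy---extract the Ihara statement as a byproduct of the arithmetic level-raising argument (Proposition~\ref{pr:level_raising} / \cite{LTXZZ}*{Proposition~6.3.1}) and then pass to the localization---matches the paper's route, which simply cites the dual of \cite{LTXZZ}*{Corollary~6.3.5} together with Nakayama's lemma. So the skeleton is right.

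Two points need tightening. First, you never invoke Nakayama, and your final sentence about hypothesis~(d) is phrased in the wrong direction: you write that (d) ``allows one to replace $(\cdot)_{\fn_0}$ by $(\cdot)/\fn_0$, yielding the corollary as stated,'' but the corollary is stated for the \emph{localization}, not the quotient. The actual flow is: prove surjectivity after reduction modulo the maximal ideal (this is where (d) enters, identifying $\dZ_\lambda[\rS_0^\circ]/\fn_0$ with the $\Pi$-isotypic quotient), and then apply Nakayama to lift surjectivity to the localization $(\cdot)_{\fn_0}$. The paper singles out Nakayama explicitly for this reason.

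Second, your explanation of why dropping the final $\tT^{\circ\bullet}_0$ ``eliminates $\tN$'' is too glib. You assert that $\tN$ arises solely from the composite $\tT^{\circ\bullet}_0\circ\tT^{\bullet\circ}_0$, but $\nabla^0$ also contains the cross-term $(p+1)\,\tT^{\circ\bullet}_0\circ\Inc^0_\bullet$, so the bookkeeping is not as simple as ``remove one intertwiner, remove $\tN$.'' What actually happens in \cite{LTXZZ} is that the intermediate surjectivity onto $\dZ_\lambda[\rS_0^\bullet]_{\fn_0}$ (or rather its dual injectivity statement, Corollary~6.3.5 there) is established directly inside the level-raising proof, and it is this intermediate result---not a formal manipulation of $\Psi$---that one quotes. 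Your first step (the integral identification of $\Omega_\lambda$ with the relevant Deligne--Lusztig summand) is indeed the substantive local input, and you are right that the banal hypothesis $\ell\nmid p\prod_{i=1}^{n_0}(1-(-p)^i)$ is what makes it go through; but the passage from that identification to surjectivity of the displayed map requires the actual computation in \cite{LTXZZ}, not just the heuristic you sketch.
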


\begin{proof}
Indeed, this follows from the dual statement of \cite{LTXZZ}*{Corollary~6.3.5} and Nakayama's lemma.
\end{proof}

Suppose that in the initial data of Section \ref{ss:4}, $V$ is standard \emph{indefinite} instead of definite. We have the Shimura varieties $\cS_0^\circ$, $\cS_0^\bullet$ and $\cS_0^\dag$ over $F$ instead of Shimura sets $\rS_0^\circ$, $\rS_0^\bullet$ and $\rS_0^\dag$, respectively. Then $\cS_0^\circ$ has a canonical (projective) smooth model over $O_{F_\fp}=\dZ_{p^2}$, whose special fiber (over $\dF_{p^2}$) we denote by $\widetilde\cS_0^\circ$. Let $V'$, $\Lambda'_{\fp^+}$ and $K'_{0,\fp^+}$ be as in Proposition \ref{pr:moduli}(4) but now with $V'$ standard definite. Put $\rS'_0\coloneqq\rU(V')(F^+)\backslash\rU(V')(\dA_{F^+}^\infty)/K_0^{\fp^+}K'_{0,\fp^+}$. Then the set of irreducible components of the basic locus of $\widetilde\cS_0^\circ$ is canonically parameterized by $\rS'_0$. Since the basic locus is of pure of dimension $r-1$, we have the absolute cycle class map
\[
\dZ_\lambda[\rS'_0]\to\rH^{2r}(\widetilde\cS_0^\circ,\dZ_\lambda(r))
\]
over $\dF_{p^2}$. The following conjecture generalizes a theorem of Ribet \cite{Rib90} on the level-raising for modular curves (or rather unitary Shimura curves in our setting) at good places.

\begin{conjecture}[Arithmetic level raising, self-dual case]\label{co:level_raising}
Suppose that
\begin{enumerate}[label=(\alph*)]
  \item $\rH^i(\cS_0^\circ\otimes_F\ol\dQ,\dZ_\lambda)_{\fn_0}=0$ for $i\neq n-1$ and $\rH^{n-1}(\cS_0^\circ\otimes_F\ol\dQ_p,\dZ_\lambda)_{\fn_0}$ is a finite free $\dZ_\lambda$-module;

  \item the Galois representation $\rV^\Pi_{0,\lambda}$ is residually absolutely irreducible;

  \item the Satake parameter of $\Pi_{0,\fp}$ modulo $\lambda$ contains $p$ at most once.
\end{enumerate}
Then the localized map
\[
\dZ_\lambda[\rS'_0]_{\fn_0}\to\rH^{2r}(\widetilde\cS_0^\circ,\dZ_\lambda(r))_{\fn_0}
\]
is surjective.
\end{conjecture}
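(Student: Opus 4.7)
The strategy mirrors the proof of Proposition~\ref{pr:level_raising} (arithmetic level raising on the definite side), replacing the definite moduli scheme $\bM_0$ by an integral model of the indefinite Shimura variety $\cS_0^\bullet$. That integral model should be projective strictly semistable over $\dZ_{p^2}$, with special fiber admitting indefinite-side analogs of the strata $\rM_0^\circ$, $\rM_0^\bullet$, $\rM_0^\dag$ of Proposition~\ref{pr:moduli}: the ``$\circ$-stratum'' is locally isomorphic to $\widetilde\cS_0^\circ$, and its basic locus has components indexed by $\rS'_0$ via Rapoport--Zink $p$-adic uniformization, which is precisely the source of the cycle class map in the conjecture.

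Given this semistable model, I would run the weight spectral sequence abutting to $\rH^*(\cS_0^\bullet \otimes_F \ol F_\fp, \dZ_\lambda(r))$. Under the natural analogs of hypotheses~(a) and~(b), the localized $E_2$-page should degenerate at $\fn_0$, producing a short exact sequence analogous to \eqref{eq:singular} whose top quotient involves the Gysin image from the basic locus of the $\circ$-stratum; this identifies $\rH^1_\sing(\dQ_{p^2}, \rH^{2r-1}(\cS_0^\bullet \otimes_F \ol\dQ_p, \dZ_\lambda(r)))_{\fn_0}$ as an extension controlled by that Gysin image together with a contribution from the $\bullet$-stratum.

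Next I would construct an intertwining map $\nabla$ to $\dZ_\lambda[\rS'_0]_{\fn_0}$ in direct analogy with the map $\nabla^0$ of Section~6, by a trace computation along the fibers of the natural projection from the ``$\dag$-stratum'' (the intersection of $\circ$- and $\bullet$-strata) to $\rS'_0$. Hypothesis~(c), which states that $p$ appears at most once in the Satake parameter of $\Pi_{0,\fp}$ modulo $\lambda$, rules out a ``wrong-eigenvalue'' Hecke correction at $\fp^+$, just as at the end of the proof of Proposition~\ref{pr:level_raising}. Surjectivity of $\nabla$ onto $\dZ_\lambda[\rS'_0]_{\fn_0}$ would then translate, via Tate local duality together with the Gysin sequence for the inclusion of the basic locus into $\widetilde\cS_0^\circ$, into the desired surjectivity of the cycle class map of the conjecture.

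The principal obstacle is an Ihara-type lemma on the indefinite side: the direct analog of Corollary~\ref{co:ihara_def}, but for the indefinite Shimura variety $\cS_0^\bullet$ in place of the definite Shimura set. In higher rank this is a major open problem, lying beyond what current residual modularity lifting and Taylor--Wiles patching techniques can reach, and it is presumably why the self-dual case is offered only as a conjecture.
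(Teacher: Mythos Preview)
This statement is a \emph{conjecture} in the paper; no proof is offered. What the paper does supply is the proposition immediately following it (proved in \cite{LTX}): surjectivity of the pushforward map in Conjecture~\ref{co:ihara} (the indefinite Ihara lemma) implies surjectivity of the cycle class map here. You have correctly isolated the indefinite Ihara lemma as the essential missing input, and that is indeed why the statement is left conjectural.

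Your proposed route to the reduction, however, rests on a geometric picture that does not hold. You posit a projective strictly semistable integral model of $\cS_0^\bullet$ whose special fiber carries strata analogous to $\rM_0^\circ,\rM_0^\bullet,\rM_0^\dag$, with the $\circ$-stratum locally isomorphic to $\widetilde\cS_0^\circ$. But we are in the regime $n=n_0$ even, and $K_{0,\fp^+}^\bullet$ is the stabilizer of a lattice $\Lambda^\bullet$ satisfying $(\Lambda^\bullet)^\vee=p\Lambda^\bullet$, which is a \emph{hyperspecial} parahoric (see Example~\ref{ex:relevant parahoric}). Hence $\cS_0^\bullet$ has \emph{good} reduction at $\fp$, and there is no semistable stratification or weight spectral sequence to run. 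The paper's reduction in \cite{LTX} proceeds quite differently: it works directly with the finite \'etale correspondence $\cS_0^\circ\leftarrow\cS_0^\dag\to\cS_0^\bullet$ and the local system $\Omega_\lambda$ on $\cS_0^\circ$, linking the pushforward map of Conjecture~\ref{co:ihara} to the cycle class map on the smooth special fiber $\widetilde\cS_0^\circ$ through the description of its basic locus, rather than through any spectral-sequence machinery on the indefinite side.
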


Suppose that $\ell\nmid p\prod_{i=1}^{n_0}(1-(-p)^i)$. Then $\Omega_\lambda$ can be regarded as a $\dZ_\lambda$-local system on $\cS_0^\circ$. In particular, we have maps
\[
\rH^i(\cS_0^\circ\otimes_F\ol\dQ,\Omega_\lambda)
\to\rH^i(\cS_0^\circ\otimes_F\ol\dQ,\Ind_{K_0^\dag}^{K_0^\circ}\dZ_\lambda)
=\rH^i(\cS_0^\dag\otimes_F\ol\dQ,\dZ_\lambda)
\to\rH^i(\cS_0^\bullet\otimes_F\ol\dQ,\dZ_\lambda),
\]
in which the last map is the pushforward along the finite \'{e}tale morphism $\cS_0^\dag\to\cS_0^\bullet$. The following conjecture generalizes the Ihara lemma for modular curves (or rather unitary Shimura curves in our setting).

\begin{conjecture}[Ihara lemma, indefinite case]\label{co:ihara}
Suppose that $\ell\nmid p\prod_{i=1}^{n_0}(1-(-p)^i)$ and that
\begin{enumerate}[label=(\alph*)]
  \item $\rH^i(\cS_0^\circ\otimes_F\ol\dQ,\dZ_\lambda)_{\fn_0}=0$ for $i\neq n-1$ and $\rH^{n-1}(\cS_0^\circ\otimes_F\ol\dQ,\dZ_\lambda)_{\fn_0}$ is a finite free $\dZ_\lambda$-module;

  \item the Galois representation $\rV^\Pi_{0,\lambda}$ is residually absolutely irreducible;

  \item the Satake parameter of $\Pi_{0,\fp}$ modulo $\lambda$ contains $p$ at most once.
\end{enumerate}
Then the localized map
\[
\rH^{2r-1}(\cS_0^\circ\otimes_F\ol\dQ,\Omega_\lambda)_{\fn_0}\to
\rH^{2r-1}(\cS_0^\bullet\otimes_F\ol\dQ,\dZ_\lambda)_{\fn_0}
\]
is surjective.
\end{conjecture}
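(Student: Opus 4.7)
\medskip

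\noindent
\textbf{Proof proposal for Conjecture \ref{co:ihara}.}
The plan is to prove the conjecture by mimicking the strategy of the definite Ihara lemma (Corollary \ref{co:ihara_def}) after reducing to a question on the special fiber via the semistable integral model provided by Proposition \ref{pr:moduli}(4). The key point is that the basic locus of $\cS_0^\circ$ at $\fp$ admits a uniformization by definite Shimura sets analogous to Rapoport--Zink uniformization, so that both the source and the target of the map in question are built, at the level of the special fiber, from the combinatorics controlled by Section \ref{ss:4}.

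First, I would introduce a semistable integral model of the Iwahori-level Shimura variety $\cS_0^\dag$ at $\fp$ (or rather, of an auxiliary Shimura variety whose pushforward to $\cS_0^\circ$ realizes the local system $\Omega_\lambda$ as a direct summand). Using Proposition \ref{pr:moduli} and the constructions of Section \ref{ss:4}, its special fiber fits into a stratification whose $\circ$-stratum is a projective space bundle over a definite Shimura set and whose $\bullet$-stratum carries the Deligne--Lusztig bundle $\rB_0^\bullet\to\rS_0^\bullet$. Then I would write down the weight spectral sequence for $\rH^\bullet(\cS_0^\circ\otimes_F\ol\dQ_p,\Omega_\lambda)$ after localizing at $\fn_0$. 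Condition (a) forces concentration in middle degree, hence by a standard argument the spectral sequence degenerates on the $E_2$-page and the nontrivial piece of $\rH^{2r-1}(\cS_0^\circ\otimes_F\ol\dQ_p,\Omega_\lambda)_{\fn_0}$ is identified with the kernel, cokernel, or a subquotient of restriction/Gysin maps among strata cohomologies of the special fiber. Similarly, $\rH^{2r-1}(\cS_0^\bullet\otimes_F\ol\dQ,\dZ_\lambda)_{\fn_0}$ is controlled by the weight spectral sequence for the corresponding Iwahori model of $\cS_0^\bullet$ and its basic-locus uniformization via $\rB_0^\bullet$.

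Next, I would identify the map in the conjecture, after these reductions, with a map between pieces of the cohomologies of the strata that is governed by the definite Shimura-set map $\rS_0^\dag\to\rS_0^\bullet$ and the projection $\rS_0^\dag\to\rS_0^\circ$. Concretely, the hyperplane class $\xi_0$ and the projective bundle structure on the $\circ$-stratum trivialize the contribution of $\rM_0^\circ$ to the cohomology in the non-Eisenstein localization, reducing the surjectivity statement to the corresponding surjectivity at the level of $\dZ_\lambda$-modules of functions on $\rS_0^\dag$ and $\rS_0^\bullet$. At this point one applies Corollary \ref{co:ihara_def} (which holds under the present hypotheses, since (c) here implies (c) and (d) there after residual multiplicity-one, and (a), (b) match), together with Nakayama's lemma, to conclude surjectivity.

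The main obstacle will be the middle step: matching the Hecke-equivariant map between global cohomologies with the combinatorial map between definite Shimura sets. Unlike in the definite setting of Section \ref{ss:4}, where one has direct access to the correspondence $\rS_0^\dag\subseteq\rS_0^\circ\times\rS_0^\bullet$ and to the maps $\tT^{\circ\bullet}_0,\tT^{\bullet\circ}_0$, here one must argue through the nearby cycles complex and track the monodromy filtration through the degeneration of the weight spectral sequence, including controlling the cokernel of the monodromy operator $\mu$ as in \eqref{eq:singular}. Condition (b) is essential to guarantee that all $E_2$-pieces outside the middle are killed by the localization, but one still needs a Jacquet--Langlands-type argument (or a suitable integral refinement thereof) to identify the basic-locus contribution with the definite side. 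For $n=2$ this is exactly Ribet's level-raising \cite{Rib90}; for general $n$, making the basic-locus uniformization work integrally (and compatibly with $\Omega_\lambda$) is the substantive new ingredient that the conjecture requires.
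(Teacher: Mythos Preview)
The statement you are attempting to prove is labeled a \emph{Conjecture} in the paper and is not proved there; the paper offers no argument to compare against. The only information the paper gives is the Remark immediately following Proposition~6.4 (the deduction Conjecture~\ref{co:ihara} $\Rightarrow$ Conjecture~\ref{co:level_raising}): in the companion work \cite{LTX} the authors can deduce Conjecture~\ref{co:ihara} from the definite Ihara lemma (Corollary~\ref{co:ihara_def}) only \emph{under certain additional conditions on $\Pi$}. So your overall strategy---reduce the indefinite case to the definite one via the geometry of the special fiber---is indeed the route the authors have in mind, but even they do not claim it goes through under the hypotheses (a)--(c) alone.

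Your proposal has a concrete gap at the point where you invoke Corollary~\ref{co:ihara_def}. You write that ``(c) here implies (c) and (d) there after residual multiplicity-one,'' but this is false on both counts. Condition~(c) of Corollary~\ref{co:ihara_def} also requires that the Satake parameter modulo $\lambda$ does \emph{not} contain $-1$, which is not assumed in Conjecture~\ref{co:ihara}. More seriously, condition~(d) of Corollary~\ref{co:ihara_def} is the global multiplicity-one statement that
\[
\frac{\dZ_\lambda[\rS_0^\circ]}{\fn_0}\to\frac{(\dZ_\lambda/\lambda)[\rS_0^\circ]}{\Ker\phi_0^\Pi}
\]
is an isomorphism; this is the condition $(*)$ of Proposition~\ref{pr:singular} (and (P6) of Section~\ref{ss:8}) and is not a consequence of the local Satake hypothesis~(c). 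It is an extra input that the authors impose separately when they need it. So even granting your reduction to the special fiber, you cannot simply quote Corollary~\ref{co:ihara_def}.

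You yourself flag the deeper obstruction in your final paragraph: matching the map on middle-degree cohomology of the indefinite Shimura variety with the combinatorial map on definite Shimura sets requires an integral Jacquet--Langlands comparison through the basic locus, and controlling the monodromy cokernel in \eqref{eq:singular}. That is precisely the ``substantive new ingredient'' the conjecture demands, and your outline does not supply it. In short, your sketch points in the same direction as \cite{LTX}, but it is not a proof of the conjecture as stated, and the paper does not contain one either.
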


In \cite{LTX}, we prove the following deduction.

\begin{proposition}
Suppose that $\ell\nmid p\prod_{i=1}^{n_0}(1-(-p)^i)$. If the map
\[
\rH^{2r-1}(\cS_0^\circ\otimes_F\ol\dQ,\Omega_\lambda)_{\fn_0}\to
\rH^{2r-1}(\cS_0^\bullet\otimes_F\ol\dQ,\dZ_\lambda)_{\fn_0}
\]
in Conjecture \ref{co:ihara} is surjective, then the map
\[
\dZ_\lambda[\rS'_0]_{\fn_0}\to\rH^{2r}(\widetilde\cS_0^\circ,\dZ_\lambda(r))_{\fn_0}
\]
in Conjecture \ref{co:level_raising} is surjective.
\end{proposition}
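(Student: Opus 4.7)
The strategy is to introduce a strictly semistable integral model for the parahoric-level Shimura variety $\cS_0^\bullet$ at $\fp$ and adapt the arithmetic level-raising argument of Section \ref{ss:4} (especially the proof of Proposition \ref{pr:level_raising}) from the definite to the indefinite setting, with the Ihara surjectivity controlling the one extra piece that arises. First, I would construct a strictly semistable projective integral model $\widetilde{\cM}$ of $\cS_0^\bullet$ over $O_{F_\fp}=\dZ_{p^2}$ by adapting Definition \ref{de:moduli} with $V$ now taken to be standard indefinite (so that the roles of $V$ and $V'$ in Proposition \ref{pr:moduli}(4) are swapped). Its special fiber should decompose into smooth strata $\widetilde{\cM}^\circ\cup\widetilde{\cM}^\bullet$ with intersection $\widetilde{\cM}^\dag$, mirroring Proposition \ref{pr:moduli}(2) and Proposition \ref{pr:basic}. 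The new crucial feature, by contrast with the definite-side Proposition \ref{pr:moduli}(3), is that $\widetilde{\cM}^\circ$ should be a $\dP^{n_0-1}$-bundle over $\widetilde{\cS}_0^\circ$ itself, inside which $\widetilde{\cM}^\dag$ sits as a Fermat-hypersurface subbundle whose image in $\widetilde{\cS}_0^\circ$ is precisely the basic locus, with irreducible components indexed by $\rS'_0$.

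Next, I would apply the weight spectral sequence for $\widetilde{\cM}$ at $\fp$ to extract the indefinite analog of \eqref{eq:singular}: a three-term exact sequence for the singular quotient $\rH^1_\sing(\dQ_{p^2},\rH^{2r-1}(\cS_0^\bullet\otimes_F\ol\dQ_p,\dZ_\lambda(r)))$ whose leftmost term is a subquotient of $\rH^{2r-2}(\widetilde{\cM}^\dag\otimes\ol\dF_p,\dZ_\lambda(r-1))$ and whose rightmost term is $\rH^{2r-1}(\widetilde{\cM}^\bullet\otimes\ol\dF_p,\dZ_\lambda(r-1))^{\Gal(\ol\dF_p/\dF_{p^2})}$. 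Using the $\dP^{n_0-1}$-bundle structure together with the projection formula and cupping with the hyperplane class, the cycle class map $\dZ_\lambda[\rS'_0]\to\rH^{2r}(\widetilde{\cS}_0^\circ,\dZ_\lambda(r))$ should be identified (after $\fn_0$-localization) with the composition of the differential $\rd^{-1,2r}_{0,1}$ and a Hecke-equivariant projection analogous to $\nabla^0$ of Section \ref{ss:4}. Thus surjectivity of the cycle class map reduces to showing that the rightmost term in the singular-quotient sequence contributes nothing to the cokernel after $\fn_0$-localization.

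This is where the Ihara surjectivity enters. By the analog of Proposition \ref{pr:basic}(1,3), $\widetilde{\cM}^\bullet$ should admit locally a Deligne--Lusztig variety fibration of relative dimension $r$. A Hecke-module computation paralleling \cite{LTXZZ}*{Corollary~6.3.5} then identifies $\rH^{2r-1}(\widetilde{\cM}^\bullet\otimes\ol\dF_p,\dZ_\lambda(r-1))^{\Gal}_{\fn_0}$ with a quotient of the Steinberg-isotypic component of $\rH^{2r-1}(\cS_0^\bullet\otimes_F\ol\dQ,\dZ_\lambda)_{\fn_0}$, via Hecke-equivariance of the nearby-cycle spectral sequence. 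The assumed Ihara surjectivity bounds this last term above by $\rH^{2r-1}(\cS_0^\circ\otimes_F\ol\dQ,\Omega_\lambda)_{\fn_0}$, and a Nakayama-type argument (using the hypothesis $\ell\nmid p\prod_{i=1}^{n_0}(1-(-p)^i)$ to invert the Steinberg-projection operator) forces the obstructing contribution in the cokernel to vanish, yielding the desired surjectivity.

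The main obstacle will be making the identifications in the second paragraph precise: specifically, establishing that the cycle class map of Conjecture \ref{co:level_raising} coincides (after localization at $\fn_0$) with the indicated composition in the weight spectral sequence of $\widetilde{\cM}$. This requires adapting the moduli-theoretic constructions of Section \ref{ss:4} and the Hecke computations of \cite{LTXZZ}*{Section~6} to the case where $V$ is standard indefinite, tracking Hecke-equivariance through the projective-bundle geometry and the Fermat-hypersurface embeddings carefully. A secondary difficulty is the precise correspondence between Steinberg-isotypic cohomology on the special-fiber stratum $\widetilde{\cM}^\bullet$ and $\Omega_\lambda$-coefficient cohomology on the generic fiber $\cS_0^\circ$, which demands a compatibility statement for the Hecke action on nearby cycles.
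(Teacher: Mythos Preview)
The paper does not actually prove this proposition here; it only records the statement and attributes the argument to \cite{LTX}. So there is no in-paper proof to compare against directly. Nonetheless, your proposal has a concrete geometric error that blocks it as written.

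In the indefinite setting preceding Conjectures~\ref{co:level_raising} and~\ref{co:ihara} (where $V$ is standard indefinite and $n=n_0$ is even), the level $K_{0,\fp^+}^\bullet$ is \emph{hyperspecial}, not a proper parahoric: the lattice $\Lambda_{\fp^+}^\bullet$ with $(\Lambda_{\fp^+}^\bullet)^\vee=p\Lambda_{\fp^+}^\bullet$ becomes self-dual after rescaling the hermitian form by $p$, and since $n$ is even this rescaling preserves the isometry class of $V_{\fp^+}$ (compare Example~\ref{ex:relevant parahoric}, where $\sG_{\{r,n-r\}}$ is stated to be hyperspecial when $n=2r$). Consequently $\cS_0^\bullet$ has \emph{good} reduction at $\fp$, and there is no strictly semistable integral model $\widetilde\cM$ of $\cS_0^\bullet$ whose special fiber admits a nontrivial decomposition $\widetilde\cM^\circ\cup\widetilde\cM^\bullet$ of the kind you describe. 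Adapting Definition~\ref{de:moduli} verbatim with $V$ indefinite does not produce a new object either: the moduli problem there depends on $V$ only through $V\otimes_\dQ\dA^{\infty,p}$, which is the same for $V$ and $V'$ by Proposition~\ref{pr:moduli}(4).

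Even setting that aside, your picture has a dimension inconsistency. You posit that $\widetilde\cM^\circ$ is a $\dP^{n_0-1}$-bundle over $\widetilde\cS_0^\circ$; but $\dim\widetilde\cS_0^\circ=n_0-1$, so such a bundle would have dimension $2n_0-2$, whereas any integral model of a Shimura variety for $\rU(V_0)$ in this setup has relative dimension $n_0-1$. Similarly, the basic locus of $\widetilde\cS_0^\circ$ has pure dimension $r-1$ and is stratified by Deligne--Lusztig-type varieties; it is not what a Fermat-hypersurface subbundle of a full $\dP^{n_0-1}$-bundle would project onto.

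The philosophy of your outline---connect the cokernel of the cycle-class map to a cohomological term that the Ihara surjectivity kills---is the right one. But the geometric bridge cannot be a semistable model of $\cS_0^\bullet$. A workable route instead studies the good-reduction special fibers $\widetilde\cS_0^\circ$ and $\widetilde\cS_0^\bullet$ directly, linked by the correspondence coming from the genuinely parahoric level $K_0^\dag$, and uses excision/Gysin sequences for the basic locus in place of a weight spectral sequence. You should rebuild the argument around that geometry rather than around a non-existent semistable degeneration of $\cS_0^\bullet$.
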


\begin{remark}
In \cite{LTX}, we are able to show that under certain conditions on $\Pi$, Corollary \ref{co:ihara_def} implies Conjecture \ref{co:ihara} hence Conjecture \ref{co:level_raising}.
\end{remark}

\section{Interlude: potential map and singular quotient}

In this section, we explain the motivation of the map $\nabla^0$ constructed in the previous section. The underlying idea is the so-called \emph{potential map} used to compute $\rH^1_\sing$ under a certain favorable situation for general varieties over local fields, which was developed by one of us in \cite{Liu2}. Again, let $n=2r$ be a positive even integer.

Let $K$ be a henselian discrete valuation field with residue field $\kappa$ and a separable closure $\overline{K}$, such that the characteristic of $\kappa$ is different from $\ell$. Let $\Lambda$ be either a $\dZ_\ell$-ring that is finitely generated as a $\dZ_\ell$-module or the quotient field of such a ring, which will serve as the ring of coefficients. We consider a proper strictly semistable scheme $X$ over $O_K$ of pure relative dimension $n-1$. In particular, the special fiber $X_\kappa\coloneqq X\otimes_{O_K}\kappa$ is a strict normal crossing divisor of $X$. Suppose that $\{X_1,\dots,X_m\}$ is the set of irreducible components of $X_\kappa$. For $p\geq 0$, put
\[
X^{(p)}_\kappa\coloneqq\coprod_{I\subset\{1,\dots,m\},|I|=p+1}\bigcap_{i\in I}X_i.
\]
Then $X^{(p)}_\kappa$ is a finite disjoint union of smooth proper $\kappa$-schemes of codimension $p$ in $X_\kappa$. We have the pullback map
\[
\delta_p^*\colon\rH^q(X^{(p)}_{\overline{\kappa}},\Lambda(j))
\to\rH^q(X^{(p+1)}_{\overline{\kappa}},\Lambda(j))
\]
and the pushforward (Gysin) map
\[
\delta_{p*}\colon\rH^q(X^{(p)}_{\overline{\kappa}},\Lambda(j))
\to\rH^{q+2}(X^{(p-1)}_{\overline{\kappa}},\Lambda(j+1))
\]
for every integer $j$ (see \cite{Liu2}*{\S2.1} for more details). These maps satisfy the formula
\begin{align*}
\delta_{p-1}^*\circ\delta_{p*}+\delta_{p+1*}\circ\delta_p^*=0.
\end{align*}

We have the weight spectral sequence $\rE^{p,q}_s$ attached to $X$ with coefficients $\Lambda(r)$ abutting to $\rH^{p+q}(X_{\overline{K}},\Lambda(r))$, whose first page is given by
\[
\rE^{p,q}_1=\bigoplus_{i\geq\max(0,-p)}\rH^{q-2i}(X_{\overline{\kappa}}^{(p+2i)},\Lambda(r-i)).
\]
We also have the monodromy map
\[
\mu^{p,q}_s\colon\rE^{p-1,q+1}_s\to\rE^{p+1,q-1}_s(-1)
\]
induced by the natural one on the first page.

Put
\[
\rB^r(X)\coloneqq
\Ker\(\delta_0^*\colon\rH^{2r}(X_{\overline{\kappa}}^{(0)},\Lambda(r))
\to\rH^{2r}(X_{\overline{\kappa}}^{(1)},\Lambda(r))\),
\]
and dually
\[
\rB_r(X)\coloneqq
\coker\(\delta_{1*}\colon\rH^{2(r-1)}(X_{\overline{\kappa}}^{(1)},\Lambda(r-1))
\to\rH^{2r}(X_{\overline{\kappa}}^{(0)},\Lambda(r))\).
\]
As a submodule of $\rE^{0,2r}_1$, $\rB^r(X)$ is contained in $\Ker\rd^{0,2r}_1$. Thus, it makes sense to put
\[
\rB^r(X)^0\coloneqq\Ker\(\rB^r(X)\to\rE^{0,2r}_2\),
\]
and dually the quotient $\rB_r(X)_0$ of $\rB_r(X)$. It is not hard to see that the identity map of $\rH^{2r}(X_{\overline{\kappa}}^{(0)},\Lambda(r))$ induces a map
\[
\rB_r(X)_0\to\rB^r(X)^0.
\]
Finally, denote by $\rA_r(X)_0$ and $\rA^r(X)^0$ the maximal $\Gal(\overline\kappa/\kappa)$-invariant submodule and quotient module of $\rB_r(X)_0$ and $\rB^r(X)^0$, respectively. We define the \emph{potential map} to be the composite map
\[
\Delta\colon\rA_r(X)_0\to\rB_r(X)_0\to\rB^r(X)^0\to\rA^r(X)^0.
\]

\begin{proposition}\label{pr:potential}
Suppose that
\begin{enumerate}[label=(\alph*)]
  \item $\rE^{p,q}_s$ degenerates from the second page;

  \item if $\rE^{p,n-1-p}_2(-1)$ has a non-trivial subquotient on which $\Gal(\overline\kappa/\kappa)$ acts trivially, then $p=1$;

  \item for every $\Gal(\overline\kappa/\kappa)$-stable subquotient $\Lambda$-module $\rM$ of $\ker\mu^{1,2r-1}_1\oplus\coker\mu^{-1,2r-1}_1$, the natural map
      $\rM^{\Gal(\overline\kappa/\kappa)}\to\rM_{\Gal(\overline\kappa/\kappa)}$ is an isomorphism.
\end{enumerate}
Then we have a canonical exact sequence
\begin{align*}
0 \to \rH^1_\unr(K,\rH^{2r-1}(X_{\overline{K}},\Lambda(r)))\to
\rA_r(X)_0 \xrightarrow{\Delta^r} \rA^r(X)^0 \xrightarrow{\eta^r}
\rH^1_\sing(K,\rH^{2r-1}(X_{\overline{K}},\Lambda(r))) \to 0
\end{align*}
of $\Lambda$-modules.
\end{proposition}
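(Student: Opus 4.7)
My approach is to identify both sides of the desired exact sequence with spectral-sequence data via the inertia action on $V\coloneqq\rH^{2r-1}(X_{\overline{K}},\Lambda(r))$. Starting from the Hochschild--Serre spectral sequence for $\rI_K\subseteq\Gal(\overline{K}/K)$, I obtain a short exact sequence
\[
0\to\rH^1(\Gal(\overline\kappa/\kappa),V^{\rI_K})\to\rH^1(K,V)\to\rH^1(\rI_K,V)^{\Gal(\overline\kappa/\kappa)}\to 0,
\]
in which the left term is $\rH^1_\unr(K,V)$ and the right term is $\rH^1_\sing(K,V)$. Since $X/O_K$ is strictly semistable and $\ell\neq\mathrm{char}(\kappa)$, the tame inertia acts on $V$ through a nilpotent monodromy operator $N\colon V\to V(-1)$, so $V^{\rI_K}=\Ker N$ and $\rH^1(\rI_K,V)\cong(V/NV)(-1)$.

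Using condition (a), the weight spectral sequence degenerates at $\rE_2$, giving $V$ a canonical filtration with graded pieces $\rE^{p,2r-1-p}_2$, on which $N$ is induced by the maps $\mu^{\bullet,\bullet}_2$. A careful analysis of the resulting subquotient filtrations shows that $\Ker N$ and $V/NV$ each have an ``outermost'' graded piece involving $\Ker\mu^{1,2r-1}_1$ and $\coker\mu^{-1,2r-1}_1$ (up to passage to $\rE_2$ and appropriate Tate twists). These pieces match the constructions of $\rB^r(X)^0$ and $\rB_r(X)_0$ exactly: the subspace $\rB^r(X)=\Ker\delta_0^*$ captures the portion of $\rE^{0,2r}_1$ annihilated by the $\rE_1$-differential at $p=1$, while $\rB^r(X)^0$ further restricts to the image of the Gysin differential from $p=-1$; a dual description holds for $\rB_r(X)_0$ as a quotient of the same principal summand of $\rE^{0,2r}_1$.

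Passing to $\Gal(\overline\kappa/\kappa)$-invariants of the above identifications, the modules $\rA_r(X)_0$ and $\rA^r(X)^0$ match the Galois-invariant outer pieces of $\Ker N$ and $(V/NV)(-1)$, respectively. Here condition (b) ensures that among the graded pieces $\rE^{p,2r-1-p}_2(-1)$ only $p=1$ contains Galois-invariant subquotients, so the contributions to $V^{\rI_K,\Gal}$ and $V_{\rI_K}(-1)^{\Gal}$ localize exactly to the outer pieces from the previous step; condition (c) supplies the natural identification $\rM^{\Gal}\cong\rM_{\Gal}$ on the relevant subquotients, reconciling the coinvariant description of $\rH^1(\Gal(\overline\kappa/\kappa),-)$ for finite residue fields with the invariant description of $\rA_r(X)_0$. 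Under these identifications, the potential map $\Delta^r$, being induced by the identity on $\rH^{2r}(X^{(0)}_{\overline\kappa},\Lambda(r))$, coincides with the natural map connecting the unramified and singular contributions to $\rH^1(K,V)$ through the monodromy operator, yielding $\Ker\Delta^r=\rH^1_\unr(K,V)$ and $\coker\Delta^r=\rH^1_\sing(K,V)$.

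The main obstacle will be the bookkeeping with Tate twists and the intricate interplay between the weight filtration (from the spectral sequence) and the monodromy filtration (from iterations of $N$). Making precise the identifications of $\rA_r(X)_0$ and $\rA^r(X)^0$ with the correct subquotients of $\Ker N$ and $V/NV$, rather than some larger or smaller piece, requires careful diagram chasing with the $\rE_1$- and $\rE_2$-differentials and close attention to how conditions (b) and (c) eliminate interior contributions from the weight filtration.
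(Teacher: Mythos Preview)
The paper does not prove this proposition directly: its entire proof is the citation to \cite{Liu2}*{Theorem~2.9}. Your sketch is in the spirit of that reference---computing $\rH^1_\unr$ and $\rH^1_\sing$ through the monodromy operator $N$ on $V$, then using the weight filtration coming from (a) together with conditions (b) and (c) to isolate the relevant graded pieces and match them with $\rA_r(X)_0$ and $\rA^r(X)^0$.

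There is one genuine gap. The proposition is stated for an arbitrary henselian discrete valuation field $K$, with no assumption that $\kappa$ is finite. Two of your steps silently use $\kappa$ finite: the surjectivity on the right of your inflation--restriction sequence requires $\mathrm{cd}\,\Gal(\overline\kappa/\kappa)\leq 1$, and your ``coinvariant description of $\rH^1(\Gal(\overline\kappa/\kappa),-)$'' is specific to procyclic Galois groups. In the actual argument of \cite{Liu2}, one does not pass through these group-cohomological identities in general; instead condition (c) is used structurally, as an invariants--coinvariants identification on the specific subquotients of $\ker\mu^{1,2r-1}_1\oplus\coker\mu^{-1,2r-1}_1$ that arise, and condition (b) kills all other contributions so that no further input about $\Gal(\overline\kappa/\kappa)$ is needed. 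Your sketch should be reorganized to invoke (c) in this way rather than via the finite-field shortcut. Relatedly, the phrase ``the natural map connecting the unramified and singular contributions'' is imprecise---there is no canonical map $\rH^1_\unr\to\rH^1_\sing$; the content is rather that $\Delta^r$, built from the identity on $\rH^{2r}(X^{(0)}_{\overline\kappa},\Lambda(r))$, has kernel and cokernel canonically identified with these two groups once the bookkeeping is done.
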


\begin{proof}
This is \cite{Liu2}*{Theorem~2.9}.
\end{proof}

When $X^{(2)}=\emptyset$, which is the case from the previous section where $K=\dQ_{p^2}$ and $X=\bM_0$, the map $\eta^r\colon\rA^r(X)^0 \to \rH^1_\sing(K,\rH^{2r-1}(X_{\overline{K}},\Lambda(r)))$ can be explained easily. In this case, the first page $\rE^{p,q}_1$ is as follows:
     \[
     \boxed{
     \xymatrix{
     q\geq 2r+1 & \cdots \ar[r] & \cdots \ar[r] & \cdots \\
     q=2r & \rH^{2r-2}(X^{(1)}_{\ol\kappa},\Lambda(r-1)) \ar[r]^-{\rd^{-1,2r}_1} & \rH^{2r}(X^{(0)}_{\ol\kappa},\Lambda(r)) \ar[r]^-{\rd^{0,2r}_1} &
     \rH^{2r}(X^{(1)}_{\ol\kappa},\Lambda(r)) \\
     q=2r-1 & \rH^{2r-3}(X^{(1)}_{\ol\kappa},\Lambda(r-1)) \ar[r]^-{\rd^{-1,2r-1}_1} & \rH^{2r-1}(X^{(0)}_{\ol\kappa},\Lambda(r)) \ar[r]^-{\rd^{0,2r-1}_1} &
     \rH^{2r-1}(X^{(1)}_{\ol\kappa},\Lambda(r)) \\
     q=2r-2 & \rH^{2r-4}(X^{(1)}_{\ol\kappa},\Lambda(r-1)) \ar[r]^-{\rd^{-1,2r-2}_1} & \rH^{2r-2}(X^{(0)}_{\ol\kappa},\Lambda(r)) \ar[r]^-{\rd^{0,2r-2}_1} &
     \rH^{2r-2}(X^{(1)}_{\ol\kappa},\Lambda(r)) \\
     q\leq 2r-3 & \cdots \ar[r] & \cdots \ar[r] & \cdots \\
     \rE^{p,q}_1 & p=-1 & p=0 & p=1
     }
     }
     \]
which is concentrated in the vertical strip $p\in\{-1,0,1\}$. It is not hard to see that the map $\rd^{-1,2r}_1$ induces an isomorphism
\[
\frac{\rE^{1,2r-2}_2(-1)}{\mu^{0,2r-1}_2\rE^{-1,2r}_2}\simeq
\frac{\IM\rd^{-1,2r}_1}{\IM\(\rd^{-1,2r}_1\circ\rd^{0,2r-2}(-1)\)}.
\]
On the other hand, we have a natural injection
\[
\frac{\rE^{1,2r-2}_2(-1)}{\mu^{0,2r-1}_2\rE^{-1,2r}_2}\hookrightarrow
\rH^1_\sing(K,\rH^{2r-1}(X_{\overline{K}},\Lambda(r)))
\]
which has already appeared in \eqref{eq:singular}. Since $\rA^r(X)^0$ is contained in $\IM\rd^{-1,2r}_1$ by construction, we obtain a natural map $\eta^r$.

Though in Proposition \ref{pr:level_raising} we only have (a,c) in Proposition \ref{pr:potential} and (b) is unclear yet, it still motivates us to define the map $\nabla^0$ as the cokernel of the potential map $\Delta$ in mind. The complication involved in (b) will be addressed in the next section, in which we will obtain a precise characterization of $\rH^1_\sing$.

\begin{example}
We give an example of Proposition \ref{pr:potential} for $X$ being a Mumford curve, that is, every irreducible component of $X_\kappa$ is a projective line over $\kappa$ (so that $r=1$). In this case, the first page $\rE^{p,q}_1$ is even simpler, which is just
     \[
     \boxed{
     \xymatrix{
     q=2 & \rH^0(X^{(1)}_{\ol\kappa},\Lambda) \ar[r]^-{\rd^{-1,2}_1} & \rH^2(X^{(0)}_{\ol\kappa},\Lambda(1)) \ar[r] & 0 \\
     q=1 & 0 \ar[r] & 0 \ar[r] & 0 \\
     q=0 & 0 \ar[r] & \rH^0(X^{(0)}_{\ol\kappa},\Lambda(1)) \ar[r]^-{\rd^{0,0}_1} &
     \rH^0(X^{(1)}_{\ol\kappa},\Lambda(1)) \\
     \rE^{p,q}_1 & p=-1 & p=0 & p=1
     }
     }
     \]

Denote by $\Gamma$ the dual graph of $X_\kappa$, which is a finite oriented graph where the orientation comes from a fixed ordering of the set of irreducible components of $X_\kappa$. Denote by $\cV_\Gamma$ and $\cE_\Gamma$ the sets of vertices and edges of $\Gamma$, identified with the sets of irreducible components of $X^{(0)}_\kappa$ and $X^{(1)}_\kappa$, respectively. We have two maps $v_0,v_1\colon\cE_\Gamma\to\cV_\Gamma$ assigning the starting and the ending vertices, respectively.
\begin{itemize}
  \item Define the ``restriction'' map $\rho\colon\Lambda[\cV_\Gamma]\to\Lambda[\cE_\Gamma]$ to be the one sending a function $F$ to $\rho F$ such that $(\rho F)(e)=F(v_1(e))-F(v_0(e))$.

  \item Define the ``Gysin'' map $\gamma\colon\Lambda[\cE_\Gamma]\to\Lambda[\cV_\Gamma]$ to be the one sending a function $G$ to $\gamma G$ such that $(\gamma G)(v)=\sum_{v_1(e)=v}G(e)-\sum_{v_0(e)=v}G(e)$.
\end{itemize}
Then the maps $\rd^{0,0}_1$ and $\rd^{-1,2}_1$ are canonically identified with $\rho$ and $\gamma$, respectively. Under such identification, $\rA^1(X)^0$ coincides with $\Lambda[\cV_\Gamma]^0$, the submodule of $\Lambda[\cV_\Gamma]$ consisting of functions with total sum zero; $\rA_1(X)_0$ coincides with $\Lambda[\cV_\Gamma]_0$, the quotient of $\Lambda[\cV_\Gamma]$ by constant functions. Then the potential map is simply
\[
\Delta=\gamma\circ\rho\colon\Lambda[\cV_\Gamma]_0\to\Lambda[\cV_\Gamma]^0.
\]
It is clear that the three conditions in Proposition \ref{pr:potential} are all satisfied; hence we have canonical isomorphisms
\begin{align*}
\rH^1_\unr(K,\rH^1(X_{\overline{K}},\Lambda(1)))&=
\ker\(\gamma\circ\rho\colon\Lambda[\cV_\Gamma]_0\to\Lambda[\cV_\Gamma]^0\),\\
\rH^1_\sing(K,\rH^1(X_{\overline{K}},\Lambda(1)))&=
\coker\(\gamma\circ\rho\colon\Lambda[\cV_\Gamma]_0\to\Lambda[\cV_\Gamma]^0\).
\end{align*}
\end{example}

\section{Singular quotient via Galois deformation}\label{S: singular}

Under the assumption of Proposition \ref{pr:level_raising}, we obtain the diagram (with the dashed line in question)
\begin{align}\label{eq:singular_1}
\xymatrix{
\(\dfrac{\rE^{1,2r-2}_{0,2}(-1)}{\mu\rE^{-1,2r}_{0,2}}\)/\fn_0^m \ar[d]\ar[r] &
\rH^1_\sing(\dQ_{p^2},\rH^{2r-1}(\rM'_0\otimes_{\dQ_{p^2}}\ol\dQ_p,\dZ_\lambda(r)))/\fn_0^m \ar[d] \\
\dfrac{\dZ_\lambda[\rS_0^\circ]}{\fn_0^m} \ar@{-->}[r]^-{?} & \rH^1_\sing(\dQ_{p^2},\rH^{2r-1}(\rM'_0\otimes_{\dQ_{p^2}}\ol\dQ_p,\dZ_\lambda(r))/\fn_0^m)
}
\end{align}
by combining with \eqref{eq:singular}. In this section, we explain when all arrows in the above diagram are isomorphisms in the case where $\Pi=\Pi^A$ (hence $\dQ(\Pi)=\dQ$ and $\lambda=\ell$).

Recall that $\rM'_\alpha\simeq\cS_\alpha\otimes_F\dQ_{p^2}$ from Proposition \ref{pr:moduli}(4). We propose one more condition on $\ell$:
\begin{description}
  \item[(L4)] For $\alpha=0,1$, $\rH^i(\widetilde\cS_\alpha\otimes_F\ol\dQ,\dZ_\ell)_{\fn_\alpha}=0$ for $i\neq n_\alpha-1$ and $\rH^{n_\alpha-1}(\widetilde\cS_\alpha\otimes_F\ol\dQ,\dZ_\ell)_{\fn_\alpha}$ is a free $\dZ_\ell$-module of finite rank. Here, $\widetilde\cS_\alpha$ corresponds to an arbitrary open compact subgroup $\widetilde{K}_\alpha\subseteq K_\alpha$ satisfying $\widetilde{K}_\alpha^\Sigma=K_\alpha^\Sigma$.
\end{description}

\begin{proposition}\label{pr:singular}
Let $A=(A_0,A_1)$ be a pair of rational elliptic curves satisfying Theorem \ref{th:sym}(c,d) and assume $F^+/\dQ$ solvable. There exists a positive integer $\ell^A$ depending only on $A$ such that for every prime number $\ell\geq \ell^A$ satisfying (L1--4), if $p$ is a level-raising prime modulo $\ell^m$ (Definition \ref{de:level_raising}) satisfying the extra condition:
\begin{itemize}
  \item[$(*)$] The natural quotient map
     \[
     \frac{\dZ_\ell[\rS_0^\circ]}{\fn_0^m}\to\frac{(\dZ_\ell/\ell^m)[\rS_0^\circ]}{\Ker\phi_0^\Pi}
     \]
     is an isomorphism of nontrivial $\dZ_\ell$-modules.
\end{itemize}
then
\begin{itemize}
  \item all maps in \eqref{eq:singular_1} are isomorphism; and

  \item $\rH^{2r-1}(\cS_0\otimes_F\ol\dQ,\dZ_\ell(r))/\fn_0^m$ is isomorphic to $(\rT^A_{0,\ell}\otimes\dZ/\ell^m)^{\oplus\mu}$ for some $\mu>0$ as a $\dZ_\ell[\Gal(\ol\dQ/F)]$-module.
\end{itemize}
\end{proposition}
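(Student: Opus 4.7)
The plan is to first establish the Galois-module description in the second bullet, then deduce the isomorphisms in the first bullet via a local computation of $\rH^1_\sing$ compared against the surjection of Proposition \ref{pr:level_raising}.

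For the Galois-module statement, the key input is an integral ``$R = T$'' (or at least ``$R \twoheadrightarrow T$ is an isomorphism on the module side'') theorem for the localized cohomology of $\cS_0$. First I would use (L3) to see that the residual representation $\ol\rT^A_{0,\ell}$ of $\Gal(\ol\dQ/F)$ is absolutely irreducible, and (L4) to see that $\rH^{n-1}(\cS_0\otimes_F\ol\dQ,\dZ_\ell(r))_{\fn_0}$ is a finite free $\dZ_\ell$-module. Solvability of $F^+/\dQ$ (together with the Arthur--Clozel cyclic base-change machinery already used to produce $\Pi^A$) puts us in the setting where the Taylor--Wiles--Kisin patching method for unitary similitude groups applies, in the form developed by Clozel--Harris--Taylor, Thorne, and Caraiani--Newton--Thorne. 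The upshot of that patching, for all $\ell$ larger than an explicit constant $\ell^A$ absorbing the ``big image'', ``$\ell$ adequacy'' and local deformation-ring hypotheses, is that the localized cohomology is a free module over the corresponding Hecke algebra and that the Galois action factors through the universal deformation. Concretely, $\rH^{2r-1}(\cS_0\otimes_F\ol\dQ,\dZ_\ell(r))/\fn_0^m$ is isomorphic, compatibly with $\Gal(\ol\dQ/F)$, to $(\rT^A_{0,\ell}\otimes\dZ/\ell^m)^{\oplus\mu}$, and the multiplicity $\mu$ is the $\dZ_\ell/\ell^m$-length of $\dT^{\Sigma\cup\{p\}}_0/\fn_0^m$ divided by $m$; condition $(*)$ ensures that this quotient is nontrivial, so $\mu>0$.

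For the diagram \eqref{eq:singular_1}, I would then compute $\rH^1_\sing(\dQ_{p^2},\rT^A_{0,\ell}\otimes\dZ/\ell^m)$ explicitly using the local structure of $\rT^A_{0,\ell}$ at $\fp$. The level-raising congruence (P3), namely $a_p(A_0)^2\equiv(p+1)^2\pmod{\ell^m}$, exactly places the characteristic polynomial of $\Frob_\fp$ on $\rT^A_{0,\ell}/\ell^m$ in the congruence position that makes the unramified and singular pieces each a free $\dZ/\ell^m$-module of rank one (compare Lemma \ref{le:local_galois}); the conditions (P4) and (P5) together with $\ell\nmid p(p^2-1)$ ensure no other Jordan blocks contribute. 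Tensoring with the multiplicity space of rank $\mu$ and using the freeness of $\rH^{2r-1}(\cS_0\otimes_F\ol\dQ,\dZ_\ell(r))/\fn_0^m$ just established, the right-hand vertical map of \eqref{eq:singular_1} becomes an isomorphism, and the top-right term becomes a free $(\dT^{\Sigma\cup\{p\}}_0/\fn_0^m)$-module of rank $\mu$; condition $(*)$ then identifies this with $(\dZ_\ell[\rS_0^\circ]/\fn_0^m)^{\oplus\mu/\mu}=\dZ_\ell[\rS_0^\circ]/\fn_0^m$, up to the multiplicity space, matching the bottom-left term. The top horizontal arrow is the composite of Proposition \ref{pr:level_raising} (which is surjective) with the natural map from \eqref{eq:singular}; since source and target have now been shown to have the same finite $\dZ_\ell$-length, surjectivity upgrades to an isomorphism, and the remaining arrow in the square is then forced to be an isomorphism.

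The main obstacle is the integral modularity step of the first paragraph: one has to carry out Taylor--Wiles patching with the auxiliary prime $\fp$ included in the level, matching its ``ramified principal series / level-raising'' local deformation condition to the congruence (P3), and simultaneously verify adequacy of the residual representation of $\Gal(\ol\dQ/F)$ and formal smoothness (after inverting $\ell$) of the local deformation rings at primes in $\Sigma\cup\{\ell\}$. Large $\ell^A$ is precisely what guarantees that all these auxiliary conditions can be met uniformly, and this is why the final bound on $\ell$ depends only on $A$.
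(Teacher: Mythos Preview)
Your overall architecture is the same as the paper's: invoke an $R=\sfT$ and freeness result (the paper cites \cite{LTXZZ2}*{Theorem~3.6.3}) to control the localized cohomology, then squeeze the top-left corner of \eqref{eq:singular_1} between the surjection of Proposition~\ref{pr:level_raising} and the injection from \eqref{eq:singular}, and finish by a length count. But the length count does not close without an additional input that you have omitted.

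The gap is the comparison of multiplicities. Your $\mu$ is the rank of the Shimura-variety cohomology over the ramified Hecke algebra (what the paper calls $d^\ram$, the $\sfR^\ram$-rank of $\sfH$). The bottom-left corner $\dZ_\ell[\rS_0^\circ]/\fn_0^m$ has $\dZ/\ell^m$-rank equal to the multiplicity on the \emph{definite} side (the paper's $d^\unr$, the $\sfR^\unr$-rank of $\dZ_\ell[\rS_0^\circ]_{\fn_0}$). Your sentence ``condition $(*)$ then identifies this with $(\dZ_\ell[\rS_0^\circ]/\fn_0^m)^{\oplus\mu/\mu}$\dots matching the bottom-left term'' is not an argument: nothing you have written forces $d^\unr=d^\ram$. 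The squeeze only gives $d^\unr\leq d^\ram$; equality is a genuinely separate fact, proved in the paper by specializing both free modules to a characteristic-zero point and comparing automorphic multiplicities on the two inner forms (the proof of \cite{LTXZZ}*{Proposition~6.4.1}). Without that step your length count cannot conclude that the surjection is an isomorphism.

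Two smaller points. First, your formula ``$\mu$ is the $\dZ_\ell/\ell^m$-length of $\dT^{\Sigma\cup\{p\}}_0/\fn_0^m$ divided by $m$'' is wrong: that quotient is just $\dZ/\ell^m$, so your formula gives $\mu=1$; the correct $\mu$ is the Hecke-rank $d^\ram$. Second, your argument that the right vertical map in \eqref{eq:singular_1} is an isomorphism is too quick: the injection in \eqref{eq:singular} holds before reducing modulo $\fn_0^m$, and you need to know that $\rH^1_\sing$ commutes with this base change. The paper handles this by computing $\rH^1_\sing$ of the full localized module as $\sfH\otimes_{\sfR^\ram}\sfR^{\r{cong}}$ via the explicit local deformation-ring description (the elements $\sfx,\sfs$), which is what makes the base-change step transparent.
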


Of course, the whole statement only depends on the factor $A_0$ of the pair. However, we still write in this slightly redundant way for the sake of unity in notation.

The proof of Proposition \ref{pr:singular} uses the theory of Galois deformation (of the residual representation $\rT^A_{0,\ell}\otimes\dF_\ell$). We fix a prime number $p$ as in the statement and start from the local Galois deformation at the place $\fp$ of $F$. Let $\rI_\fp$ be the inertia subgroup of the decomposition subgroup $\rG_\fp\coloneqq\Gal(\ol\dQ_p/F_\fp)\subseteq\Gal(\ol\dQ/F)$. Fix an arithmetic Frobenius element $\phi_\fp$ of $\rG_\fp$ and a topological generator $t$ of the pro-$\ell$ tame inertia group at $\fp$. By (P3, P4), for every Artinian local $\dZ_\ell$-algebra $(R,\fm_R)$ with $R/\fm_R=\dF_\ell$ and every conjugate self-dual lifting $\rT$ of $\rT^A_{0,\ell}\otimes\dF_\ell$ as an $R[\rG_\fp]$-module, there is a unique decomposition $\rT=\rT^\dag\oplus\rT^\ddag$ into free $R$-modules such that the characteristic polynomial of the action of $\phi_\fp$ on $\rT^\dag$ equals $(T-1)(T-p^2)$ modulo $\fm_R$. Define
\begin{itemize}
  \item $\sD_\fp^\mix$ to be the local deformation problem at $\fp$ parameterizing $\rT$ such that $\rI_\fp$ preserves the decomposition $\rT=\rT^\dag\oplus\rT^\ddag$ and acts trivially on $\rT^\ddag$;

  \item $\sD_\fp^\ram$ to be the local deformation problem at $\fp$ contained in $\sD_\fp^\mix$ such that the characteristic polynomial of the action of $\phi_\fp$ on $\rT^\dag$ equals $(T-1)(T-p^2)$;

  \item $\sD_\fp^\unr$ to be the local deformation problem at $\fp$ contained in $\sD_\fp^\mix$ on which the action of $\rI_\fp$ is trivial.
\end{itemize}
Globally, for $?=\mix,\ram,\unr$, we define $\sfR^?$ to be the local $\dZ_\ell$-algebra that parameterizes conjugate self-dual liftings of $\rT^A_{0,\ell}\otimes\dF_\ell$ as $\Gal(\ol\dQ/F)$-modules that are unramified at primes not above $\Sigma\cup\{\ell\}$ and not $\fp$, are Fontaine--Laffaille at primes above $\ell$, and belong to $\sD_\fp^?$ at $\fp$. Denote by $\rT^?_\univ$ the universal lifting on a free $\sfR^?$-module of rank $n_0$. Finally, put
\[
\sfR^{\r{cong}}\coloneqq\sfR^\unr\otimes_{\sfR^\mix}\sfR^\ram.
\]

We fix eigenvectors $\bar\sfv$ and $\bar\sfv'$ in $\rT^A_{0,\ell}\otimes\dF_\ell$ for $\phi_\fp$ with the eigenvalues $1$ and $p^2$, respectively. By Hensel's lemma, $\bar\sfv$ and $\bar\sfv'$ lift uniquely to eigenvectors in $\rT^\unr_\univ$ for $\phi_\fp$, whose eigenvalues we denote by $\sfs$ and $\sfs'$, respectively, which are elements of $\sfR^\mix$. Moreover, we may write $t\cdot\sfv=\sfx\sfv'+\sfv$ for a unique element $\sfx\in\sfR^\mix$. Then we have $\sfx(\sfs-1)=0$ and
\[
\sfR^\unr=\sfR^\mix/(\sfx),\quad
\sfR^\ram=\sfR^\mix/(\sfs-1),\quad
\sfR^{\r{cong}}=\sfR^\mix/(\sfs-1,\sfx).
\]

Let $\sfT^\unr$ be the image of $\dT^\Sigma_0$ in $\End_{\dZ_\ell}(\dZ_\ell[\rS_0^\circ])$, so that $\sfT^\unr_{\fn_0}$ is nonzero by $(*)$. We show in \cite{LTXZZ2} that there is a canonical homomorphism $\sfR^\unr\to\sfT^\unr_{n_0}$. Moreover, there exists a spherical Hecke operator $\tN$ at $\fp^+$ belonging to $\fn_0^m$ such that its action on $\dZ_\ell[\rS_0^\circ]_{\fn_0}$ coincides with $\sfs-1$;\footnote{Indeed, $\tN$ is the same operator in the proof of Proposition \ref{pr:level_raising}.} in particular, we have
\begin{align}\label{eq:deformation_1}
\dZ_\ell[\rS_0^\circ]_{\fn_0}\otimes_{\sfR^\unr}\sfR^{\r{cong}}=
\frac{\dZ_\ell[\rS_0^\circ]_{\fn_0}}{\tN\dZ_\ell[\rS_0^\circ]_{\fn_0}}.
\end{align}

On the other hand, let $\sfT^\ram$ be the image of $\dT^{\Sigma\cup\{p\}}_0$ in $\End_{\dZ_\ell}(\rH^{2r-1}(\rM'_0\otimes_{\dQ_{p^2}}\ol\dQ_p,\dZ_\ell(r)))$. By $(*)$ and Proposition \ref{pr:level_raising}, $\sfT^\ram$ is nonzero as well. We show in \cite{LTXZZ2} that there is a canonical homomorphism $\sfR^\ram\to\sfT^\ram_{n_0}$. Put
\[
\sfH\coloneqq\Hom_{\sfR^\ram[\Gal(\ol\dQ/F)]}
\(\rT^\ram_\univ,\rH^{2r-1}(\rM'_0\otimes_{\dQ_{p^2}}\ol\dQ_p,\dZ_\ell(r))_{\fn_0}\),
\]
which is naturally an $\sfR^\ram$-module. Suppose that the tautological map
\begin{align}\label{eq:deformation}
\sfH\otimes_{\sfR^\ram}\rT^\ram_\univ\to\rH^{2r-1}(\rM'_0\otimes_{\dQ_{p^2}}\ol\dQ_p,\dZ_\ell(r))_{\fn_0}
\end{align}
is an isomorphism. Then we have
\begin{align}\label{eq:deformation_2}
&\rH^1_\sing(\dQ_{p^2},\rH^{2r-1}(\rM'_0\otimes_{\dQ_{p^2}}\ol\dQ_p,\dZ_\ell(r))_{\fn_0})
=\sfH\otimes_{\sfR^\ram}\rH^1_\sing(\dQ_{p^2},\rT^\ram_\univ) \\
&\quad=\sfH\otimes_{\sfR^\ram}(\sfR^\ram\sfv/\sfx\sfv)
\simeq\sfH\otimes_{\sfR^\ram}\sfR^\ram/(\sfx)
=\sfH\otimes_{\sfR^\ram}\sfR^{\r{cong}}.\notag
\end{align}
By (L4) and \cite{LTXZZ2}*{Theorem~3.6.3}, there exists a positive integer $\ell^A$ such that for every prime number $\ell\geq \ell^A$ satisfying (L1--4), the following holds:
\begin{itemize}
  \item The homomorphism $\sfR^\unr\to\sfT^\unr_{n_0}$ is an isomorphism; and $\dZ_\ell[\rS_0^\circ]_{\fn_0}$ is a free $\sfR^\unr$-module of finite rank.

  \item The homomorphism $\sfR^\ram\to\sfT^\ram_{n_0}$ is an isomorphism; both $\rH^{2r-1}(\rM'_0\otimes_{\dQ_{p^2}}\ol\dQ_p,\dZ_\ell(r))_{\fn_0}$ and $\sfH$ are free $\sfR^\ram$-module of finite ranks; and the tautological map \eqref{eq:deformation} is an isomorphism.
\end{itemize}
Let $d^\unr$ and $d^\ram$ be the ranks of $\dZ_\ell[\rS_0^\circ]_{\fn_0}$ and $\sfH$ over $\sfR^\unr$ and $\sfR^\ram$, respectively.

\begin{proof}[Proof of Proposition \ref{pr:singular}]
Let $\ell$ be a prime number that is larger than $\ell^A$ above and satisfies (L1--4). Indeed, as pointed out in the proof of Proposition \ref{pr:level_raising}, the map
\[
\(\frac{\rE^{1,2r-2}_{0,2}(-1)}{\mu\rE^{-1,2r}_{0,2}}\)_{\fn_0}
\to\frac{\dZ_\ell[\rS_0^\circ]_{\fn_0}}{\tN\dZ_\ell[\rS_0^\circ]_{\fn_0}}
\]
is already surjective. On the other hand, we have an injective map
\[
\(\frac{\rE^{1,2r-2}_{0,2}(-1)}{\mu\rE^{-1,2r}_{0,2}}\)_{\fn_0}
\to\rH^1_\sing(\dQ_{p^2},\rH^{2r-1}(\rM'_0\otimes_{\dQ_{p^2}}\ol\dQ_p,\dZ_\ell(r))_{\fn_0})
\]
from \eqref{eq:singular}. Thus, by \eqref{eq:deformation_1} and \eqref{eq:deformation_2}, the proposition can be reduced to showing that $d^\unr=d^\ram$ since $\sfR^{\r{cong}}$ is a finite ring. To compare $d^\unr$ and $d^\ram$, we specialize both modules to a complex point and compare them using automorphic methods, whose details can be found in the proof of \cite{LTXZZ}*{Proposition~6.4.1}.
\end{proof}

\begin{remark}
When we compare $d^\unr$ and $d^\ram$ in the proof of \cite{LTXZZ}*{Proposition~6.4.1}, we require $K_v$ to be small enough (or \emph{transferable} \cite{LTXZZ}*{Definition~D.2.1} to be more precise) for $v$ above $\Sigma$. In this remark, we point out that this is unnecessary. Indeed, for every $v$ above $\Sigma$, we can find a normal subgroup $\widetilde{K}_v\subseteq K_v$ of finite index such that $\widetilde{K}_v$ is transferable; put $\widetilde{K}\coloneqq\(\prod_{v\mid\Sigma}\widetilde{K}_v\)K^\Sigma$. The Hoschchild--Serre spectral sequence together with (L4) imply that
\[
\rH^{2r-1}(\cS_0\otimes_F\ol\dQ,\dZ_\ell(r))_{\fn_0}=
\rH^{2r-1}(\widetilde\cS_0\otimes_F\ol\dQ,\dZ_\ell(r))_{\fn_0}^{K/\widetilde{K}}.
\]
Thus, the relation $d^\unr=d^\ram$ for $\widetilde{K}$ implies that for $K$.
\end{remark}

\section{Explicit reciprocity law}
\label{ss:8}

In this section, we combine the arithmetic input in the previous two sections to deduce an explicit reciprocity law for the diagonal cycle in the product $\rM'_0\times\rM'_1$. Together with the automorphic input, we show that a system of Selmer annihilators for $\rT^A_\ell$ (Definition \ref{de:annihilator}) exists for all but finitely many prime numbers $\ell$ in the situation of Theorem \ref{th:sym}, hence finish the proof of that theorem. Recall that we have assumed $F^+\neq\dQ$ and $n=2r$ even.

Take prime numbers $\ell\geq \ell^A$ satisfying (L1--4) and a level-raising prime $p$ modulo $\ell^m$ (Definition \ref{de:level_raising}) satisfying:
\begin{itemize}
  \item[(P6)] For $\alpha=0,1$, the natural quotient map
     \[
     \frac{\dZ_\ell[\rS_\alpha^\circ]}{\fn_\alpha^m}\to
     \frac{(\dZ_\ell/\ell^m)[\rS_\alpha^\circ]}{\Ker\phi_\alpha^\Pi}
     \]
     is an isomorphism of nontrivial $\dZ_\ell$-modules.
\end{itemize}

Put $\bP\coloneqq\bM_0\otimes_{\dZ_{p^2}}\bM_1$, $\rP'\coloneqq\bP\otimes_{\dZ_{p^2}}\dQ_{p^2}$, and $\rP\coloneqq\bP\otimes_{\dZ_{p^2}}\dF_{p^2}$. Let $\bD$ be the graph of the morphism $\bm\colon\bM_0\to\bM_1$, which is a closed subscheme of $\bP$ of codimension $n$; similarly, we have $\rD'$ and $\rD$. Put $\cP\coloneqq\cS_0\otimes_F\cS_1$ and let $\cD$ be the graph of the morphism $\sigma$ (see Proposition \ref{pr:moduli}(4)) so that the base change of the pair $(\cP,\cD)$ from $F$ to $F_\fp=\dQ_{p^2}$ coincides with $(\rP',\rD')$.

Put $\fn^m\coloneqq\fn_0^m\times\fn_1^m$. Then (L4) implies that $\rH^i(\cP\otimes_F\ol\dQ,\dZ_\ell)_{\fn}=0$ for $i\neq 2n-1$. In particular, we have an induced class
\[
\alpha(\cD)\in\rH^1(F,\rH^{2n-1}(\cP\otimes_F\ol\dQ,\dZ_\ell(n))/\fn^m)
\]
via the Hoschchild--Serre spectral sequence. Denote by
\[
\partial_\fp\colon\rH^1(F_\fp,-)\to\rH^1_\sing(F_\fp,-)=\rH^1_\sing(\dQ_{p^2},-)
\]
the singular quotient map. Then we have the element
\[
\partial_\fp(\loc_\fp(\alpha(\cD)))\in
\rH^1_\sing(\dQ_{p^2},\rH^{2n-1}(\rP'\otimes_{\dQ_{p^2}}\ol\dQ_p,\dZ_\ell(n))/\fn^m).
\]

\begin{proposition}[First explicit reciprocity law]\label{pr:reciprocity}
Under the situation above, the following holds.
\begin{enumerate}
  \item The $\dZ/\ell^m[\Gal(\ol\dQ/F)]$-module $\rH^{2n-1}(\cP\otimes_F\ol\dQ,\dZ_\ell(n))/\fn^m$ is nonzero and isomorphic to the direct sum of finitely many modules of the form $\rT^A_\ell\otimes\dZ/\ell^{m'}$ with $m'\leq m$.

  \item The natural map
     \begin{align*}
     &\rH^1_\sing(\dQ_{p^2},\rH^{2r-1}(\rM'_0\otimes_{\dQ_{p^2}}\ol\dQ_p,\dZ_\ell(r))/\fn_0^m)
     \otimes_{\dZ_\ell}\rH^0(\dQ_{p^2},\rH^{2r}(\rM'_1\otimes_{\dQ_{p^2}}\ol\dQ_p,\dZ_\ell(r))/\fn_1^m) \\
     &\quad\to\rH^1_\sing(\dQ_{p^2},\rH^{2n-1}(\rP'\otimes_{\dQ_{p^2}}\ol\dQ_p,\dZ_\ell(n))/\fn^m)
     \end{align*}
     is an isomorphism. In particular, we have a natural isomorphism
     \[
     \nabla_{/\fn^m}\colon\rH^1_\sing(\dQ_{p^2},\rH^{2n-1}(\rP'\otimes_{\dQ_{p^2}}\ol\dQ_p,\dZ_\ell(n))/\fn^m)
     \xrightarrow\sim\frac{(\dZ_\ell/\ell^m)[\rS_0^\circ]}{\Ker\phi_0^\Pi}\otimes
     \frac{(\dZ_\ell/\ell^m)[\rS_1^\circ]}{\Ker\phi_1^\Pi}
     \]
     from Proposition \ref{pr:tate}, Proposition \ref{pr:singular}, and (P6).

  \item Under the natural pairing
     \[
     \frac{(\dZ_\ell/\ell^m)[\rS_0^\circ]}{\Ker\phi_0^\Pi}\otimes
     \frac{(\dZ_\ell/\ell^m)[\rS_1^\circ]}{\Ker\phi_1^\Pi}\times
     (\dZ_\ell/\ell^m)[\rS_0^\circ][\phi_0^\Pi]\otimes(\dZ_\ell/\ell^m)[\rS_1^\circ][\phi_1^\Pi]
     \to\dZ/\ell^m,
     \]
     the paring between $\nabla_{/\fn^m}(\partial_\fp(\loc_\fp(\alpha(\cD))))$ and every pure tensor $f_0\otimes f_1\in(\dZ_\ell/\ell^m)[\rS_0^\circ][\phi_0^\Pi]\otimes(\dZ_\ell/\ell^m)[\rS_1^\circ][\phi_1^\Pi]$ is equal to
     \[
     \gamma^A_p\cdot\sum_{\rU(V)(F^+)\backslash\rU(V)(\dA^\infty_{F^+})/K}f_0(h)f_1(h),
     \]
     where $\gamma^A_p$ is an element in $\dZ_{(\ell)}^\times$ depending only on $p$ and $A$ (or rather $a_p(A_0)$ and $a_p(A_1)$).
\end{enumerate}
\end{proposition}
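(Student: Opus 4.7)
\medskip

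\noindent\textbf{Proof proposal.} My plan is to reduce everything to the K\"unneth decomposition of $\cP=\cS_0\otimes_F\cS_1$ and then identify each factor using the two arithmetic results already established (Propositions \ref{pr:tate} and \ref{pr:singular}), so that the cycle-class computation becomes a transparent exercise in the moduli-theoretic picture.

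For part (1), I would start with the K\"unneth isomorphism
\[
\rH^{2n-1}(\cP\otimes_F\ol\dQ,\dZ_\ell(n))=\bigoplus_{i+j=2n-1}
\rH^i(\cS_0\otimes_F\ol\dQ,\dZ_\ell(r))\otimes_{\dZ_\ell}\rH^j(\cS_1\otimes_F\ol\dQ,\dZ_\ell(r)).
\]
Localizing at $\fn=\fn_0\times\fn_1$ and invoking (L4), the left factor is concentrated in degree $i=n_0-1=n-1$ and the right factor in degree $j=n_1-1=n$; hence only the single summand with $(i,j)=(n-1,n)$ survives. Proposition \ref{pr:singular} identifies $\rH^{n-1}(\cS_0\otimes_F\ol\dQ,\dZ_\ell(r))_{\fn_0}/\fn_0^m$ with a direct sum of copies of $\rT^A_{0,\ell}\otimes\dZ/\ell^m$, and the analogous statement for $\alpha=1$ (which is in fact simpler, since Proposition \ref{pr:tate} together with freeness forces $\rH^n(\cS_1\otimes_F\ol\dQ,\dZ_\ell(r))_{\fn_1}/\fn_1^m$ to be a sum of copies of $\rT^A_{1,\ell}\otimes\dZ/\ell^m$ by the compatibility with $\rV^{\Pi^A}_{1,\ell}$) supplies the other tensor factor. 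Tensoring gives the desired decomposition of $\rH^{2n-1}/\fn^m$ as a sum of copies of $\rT^A_\ell\otimes\dZ/\ell^{m'}$; nonvanishing follows from (P6).

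For part (2), I would apply the K\"unneth formula to the weight spectral sequences of $\bM_0$ and $\bM_1$ and use the fact that, after localizing at $\fn_\alpha^m$, the sequence for $\bM_1$ degenerates on the second page with a single surviving term in bidegree $(0,2r)$ (Proposition \ref{pr:tate}(1)), while the sequence for $\bM_0$ survives in the three horizontal terms described in Section \ref{S: singular}. Consequently the $\rH^1_\sing$ contribution for the product comes entirely from the $\rH^1_\sing$ of $\bM_0$ tensored with the $\rH^0$ of $\bM_1$, and one obtains the stated K\"unneth-type isomorphism. Combining this with the explicit descriptions
\[
\rH^1_\sing(\dQ_{p^2},\rH^{2r-1}(\rM'_0\otimes\ol\dQ_p,\dZ_\ell(r))/\fn_0^m)
\simeq\dZ_\ell[\rS_0^\circ]/\fn_0^m
\]
from Proposition \ref{pr:singular} (which uses the surjectivity of Proposition \ref{pr:level_raising} together with the Galois-deformation-theoretic argument of Section \ref{S: singular}) and
\[
\rH^0(\dQ_{p^2},\rH^{2r}(\rM'_1\otimes\ol\dQ_p,\dZ_\ell(r))/\fn_1^m)\simeq\dZ_\ell[\rS_1^\circ]/\fn_1^m
\]
from Proposition \ref{pr:tate}, together with (P6), produces the isomorphism $\nabla_{/\fn^m}$.

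For part (3), I would compute the image of the Abel--Jacobi class $\alpha(\cD)$ under $\partial_\fp\circ\loc_\fp$ via the geometry of the special fiber. The diagonal $\cD\subset\cP$ is the graph of $\sigma\colon\cS_0\to\cS_1$, whose reduction modulo $\fp$ is the graph of $\rm\colon\rM_0\to\rM_1$. Using the interpretation of $\rH^1_\sing$ via the weight spectral sequence (the exact sequence \eqref{eq:singular}), the singular part of the localized Abel--Jacobi class is computed by pulling the cycle class of $\graph(\rm^\bullet)\subset\rM_0^\bullet\times\rM_1^\bullet$ (and its boundary contribution from the $\dag$-stratum) through the maps $\Inc^0_\bullet,\Inc^0_\circ,\Inc^1_\bullet,\Inc^1_\dag$ together with $\tT^{\circ\bullet}_\alpha,\tT^{\bullet\circ}_\alpha$ defined in Notation \ref{no:intertwining}. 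The commutativity of \eqref{eq:bullet} and the identification $\rm^\circ$ with $\rs^\circ$ via $\pi_\alpha^\circ$ (Proposition \ref{pr:moduli}(3)) convert the tensor product $(\nabla^0\otimes\nabla^1)\circ(\partial_\fp\circ\loc_\fp)(\alpha(\cD))$, when paired against $f_0\otimes f_1\in(\dZ/\ell^m)[\rS_0^\circ][\phi_0^\Pi]\otimes(\dZ/\ell^m)[\rS_1^\circ][\phi_1^\Pi]$, into the pushforward along the diagonal $\rs^\circ\colon\rS_0^\circ\to\rS_1^\circ$, which is exactly $\sum_{h\in\rU(V)(F^+)\backslash\rU(V)(\dA^\infty_{F^+})/K}f_0(h)f_1(h)$, multiplied by a universal factor $\gamma^A_p\in\dZ_{(\ell)}^\times$ coming from the combinatorial constants in the definitions of $\nabla^0$, $\nabla^1$ (the factors $(p+1)$ and $(p+1)^2$) together with the trace of Frobenius on the residual $\phi_\fp$-eigenspaces, which involves only $a_p(A_0)$ and $a_p(A_1)$.

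\medskip

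\noindent\textbf{Main obstacle.} The hard step is part (3): one must track the diagonal cycle through the whole web of intermediate strata, intertwiners, and monodromy operators in a way that is compatible with \emph{both} the $\alpha=0$ spectral sequence (which contributes the singular quotient via the nontrivial monodromy) and the $\alpha=1$ spectral sequence (which contributes only the unramified piece by Proposition \ref{pr:tate}), and finally prove that the matrix coefficient produced equals the GGP period up to a unit. Ensuring that the constant $\gamma^A_p$ actually lies in $\dZ_{(\ell)}^\times$ (rather than merely in $\dZ_\ell$) forces one to invoke the exclusions in (P4), (P5), (L2), and the divisibility hypotheses on $\ell$ that guarantee all auxiliary Hecke-algebraic operations remain invertible modulo $\ell$.
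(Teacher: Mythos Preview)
Your overall strategy matches the paper's: K\"unneth plus (L4) to isolate the single bidegree, Proposition \ref{pr:singular} for the $\alpha=0$ factor, and then the $\alpha=1$ factor handled separately; parts (2) and (3) are likewise argued as in \cite{LTXZZ}*{Theorem~7.2.8}, to which the paper simply defers.

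There is one gap in your treatment of part (1) on the $\alpha=1$ side. You assert that Proposition \ref{pr:tate} ``together with freeness'' forces $\rH^n(\cS_1\otimes_F\ol\dQ,\dZ_\ell(r))/\fn_1^m$ to be a direct sum of copies of $\rT^A_{1,\ell}\otimes\dZ/\ell^m$. Proposition \ref{pr:tate} only controls the Galois \emph{invariants} of the localized cohomology; it does not by itself give the module-theoretic decomposition you want. The missing ingredient is (L3): residual absolute irreducibility of $\rT^A_{1,\ell}\otimes\dF_\ell$ is what guarantees that any finitely generated $\dZ/\ell^m$-module with this Galois action splits as a direct sum of pieces $\rT^A_{1,\ell}\otimes\dZ/\ell^{m'}$. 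Moreover, without an $R=T$ result on the $\alpha=1$ side (none is stated in this survey), you cannot conclude that every summand has exponent exactly $m$; you only get $m'\leq m$, which is precisely why the proposition is phrased that way. The paper makes both points explicit: it invokes (L3) for the decomposition and allows $m'\leq m$, with nonvanishing coming from Proposition \ref{pr:tate} and (P6). Once you insert (L3) and relax to $m'\leq m$, your argument for (1) is complete and identical to the paper's.
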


\begin{proof}
For (1), by \cite{LTXZZ}*{Proposition~3.2.11}, we have an isomorphism
\[
\rH^n(\cS_1\otimes_F\ol\dQ,\dQ_\ell(r))/\phi^\Pi_1\simeq(\rT^A_{1,\ell}\otimes_{\dZ_\ell}\dQ_\ell)^{\oplus\mu}
\]
of $\dQ_\ell[\Gal(\ol\dQ/F)]$-modules for some integer $\mu\geq 0$.

By (L3), $\rT^A_{1,\ell}\otimes\dF_\ell$ is absolutely irreducible, which implies that $\rH^n(\cS_1\otimes_F\ol\dQ,\dQ_\ell(r))/\fn^m_1$ is isomorphic to the direct sum of finitely many modules of the form $\rT^A_\ell\otimes\dZ/\ell^{m'}$ with $m'\leq m$. By Proposition \ref{pr:tate} and (P6), we know that $\rH^n(\cS_1\otimes_F\ol\dQ,\dQ_\ell(r))/\fn^m_1$ is nonzero. Then (1) follows from (L4), Proposition \ref{pr:singular}, and the K\"{u}nneth formula.

Part (2) is deduced in the proof of \cite{LTXZZ}*{Theorem~7.2.8}. Part (3) is simply \cite{LTXZZ}*{Theorem~7.2.8(3)}, where one can find the explicit formula for $\gamma^A_p$.
\end{proof}

\begin{corollary}
Suppose that $L(0,\rV^A_F)$ does not vanish. Then for every prime number $\ell\geq \ell^A$ satisfying (L1--4), a system of Selmer annihilators for $\rT^A_\ell$ (Definition \ref{de:annihilator}) exists.
\end{corollary}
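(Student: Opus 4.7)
The plan is to realize each $c_{m,p}$ as (a component of) the Abel--Jacobi class of the diagonal cycle $\cD \subseteq \cP = \cS_0 \otimes_F \cS_1$ from Section~\ref{ss:8}, and to verify the three defining properties of Definition~\ref{de:annihilator} using the smooth/semistable geometry of $\cP$ together with the explicit reciprocity law. First I would convert the analytic hypothesis into an automorphic one: since $L(s,\rV^A_F) = L(s+\tfrac12,\Pi^A)$, the assumption $L(0,\rV^A_F)\neq 0$ gives $L(\tfrac12,\Pi^A)\neq 0$, so Proposition~\ref{pr:ggp} applied to $\Pi^A$ produces, once and for all, a standard definite hermitian space $V$ of rank $n$, a $\Sigma$-level pair $(K,K^\sharp)$ with $\Sigma = \Sigma(\Pi^A) \subseteq \Sigma^A$, and test vectors $f_\alpha \in \dZ[\rS^\circ_\alpha][\phi^{\Pi^A}_\alpha]$ whose period
\[
I(f_0,f_1) \coloneqq \sum_{\rU(V)(F^+)\backslash\rU(V)(\dA_{F^+}^\infty)/K} f_0(h)\,f_1(h)
\]
is nonzero. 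I would set $m_\ell \coloneqq v_\ell(I(f_0,f_1))$; this is a nonnegative integer depending only on the fixed choice.

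Next, fix an integer $m > m_\ell$ and a level-raising prime $p$ modulo $\ell^m$ satisfying in addition the auxiliary condition~(P6) from Section~\ref{ss:8}. Running the construction of Section~\ref{ss:4} with $(V,K,K^\sharp)$ and $p$, I would form globally $\cP = \cS_0 \otimes_F \cS_1$ together with the graph $\cD \subseteq \cP$ of $\sigma$. By Proposition~\ref{pr:reciprocity}(1), combined with (L4) and the K\"unneth formula, there is an isomorphism
\[
\rH^{2n-1}(\cP \otimes_F \ol\dQ,\dZ_\ell(n))/\fn^m \;\cong\; \bigoplus_{j}\rT^A_\ell \otimes \dZ/\ell^{m_j}, \qquad m_j \leq m,
\]
of $\dZ/\ell^m[\Gal(\ol\dQ/F)]$-modules, while all other $\rH^i/\fn^m$ vanish. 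The Hochschild--Serre spectral sequence then yields a class $\alpha(\cD) \in \rH^1(F,\rH^{2n-1}(\cP\otimes_F\ol\dQ,\dZ_\ell(n))/\fn^m)$ decomposing as $(\alpha_j)_j$. By Proposition~\ref{pr:reciprocity}(2,3) one computes
\[
\langle \nabla_{/\fn^m}(\partial_\fp \loc_\fp \alpha(\cD)),\,f_0 \otimes f_1 \rangle \;=\; \gamma^A_p \cdot I(f_0,f_1) \in \dZ/\ell^m,
\]
and since $\gamma^A_p \in \dZ_{(\ell)}^\times$ this element has $\ell$-adic valuation exactly $m_\ell$. A direct analysis of the decomposition then forces some index $j_0$ with $m_{j_0} \geq m - m_\ell$ such that $\partial_\fp \loc_\fp \alpha_{j_0}$ has order at least $\ell^{m - m_\ell}$ in $\rH^1_\sing(F_\fp, \rT^A_\ell \otimes \dZ/\ell^{m_{j_0}})$. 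I would define $c_{m,p}$ as the image of $\alpha_{j_0}$ under the canonical inclusion $\rT^A_\ell \otimes \dZ/\ell^{m_{j_0}} \hookrightarrow \rT^A_\ell \otimes \dZ/\ell^m$ (multiplication by $\ell^{m - m_{j_0}}$), which preserves the order of elements.

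It remains to verify conditions (a)--(c) of Definition~\ref{de:annihilator}. For~(c): the moduli $\cS_\alpha$ has smooth projective reduction at every place of $F$ not above $\Sigma \cup \{p\} \subseteq \Sigma^A \cup \{p\}$, so for $v \neq \fp$ not above $\Sigma^A \cup \{\ell\}$ the class $\loc_v \alpha(\cD)$ is unramified by smooth and proper base change, and this is inherited by each $\alpha_j$ and hence by $c_{m,p}$. For~(a): at $v \mid \ell$, good reduction of $\cP$ at $v$ (recall $\ell \notin \Sigma^A$ by (L1)) makes the Hochschild--Serre image of any cycle class crystalline. Condition~(b) is precisely the order estimate established in the previous paragraph.

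The step I expect to require the most care is ensuring that the auxiliary condition~(P6) holds for all but finitely many level-raising primes $p$ modulo $\ell^m$. Conditions (P1)--(P5) follow from Chebotarev using (L2) and (L3), exactly as in the proof of Proposition~\ref{pr:kolyvagin}. Condition~(P6) is instead a deformation-theoretic constraint on the mod-$\ell^m$ Hecke algebra: following the framework of Section~\ref{S: singular} (in particular the freeness of $\dZ_\ell[\rS^\circ_\alpha]_{\fn_\alpha}$ over $\sfR^\unr$, valid for $\ell \geq \ell^A$), it translates into requiring the Frobenius at $p$ to realize certain conjugacy classes in the residual image of $\rT^A_\ell$ modulo $\ell^m$; a further Chebotarev density argument, uniform in $m$ and arranged to be compatible with (P1)--(P5), secures the condition for all but finitely many such $p$, and thereby completes the construction of a system of Selmer annihilators.
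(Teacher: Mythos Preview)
Your argument is essentially correct and follows the paper's strategy, but you take a noticeably more roundabout path in extracting $c_{m,p}$ from $\alpha(\cD)$. The paper does not decompose $\alpha(\cD)$ into components $(\alpha_j)_j$ and then hunt for a good index $j_0$; instead it observes that, after fixing an identification $\rH^1_\sing(F_\fp,\rT^A_\ell\otimes\dZ/\ell^m)\simeq\dZ/\ell^m$, Proposition~\ref{pr:reciprocity}(2) together with (1) yields an \emph{isomorphism}
\[
\ts\colon(\dZ_\ell/\ell^m)[\rS_0^\circ][\phi_0^\Pi]\otimes(\dZ_\ell/\ell^m)[\rS_1^\circ][\phi_1^\Pi]
\xrightarrow{\sim}
\Hom_{\dZ/\ell^m[\Gal(\ol\dQ/F)]}\bigl(\rH^{2n-1}(\cP\otimes_F\ol\dQ,\dZ_\ell(n))/\fn^m,\;\rT^A_\ell\otimes\dZ/\ell^m\bigr),
\]
and then simply sets $c_{m,p}\coloneqq\rH^1(F,\ts(f_0\otimes f_1))(\alpha(\cD))$. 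Condition~(b) then drops out of Proposition~\ref{pr:reciprocity}(3) in one line, with no need to pass through an auxiliary $\dZ/\ell^{m_{j_0}}$-coefficient module or to argue that the inclusion $\rT^A_\ell\otimes\dZ/\ell^{m_{j_0}}\hookrightarrow\rT^A_\ell\otimes\dZ/\ell^m$ preserves orders in the singular quotient. Your route works, but the paper's use of $\ts$ is the natural move once one has the duality in Proposition~\ref{pr:reciprocity}(2); it is worth internalizing.

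One point where your explanation is off is the justification of (P6). This is not a Chebotarev condition on Frobenius conjugacy classes, nor does it invoke the $R=T$ freeness results of Section~\ref{S: singular}. The Shimura sets $\rS_\alpha^\circ$ are fixed (they do not depend on $p$), and the only $p$-dependence in (P6) is that $\fn_\alpha^m$ is computed inside the smaller Hecke algebra $\dT_\alpha^{\Sigma\cup\{p\}}$ rather than $\dT_\alpha^\Sigma$. Since the image of $\dT_\alpha^\Sigma$ in $\End_{\dZ_\ell}(\dZ_\ell[\rS_\alpha^\circ])$ is a finite $\dZ_\ell$-algebra, it is generated by Hecke operators at finitely many places; for $p$ outside that finite set, discarding the operators at $p$ changes nothing, and (P6) reduces to the case of the full $\dT_\alpha^\Sigma$, where the nontriviality follows from the very choice of $V$ and $(f_0,f_1)$ (since $f_\alpha$ may be taken indivisible by $\ell$). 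This is why the paper disposes of (P6) with the phrase ``it is easy to see''.
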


\begin{proof}
Choose a standard definite hermitian space $V$, a $\Sigma$-level pair $(K,K^\sharp)$ and elements $(f_0,f_1)$ as in Proposition \ref{pr:ggp}. Without lost of generality, we may assume that neither $f_0$ nor $f_1$ is divisible by $\ell$. Let $m_\ell\geq 0$ be the $\ell$-valuation of the sum
\[
\sum_{\rU(V)(F^+)\backslash\rU(V)(\dA^\infty_{F^+})/K}f_0(h)f_1(h).
\]
For every fixed integer $m>m_\ell$, it is easy to see that all but finitely many level-raising prime $p$ modulo $\ell^m$ satisfy (P6). For every such prime number $p$, fix an isomorphism
\[
\rH^1_\sing(F_\fp,\rT^A_\ell\otimes\dZ/\ell^m)\simeq\dZ/\ell^m.
\]
Then by Proposition \ref{pr:reciprocity}(1), Proposition (2) induces an isomorphism
\[
\ts\colon
(\dZ_\ell/\ell^m)[\rS_0^\circ][\phi_0^\Pi]\otimes(\dZ_\ell/\ell^m)[\rS_1^\circ][\phi_1^\Pi]
\xrightarrow\sim
\Hom_{\dZ/\ell^m[\Gal(\ol\dQ/F)]}\(\rH^{2n-1}(\cP\otimes_F\ol\dQ,\dZ_\ell(n))/\fn^m,\rT^A_\ell\otimes\dZ/\ell^m\).
\]
Put
\[
c_{m,p}\coloneqq\rH^1(F,\ts(f_0\otimes f_1))\alpha(D)\in\rH^1(F,\rT^A_\ell\otimes\dZ/\ell^m).
\]
Then by construction, $c_{m,p}$ satisfies (a) and (c) in Definition \ref{de:annihilator}; and by Lemma \ref{le:local_galois} and Proposition \ref{pr:reciprocity}(3), it also satisfies (b) in Definition \ref{de:annihilator}. The corollary follows.
\end{proof}

By \cite{LTXZZ}*{Corollary~D.1.4} (which is essentially a consequence of \cite{CS17}), all but finitely many prime numbers $\ell$ satisfy (L4). Thus, in view of Proposition \ref{pr:kolyvagin}, Theorem \ref{th:sym} is now proved.

\section{Relation to categorical local Langlands}

The recently emerging categorical local Langlands correspondence predicts the existence of certain coherent sheaves on the moduli stack of local Langlands parameters of representation theoretic meaning. Among other things, ($!$-)pullbacks of such sheaves to the stack of global Langlands parameters govern the (integral) cohomology of Shimura varieties and their specific subvarieties. This can be viewed as a refinement of local-global compatibility. We refer to \cite{Zhu20}*{\S4.7} for preliminary discussions and to the forthcoming work \cite{EZ} for a
precise formulation. As explained in \cite{Zhu20}, the work of \cite{XZ} on the geometric realization of certain Jacquet-Langlands correspondences fits into this framework. Here we explain that various maps of Hecke modules in this note can also be interpreted within this framework.

In what follows, for a closed immersion $Y\hookrightarrow X$ of derived stacks (the most general ``space'' we will encounter in this section) and a sheaf $F$ on $Y$, we will regard it as a sheaf on $X$ by extension of zero according to the context; and for a ring $R$, write $\ul{R}_X$ for the constant sheaf valued in $R$ on $X$ in suitable sense.

We fix a prime number $\ell$ as before. Choose an odd prime $p\neq \ell$ and put ourselves in the initial setup of Section \ref{ss:4} but without assuming that $n$ is even (but still at least $2$). In particular, $F^+_{\fp^+}=\dQ_p$, $F_\fp=\dQ_{p^2}$, and $\tc$ gives the generator of $\Gal(\dQ_{p^2}/\dQ_p)$. Write $n=2r+\alpha$ with $r\in\dZ$ and $\alpha\in\{0,1\}$. In this section, we only work with one hermitian space, namely, $V$. Thus, we will suppress the parity $\alpha$ in the scripts of all notations (such as $\bM_\alpha,\rM_\alpha^?,\rB_\alpha^?,\rS_\alpha^?,\dots$). Put $\sfV\coloneqq V\otimes_{F^+}F^+_{\fp^+}$ and $\sfG\coloneqq\rU(\sfV)$. Let $\sfK$ be the $\fp^+$-component of $K$, which is the stabilizer of a self-dual lattice of $\sfV$ hence a hyperspecial maximal subgroup of $\sfG(\dQ_p)$. Let $(V',K')$ be the pair that appears in Proposition \ref{pr:moduli}(4). Put $\sfV'\coloneqq V'\otimes_{F^+}F^+_{\fp^+}$ and $\sfG'\coloneqq\rU(\sfV')$. Let $\sfK'$ be the $\fp^+$-component of $K'$, which is the stabilizer of an almost self-dual lattice of $\sfV'$ hence a special maximal subgroup of $\sfG'(\dQ_p)$.

Recall that we have the moduli scheme $\bM$ from Proposition \ref{pr:moduli}, which is a projective strictly semistable scheme over $\Spec\dZ_{p^2}$ of pure relative dimension $n-1$. Consider the nearby cycles $\rR\Psi(\dZ_\ell[n-1])$ on $\rM$, which fits into the short exact sequence
\begin{align}\label{eq:zhu1}
0 \to \rR j^\bullet_!\ul{\dZ_\ell}_{\rM^\bullet\setminus\rM^\dag}[n-1]\to\rR\Psi(\dZ_\ell[n-1])
\to \rR j^\circ_*\ul{\dZ_\ell}_{\rM^\circ\setminus\rM^\dag}[n-1] \to 0
\end{align}
of perverse sheaves on $\rM$. Here, $j^\circ\colon\rM^\circ\setminus\rM^\dag\to\rM^\circ$ and $j^\bullet\colon\rM^\bullet\setminus\rM^\dag\to\rM^\bullet$ denote the two open immersions. Furthermore, we have the short exact sequence
\begin{align}\label{eq:zhu2}
0 \to \ul{\dZ_\ell}_{\rM^\circ}[n-1] \to \rR j^\circ_*\ul{\dZ_\ell}_{\rM^\circ\setminus\rM^\dag}[n-1]
\to \ul{\dZ_\ell}_{\rM^\dag}[n-2](-1) \to 0
\end{align}
of perverse sheaves on $\rM^\circ$, and similarly the short exact sequence
\begin{align}\label{eq:zhu3}
0 \to \ul{\dZ_\ell}_{\rM^\dag}[n-2] \to \rR j^\circ_!\ul{\dZ_\ell}_{\rM^\bullet\setminus\rM^\dag}[n-1]
\to \ul{\dZ_\ell}_{\rM^\bullet}[n-1] \to 0
\end{align}
of perverse sheaves on $\rM^\bullet$. Note that \eqref{eq:zhu1}, \eqref{eq:zhu2} and \eqref{eq:zhu3} together give the usual weight filtration $\rW_0\subseteq\rW_1\subseteq\rW_2=\rR\Psi(\dZ_\ell[n-1])$ such that
\begin{align*}
\gr_0^\rW&\coloneqq\rW_0=\ul{\dZ_\ell}_{\rM^\dag}[n-2],\\
\gr_1^\rW&\coloneqq\rW_1/\rW_0=\ul{\dZ_\ell}_{\rM^\circ}[n-1]\oplus\ul{\dZ_\ell}_{\rM^\bullet}[n-1],\\
\gr_2^\rW&\coloneqq\rW_2/\rW_1=\ul{\dZ_\ell}_{\rM^\dag}[n-2](-1).
\end{align*}
The weight spectral sequence $\rE^{p,q}_s$ (for $n$) appearing in Section \ref{ss:4} is obtained by taking geometric cohomology of the above graded terms, up to certain Tate twist and degree shifts.

All the above (perverse) sheaves descend to the moduli stack of local Shtukas for appropriate parahoric integral model of $\sfG$ over $\dZ_{p^2}$, and then have further pushforward to the moduli stack of $\sfG$-isocrystals. Similar observation applies to constant sheaves (in $\dZ_\ell$) on $\rS^\circ,\rS^\bullet,\rS^\dag$, together with their Tate twists and degree shifts. 

To explain this, let us set up a few notations. Fix an element $\fj\in\dZ_{p^2}^\times$ satisfying $\fj+\fj^\tc=0$. We choose a basis $\{e_1,\ldots, e_n\}$ of $\sfV$ under which the hermitian form is given by
\begin{equation}\label{eq:Jn}
\rJ_n\coloneqq \fj^{n-1}\cdot\begin{pmatrix}
 & &&& 1 \\
 &&& -1 & \\
 && 1 && \\
 & -1 &&& \\
 \iddots &&&&
\end{pmatrix}
\end{equation}
(while $\rJ_1=1$). We also identify (the underlying vector spaces of) $\sfV'$ and $\sfV$ and identify $\sfG'$ with the unitary group preserving
\[
\rJ'_n\coloneqq b\cdot \rJ_n=\begin{pmatrix} -\rJ_{n-1} &\\ & p  \end{pmatrix},\quad \mbox{where}\quad b\coloneqq\begin{pmatrix} &\fj^{-1}\sfI_{n-1}  \\  p \fj^{1-n}& \end{pmatrix}.
\]
In this way, we identify $\sfG'$ as an extended pure inner form of $\sfG$, that is, $\sfG'=\sfG_b$ is the $\sigma$-centralizer of $b\in\sfG(\dQ_{p^2})=\GL(\sfV)(\dQ_{p^2})$, where $\sigma$ is the Frobenius endomorphism of $\sfG(\dQ_{p^\infty})$ (and $\dQ_{p^\infty}$ denotes a fixed complete maximal unramified extension of $\dQ_{p^2}$).

For $i=0,1,\ldots,n-1$, we have a $\dZ_{p^2}$-lattice
\[
\Lambda_i=\sum_{j\leq i} p^{-1} \dZ_{p^2}\cdot e_j +  \sum_{j>i} \dZ_{p^2}\cdot e_j
\]
in $\sfV$, so that $\Lambda_0\subsetneq \Lambda_1\subsetneq \cdots$ is a standard $\dZ_{p^2}$-lattice chain in $\sfV$. As usual, we extend it to a periodic lattice chain by setting $\Lambda_{i+n}=p^{-1}\Lambda_i$. For every subset $I\subset \{0,1,\ldots, n-1\}$, let $\sG_I$ be the parahoric group scheme of $\sfG_{\dQ_{p^2}}=\sfG'_{\dQ_{p^2}}=\GL(\sfV)$ over $\dZ_{p^2}$ preserving the corresponding lattice chain. Note that if we identify $\{0,1,\ldots, n-1\}$ with the set of vertices of the standard alcove in the Bruhat--Tits building of $\GL(\sfV)$, then the Frobenius of $\sfG$ acts on this set by sending $i$ to $n-i$ while the Frobenius of $\sfG'$ acts on this set by sending $i$ to $n-1-i$. Therefore, $\sG_I$ descends to a parahoric group scheme of $\sfG$ over $\dZ_p$ if and only if $i\in I\iff n-i\in I$, and descends to a parahoric group scheme of $\sfG'$ over $\dZ_p$ if and only if $i\in I\iff n-1-i\in I$.

\begin{example}\label{ex:relevant parahoric}
The following parahoric group schemes have already been used previously.
\begin{itemize}
  \item $\sG_{\{0\}}$ is a hyperspecial parahoric of $\sfG$ and $\sG_{\{0\}}(\dZ_p)=\sfK$;

  \item $\sG_{\{0,n-1\}}$ is a special parahoric of $\sfG'$ and $\sG_{\{0,n-1\}}(\dZ_p)=\sfK'$;

  \item $\sG_{\{r, n-r\}}$ is a special parahoric of $\sfG$ (which is hyperspecial if $n=2r$ is even) and $\sG_{\{r,n-r\}}(\dZ_p)=\sfK^\bullet$;

  \item $\sG_{\{0,r,n-r\}}(\dZ_p)=\sfK^{\dag}$.
\end{itemize}
\end{example}

Now for each subset $I\subset \{0,1,\ldots, n-1\}$, let $\Sht_I^{\loc}$ be the moduli of $\sG_I$-Shtukas over $\Spec \mathbb F_{p^2}$. Let us recall its moduli interpretation following \cite{Zhu20}*{\S 3.1.4}. For a perfect $\mathbb F_{p^2}$-algebra $R$, let $W(R)$ denote its ring of Witt vectors, equipped with the automorphism $\sigma_R$ lifting the absolute Frobenius of $R$. Then $\Sht_I^{\loc}(R)$ classifies the groupoid of pairs $(\sP,\psi)$ consisting of a $\sG_I$-torsor $\sP$ on $\Spec W(R)$ and an isomorphism of $\sfG$-torsors $\psi\colon\sP\res_{\Spec W(R)[1/p]}\simeq \sigma_R^*\sP\res_{\Spec W(R)[1/p]}$. For example, when $I=\{0,n-1\}$, $\Sht_{\{0,n-1\}}^{\loc}(R)$ classifies quadruples $(\sE_{-1}\subset \sE_0, \psi_{-1},\psi_0)$, where
\begin{itemize}
  \item $\sE_{-1}\subset \sE_0$ is an inclusion of rank $n$ finite projective $W(R)$-modules such that $\sE_0/\sE_{-1}$ is an invertible $W(R)/p$-module;

  \item $\psi_0\colon\sE_0[\frac{1}{p}]\simeq \sigma_R^* \sE_{-1}^\vee[\frac{1}{p}]$ and $\psi_{-1}\colon\sE_{-1}[\frac{1}{p}]\simeq \sigma_R^*\sE_0^\vee[\frac{1}{p}]$ are two $W(R)[\frac{1}{p}]$-module isomorphisms making the diagram
      \[
      \xymatrix{
      \sE_{-1}[\frac{1}{p}]\ar^-\simeq[r]\ar^{\psi_{-1}}_-\simeq[d] & \sE_0[\frac{1}{p}]\ar^{\psi_0}_-\simeq[d]\\
      \sigma_R^*\sE_0^\vee[\frac{1}{p}]\ar^-\simeq[r] & \sigma_R^* \sE_{-1}^\vee[\frac{1}{p}]
      }\]
      commutative.
\end{itemize}

We shall denote $\rM^{\loc}\subset\Sht_{\{0,n-1\}}^{\loc}$ the closed substack consisting of those quadruples such that both $\psi_{-1}$ and $\psi_0$ extend to $W(R)$-module maps and such that the cokernels of resulting maps are invertible $W(R)/p$-modules. Inside $\rM^{\loc}$ there are analogues local counterparts of $\rM^{\loc,\circ}$, $\rM^{\loc, \bullet}$, and $\rM^{\loc,\dag}$. Namely, $\rM^{\loc,\circ}$ consists of those $(\sE_{-1}\subset \sE_0,\phi_{-1},\phi_0)$ such that $\sE_0$ and $\sigma_R^*\sE_0^\vee$ coincide as submodules of $\sigma_R^*\sE_{-1}^\vee$, which is isomorphic to the perfection of $[\dP^{n-1}_{\Spec\dF_{p^2}}/\sfK]$. On the other hand, $\rM^{\loc,\bullet}$ consists of those $(\sE_{-1}\subset \sE_0,\psi_{-1},\psi_0)$ such that the cokernel $\sigma_R^*\sE_{-1}^\vee/\sE_{-1}$ is a finite projective $W(R)/p$-module of rank two. Finally, $\rM^{\loc, \dag}\coloneqq\rM^{\loc, \circ}\cap \rM^{\loc, \bullet}$ is isomorphic to the $\sfK$-quotient of the perfection of the Fermat hypersurface of degree $p+1$.

By abuse of notation, we still use $\rM$ to denote its own perfection. We have a morphism (sometimes called the crystalline period map)
\[
\loc_p\colon \rM\to \rM^{\loc}
\]
sending a quadruple in the Definition \ref{de:moduli} to $\sE_{-1}=(\sigma_R)_*\sD(A)_{\bar\tau}$, $\sE_0=(\sigma_R)_* \sD(A)_{\tau}^\vee$, where $\sD(A)$ denotes the covariant Dieudonn\'e module of the abelian scheme $A$ over $\Spec R$. The inclusion $\sE_{-1}\subset \sE_0$ is induced by the polarization $\lambda\colon A\to A^\vee$.

The pullback of $\rM^{\loc,?}\subset \rM^{\loc}$ under this morphism is the perfection of the corresponding scheme $\rM^{?}\subset \rM$. Then \eqref{eq:zhu1}, \eqref{eq:zhu2}, \eqref{eq:zhu3} are the $!$-pullback along $\loc_p$ of the corresponding cofiber sequences of $\dZ_\ell$-sheaves on $\rM^{\loc}$.




Let $\r{Isoc}_{\sfG}$ be the moduli of $\sfG$-isocrystals as defined in \cite{Zhu20}*{\S 3.1}. Recall that its $R$-points classify the groupoid of pairs $(\sP,\psi)$, where $\sP$ is a $\sfG$-torsor over $\Spec W(R)[1/p]$ that can be trivialized \'etale locally on $\Spec R$, and $\psi\colon\sP\simeq(\sigma_R)^*\sP$ is an isomorphism of $\sfG$-torsors. Let
\[
\mathrm{Nt}\colon \r{Sht}_I^\loc\to \r{Isoc}_{\sfG}\otimes \dF_{p^2}
\]
be the morphism sending $\sP$ to $\sP\res_{W(R)[1/p]}$. Then applying $\mathrm{Nt}_*$ to the local counterparts of \eqref{eq:zhu1}, \eqref{eq:zhu2}, \eqref{eq:zhu3}, we obtain corresponding cofiber sequences in the category of $\dZ_\ell$-sheaves on $\mathrm{Isoc}_G$.\footnote{We refer to \cite{Zhu25} for the definition of the category of $\ell$-adic sheaves on spaces such as $\r{Sht}^{\loc}_I$ and $\r{Isoc}_{\sfG}$.}




Next we describe the (conjectural) counterparts of those cofiber sequences on $\r{Isoc}_{\sfG}$ under the categorical local Langlands correspondence, which are cofiber sequences in the derived category of coherent complexes on the stack $\cX_\sfG$ over $\Spec\dZ_\ell$ that parameterizes tame local Langlands parameters of $\sfG$ with coefficients in $\dZ_\ell$. Various patched modules appearing in \cite{LTXZZ2} should be the completions of such coherent sheaves at appropriate points on $\cX_\sfG$. From now on, all schemes and stacks are understood in the derived sense, even if they are classical.

Denote by $\hat\sfG$ the dual group of $\sfG$ (over $\dZ$), which is nothing but $\GL_n$. The notion of $C$-group was first introduced in \cite{BG11}, with a different but equivalent formulation in \cite{Zhu24}. By \cite{Zhu24}*{\S1.1}, the $C$-group of $\sfG$ can be written as
\[
\pres{c}\sfG=(\hat\sfG\times\bG_m)\rtimes\Gal(\dQ_{p^2}/\dQ_p)
\]
in which the action of the unique generator of $\Gal(\dQ_{p^2}/\dQ_p)$ on $\hat\sfG\times\bG_m$ is given by the assignment $(A,\lambda)\mapsto(\lambda^{n-1}\fc(A),\lambda)$, where $\fc$ is the automorphism of $\GL_n$ given by
\begin{equation*}\label{eq: outer automorphism of GLn}
\fc: \GL_n\to\GL_n,\quad A\mapsto \rJ_n \pres{t}A^{-1}\rJ_n^{-1}
\end{equation*}
(see \eqref{eq:Jn} for $\rJ_n$).

In \cite{Zhu20}*{\S3.3}, the author constructed the stack of tame $L$-parameters $\r{Loc}^{\r{tame}}_{\pres{c}\sfG,\sfF^+}$ over $\Spec\dZ_\ell$ (essentially using \cite{LTXZZ2}*{Proposition~3.3.2}) and the affine scheme of framed $L$-parameters $\r{Loc}^{\r{tame},\Box}_{\pres{c}\sfG,\sfF^+}$. To simplify notation, put
\[
\cX_\sfG\coloneqq\r{Loc}^{\r{tame}}_{\pres{c}\sfG,\sfF^+}, \quad \cY_\sfG\coloneqq\r{Loc}^{\r{tame},\Box}_{\pres{c}\sfG,\sfF^+},
\]
so that $\cX_\sfG=\cY_\sfG/\hat\sfG_{\dZ_\ell}$.

\begin{remark}\label{rem: XG as fixed point stack}
We have the following remarks.
\begin{enumerate}
\item Consider the morphism $\fc_p\colon\hat\sfG\to\hat\sfG$ sending $A$ to $\fc(A^p)$, which is compatible with the $\fc$-conjugation action (of $\hat\sfG$ on itself) hence induces an isomorphism $\hat\sfG/\hat\sfG\to \hat\sfG/\hat\sfG$, still denoted by $\fc_p$. As explained in \cite{Zhu25}*{\S2.1.3}, after choosing a tame generator $\tau$ and a lifting of arithmetic Frobenius $\phi_p$ of $\dQ_p$, there is a natural isomorphism
    \[
    \cX_{\sfG}\simeq  \hat\sfG/\hat\sfG\times_{\r{id}\times\fc_p, (\hat\sfG/\hat\sfG\times  \hat\sfG/\hat\sfG), \Delta}  (\hat\sfG/\hat\sfG)_{\dZ_\ell}
    \]
    such that the restriction morphism $\cX_{\sfG}\to  \hat\sfG/\hat\sfG$ sending $\rho$ to $\rho(\tau)$ corresponds to the second projection on the right hand side.

\item The group $\pres{c}\sfG$ is slightly different from the group $\sG_n$ used in \cite{LTXZZ2}. However, as explained at the end of \cite{BG11}, the $\pres{c}\sfG$-valued $L$-parameters are equivalent to those $\sG_n$-valued Galois representations studied in \cite{LTXZZ2}.


\item We let $\fZ_{\sfG}$ denote the ring of regular functions on $\cX_{\sfG}$, which is called the tame spectral Bernstein center. Maximal ideals of $\fZ_{\sfG}$ can be identified with completely reducible tame $L$-parameters up to $\hat\sfG$-conjugacy, such that the canonical map $\cX_{\sfG}\to \Spec \fZ_{\sfG}$ sends an $L$-parameter $\rho$ to its semisimplification $\rho^{\r{ss}}$. The ring $\fZ_{\sfG}$ has a quotient ring $\fZ_{\sfG}^{\unr}$ classifying unramified completely reducible $L$-parameters up to conjugacy. Note that giving such an unramified $L$-parameter $\rho^{\r{ss}}$, we have $\rho^{\r{ss}}(\phi)=(A,p^{-2},1)\in \pres{c}\sfG$, where $\phi=\phi_p^2$ is (a lifting of) the arithmetic Frobenius of $\dQ_{p^2}$. Then up to $\hat\sfG$-conjugacy, $\rho^{\r{ss}}$ is uniquely determined by the set $\{\alpha_1,\ldots,\alpha_n\}$ of eigenvalues (with multiplicities) of the matrix $p^{1-n}A$, which form an (abstract) unitary Satake parameter in the sense of \cite{LTXZZ}*{Definition~3.1.3(1)}. Thus in the sequel, we will identify points of $\Spec \fZ_{\sfG}^{\unr}$ with unitary Satake parameters.
\end{enumerate}
\end{remark}

Recall that unipotent conjugacy classes of $\GL_n$ (over a field) are in bijection with partitions of $n$, and every unipotent conjugacy class $\sfO$ is stable under any automorphism of $\GL_n$ and therefore is $\Gal(\dQ_{p^2}/\dQ_p)$-stable.
For every unipotent conjugacy class $\sfO$ in $\hat\sfG_{\dQ_\ell}$, we denote by $\cX_{\sfG,\sfO}\subset \cX_{\sfG}$ the Zariski closure of the set of points $\rho\in\cX_\sfG(\ol\dQ_\ell)$ such that $\rho$ maps the tame inertia into $\sfO(\ol\dQ_\ell)$, equipped with the reduced closed subscheme structure. Then each member $\cX_{\sfG,\sfO}$ is an irreducible component of $\cX_\sfG$, flat over $\dZ_\ell$. For example, when $\sfO=1$ is the trivial orbit, $\cX_\sfG^\unr\coloneqq\cX_{\sfG,1}$ is the stack of unramified Langlands parameter; when $\sfO$ is the regular unipotent orbit, $\cX_{\sfG,\sfO}$ is the Steinberg component; when $\sfO$ is the unique nontrivial minimal unipotent orbit, denoted by $\sfO_\mnm$, we put $\cX_\sfG^\mnm\coloneqq\cX_{\sfG,\sfO_\mnm}$. Finally, put $\cX_{\sfG}^{\mix}\coloneqq\cX_\sfG^\unr\cup \cX_\sfG^\mnm$ and $\cX_\sfG^{\r{cong}}\coloneqq\cX_\sfG^\unr\cap \cX_\sfG^\mnm$ in $\cX_\sfG$.

Note that if $\rho\in \cX_{\sfG,\sfO}$ for some orbit $\sfO$, then $\rho^{\r{ss}}$ is unramified. More precisely, the morphism $\cX_{\sfG}\to \Spec\fZ_{\sfG}$ restricts to a morphism $\cX_{\sfG,\sfO}\to \Spec\fZ_{\sfG}^{\unr}$. One can show that there exists a unique quotient ring $\fZ_{\sfG}^{\sfO}$ of $\fZ_{\sfG}^{\unr}$ such that $\Spec \fZ_{\sfG}^{\sfO}\subset \Spec \fZ_{\sfG}^{\unr}$ is the scheme-theoretic image of the morphism $\cX_{\sfG,\sfO}\to \Spec\fZ_{\sfG}^{\unr}$.

\begin{example}\label{ex: Satake for min}
We have two examples.
\begin{enumerate}
  \item If $\sfO=1$, then $\fZ_{\sfG}^{\sfO}=\fZ_{\sfG}^{\unr}$, which is isomorphic to the ring of regular functions on $\cX_{\sfG}^{\unr}$.

  \item If $\sfO=\sfO_{\mnm}$, then $\Spec \fZ_{\sfG}^{\sfO}\subset \Spec \fZ_{\sfG}^{\unr}$ classifies those unitary Satake parameters containing the pair $\{(-1)^np,(-1)^np^{-1}\}$. If we identify $\fZ_{\sfG}^{\unr}$ with the spherical Hecke algebra of $\sfG$ via the Satake isomorphism (after choosing $\sqrt{p}$), then $\fZ_{\sfG}^{\sfO}=\fZ_{\sfG}^{\unr}/\tN\fZ_{\sfG}^{\unr}$ for some Hecke operator $\tN$. When $n$ is even, this Hecke operator is exactly the one in \eqref{eq:deformation_1}.
\end{enumerate}
\end{example}

\begin{remark}
Suppose that $\ell\nmid p^2-1$. Let $\rho$ be a geometric point of $\cX_{\sfG}^{\r{cong}}\otimes \dF_\ell$, with $\{\alpha_1,\ldots,\alpha_n\}$ the associated unitary Satake parameter. If $\rho$ is general,
If the pair $\{(-1)^np,(-1)^np^{-1}\}$ appears in $\{\alpha_1,\ldots,\alpha_n\}$ exactly once,\footnote{When $n$ is even, such unitary Satake parameter is called \emph{level-raising special} in \cite{LTXZZ}*{Definition~3.1.5}.} then both $\cX_{\sfG}^{\unr}$ and $\cX_{\sfG}^{\mnm}$ are smooth at this point, and the local equations around this point can be described similar to the Galois deformation rings in \S \ref{S: singular}. See \cite{LTXZZ2}*{Proposition~3.5.2} for details.
\end{remark}

The standard representation $\dZ^{\oplus n}$ of $\hat\sfG=\GL_n$, hence $\hat\sfG\times\bG_m$ by inflation, gives rise to a vector bundle on $\cX_\sfG$ (see the paragraph after \cite{Zhu20}*{Remark 2.2.15}), denoted by $\sS_\sfG$, equipped with a universal representation
\[
\rho_\univ\colon\Gal(\ol\dQ_p/\sfF)\to\GL_{\sO_{\cX_\sfG}}(\sS_\sfG)
\]
together with an isomorphism
\begin{align*}
\rho_\univ^\tc\simeq\rho_\univ^\vee(1-n).
\end{align*}
Here, $\rho_\univ^\tc$ denotes the conjugation of $\rho_\univ$ by $\tc$ (which can be canonically defined without choosing a lifting of the generator). In particular, we see that as vector bundles on $\cX_G$ there is a natural isomorphism $\sS_{\sfG}^\vee\simeq \sS_{\sfG}$.

\begin{remark}\label{rem: determinant of tautological representation}
As explained in \cite{Zhu20}*{\S 3.2}, $\cX_{\sfG}$ is a $\mu_2$-gerbe and therefore its category of coherent sheaves admits a $\dZ/2$-grading. Notice that $\sS_{\sfG}$ is in degree $1$, hence $\sD_{\sfG}\coloneqq\wedge^n\sS_{\sfG}$ is in degree $\alpha$. One also notices that when $n$ is even, $\sD_{\sfG}$ is non-trivial when restricted to a (stacky) point $\rho\in\cX_{\sfG}^{\unr}$ whose unitary Satake parameter contains $-1$, since the $n$th wedge of the standard representation of $\hat\sfG$ restricts to a non-trivial representation of the centralizer of such $\rho$. It follows that $\sD_{\sfG}$ is a line bundle of order two on $\cX_{\sfG}$. More precisely, keeping track of the Galois action, we have $\sD^{\otimes 2}_{\sfG}\simeq\sO_{\cX_{\sfG}}(n(n-1))$.
\end{remark}

Let $\hat\sfP\subset\hat\sfG$ be the standard maximal parabolic subgroup consisting of elements preserving the standard rank $(n-1)$-submodule $\dZ^{\oplus n-1}\subset \dZ^{\oplus n}$. Its unipotent radical $\sfU_{\hat\sfP}\subset \hat\sfP$ consists of those elements such that the induced automorphisms of $\dZ^{\oplus n-1}$ and $\dZ^{\oplus n}/\dZ^{\oplus n-1}$ are trivial.
We have characters
\[
\delta_{\hat\sfG,\hat\sfP}\colon\hat\sfP\to \hat\sfG\xrightarrow{\det}\bG_m,
\]
and
\[
\delta_{\hat\sfP}\colon
\hat\sfP\to \hat\sfP/\hat\sfU_{\hat\sfP}\to \GL_{n-1}\xrightarrow{\det}\bG_m.
\]
Let $\sO(\delta_{\hat\sfG,\hat\sfP})$ and $\sO(\delta_{\hat\sfP})$ denote the corresponding line bundles on $[\Spec \dZ/\hat\sfP]$ and their pullbacks to any stack equipped with a morphism to $[\Spec \dZ/\hat\sfP]$. For example, the canonical sheaf of $\hat\sfG/\hat\sfP\simeq \dP^{n-1}$ is $\sO(n(\delta_{\hat\sfP}-\delta_{\hat\sfG,\hat\sfP})+\delta_{\hat\sfG,\hat\sfP})$. Note that when forgetting the $\hat\sfG$-equivariant structure, the line bundle $\sO(\delta_{\hat\sfG,\hat\sfP}-\delta_{\hat\sfP})$ is the ample generator of $\hat\sfG/\hat\sfP$ and $\sO(\delta_{\hat\sfG,\hat\sfP})$ is trivial.

Let
\[
\r{Spr}_{\hat\sfP}\coloneqq\hat\sfG\times^{\hat\sfP}\sfU_{\hat\sfP}\to \hat{\sfG},\quad (g,u)\mapsto gug^{-1}
\]
be the partial Grothendieck--Springer resolution. The image of this map is the closure $\ol{\sfO_\mnm}$ of $\sfO_\mnm$ (in $\hat\sfG$).

Similarly, we have the standard maximal parabolic subgroup $\hat\sfQ\subset\hat\sfG$, consisting of elements preserving the standard rank one submodule $\dZ\subset \dZ^{\oplus n}$, with similarly defined characters $\delta_{\hat\sfG,\hat\sfQ}$, and the corresponding partial Grothendieck--Springer resolution $\r{Spr}_{\hat\sfQ}\coloneqq\hat\sfG\times^{\hat\sfQ}\sfU_{\hat\sfQ}\to \hat{\sfG}$, whose image is again $\ol{\sfO_\mnm}$. Put
\[
\r{St}_{\hat\sfP,\hat\sfQ}\coloneqq \sfU_{\hat\sfP}/\hat\sfP\times_{\hat\sfG/\hat\sfG} \sfU_{\hat\sfQ}/\hat\sfQ\simeq (\r{Spr}_{\hat\sfP}\times_{\hat{\sfG}} \r{Spr}_{\hat\sfQ})/\hat\sfG.
\]
We emphasize that we are taking the derived fiber product over $\hat{\sfG}$ (not over $\ol{\sfO_{\mnm}}$), and $\r{St}_{\hat\sfP,\hat\sfQ}$ indeed has non-trivial derived structure. However, its underlying classical substack $(\r{St}_{\hat\sfP,\hat\sfQ})_{\r{cl}}\subset \r{St}_{\hat\sfP,\hat\sfQ}$ admits the following moduli interpretation: For a classical scheme $S$, the groupoid $\r{St}_{\hat\sfP,\hat\sfQ}(S)$ consists of quadruples $(\sV,\sV_1,\sV_{n-1},u)$ in which
\begin{itemize}
  \item $\sV$ is a vector bundle on $S$ of rank $n$;

  \item $\sV_i\subset \sV$ is a subbundle of rank $i$ (for $i=1,n-1$);

  \item $u\colon\sV\to\sV$ is an automorphism preserving both $\sV_1$ and $\sV_{n-1}$ such that the induced automorphisms on $\sV_1,\sV/\sV_1,\sV_{n-1},\sV/\sV_{n-1}$ are all identities.
\end{itemize}
It follows that $(\r{St}_{\hat\sfP,\hat\sfQ})_{\r{cl}}$ is a union of two closed substacks:
\begin{itemize}
  \item $\r{St}_{\hat\sfP,\hat\sfQ}^{\circ}=(\{1\}/\hat\sfP)\times_{\{1\}/\hat\sfG}(\{1\}/\hat\sfQ)\subset \r{St}_{\hat\sfP,\hat\sfQ}$ consisting of those with $u=1$;

  \item $\r{St}_{\hat\sfP,\hat\sfQ}^{\bullet}=(\sfU_{\hat\sfP}\cap \sfU_{\hat\sfQ})/(\hat\sfP\cap \hat\sfQ)\subset \r{St}_{\hat\sfP,\hat\sfQ}$ consisting of those with $\sV_1\subset \sV_{n-1}$.
\end{itemize}
In addition, the intersection $\r{St}_{\hat\sfP,\hat\sfQ}^{\circ}\cap \r{St}_{\hat\sfP,\hat\sfQ}^{\bullet}$ can be identified with the quotient by $\hat\sfG$ of the incidence divisor $\hat\sfG/(\hat\sfP\cap\hat\sfQ)\subset \hat\sfG/\hat\sfP\times\hat\sfG/\hat\sfQ$. We have the following short exact sequence of coherent sheaves on $\r{St}_{\hat\sfP,\hat\sfQ}$
\begin{equation}\label{eq:zhu6}
0\to \sO_{\r{St}_{\hat\sfP,\hat\sfQ}^{\circ}}(\delta_{\hat\sfP}-\delta_{\hat\sfG,\hat\sfP},\delta_{\hat\sfG,\hat\sfQ}-\delta_{\hat\sfQ})\to \sO_{(\r{St}_{\hat\sfP,\hat\sfQ})_{\r{cl}}}\to \sO_{\r{St}_{\hat\sfP,\hat\sfQ}^{\bullet}}\to 0.
\end{equation}

Notice that the morphism $\fc_p$ from Remark \ref{rem: XG as fixed point stack} restricts to an isomorphism $\sfU_{\hat\sfP,\dZ_\ell}\simeq \sfU_{\hat\sfQ,\dZ_\ell}$ hence induces an isomorphism $(\sfU_{\hat\sfP}/\hat\sfP)_{\dZ_\ell}\simeq (\sfU_{\hat\sfQ}/\hat\sfQ)_{\dZ_\ell}$, again denoted by $\fc_p$. Therefore we can rewrite $\r{St}_{\hat\sfP,\hat\sfQ,\dZ_\ell}$ as
\[
\r{St}_{\hat\sfP,\hat\sfQ,\dZ_\ell}\simeq \hat\sfG/\hat\sfG\times_{\r{id}\times\fc_p, (\hat\sfG/\hat\sfG\times  \hat\sfG/\hat\sfG)}((\sfU_{\hat\sfP}/\hat\sfP)_{\dZ_\ell}\times (\sfU_{\hat\sfP}/\hat\sfP)_{\dZ_\ell}).
\]
If we put
\[
\widetilde\cX_{\sfG}\coloneqq\sfU_{\hat\sfP}/\hat\sfP\times_{\Delta, (\sfU_{\hat\sfP}/\hat\sfP\times \sfU_{\hat\sfP}/\hat\sfP)} \r{St}_{\hat\sfP,\hat\sfQ,\dZ_\ell},
\]
then we have the following commutative diagram
\[
\xymatrix{
\cX_{\sfG}\ar[d] & \ar_{\pi}[l] \widetilde\cX_{\sfG} \ar^{\delta}[r] \ar[d] & \r{St}_{\hat\sfP,\hat\sfQ,\dZ_\ell} \ar[d] \\
 (\hat\sfG/\hat\sfG)_{\dZ_\ell}& \ar[l] (\sfU_{\hat\sfP}/\hat\sfP)_{\dZ_\ell}\ar^-{\Delta}[r] & (\sfU_{\hat\sfP}/\hat\sfP)_{\dZ_\ell}\times (\sfU_{\hat\sfP}/\hat\sfP)_{\dZ_\ell},
}
\]
with Cartesian squares, where the top row is the base change of the bottom arrow along the morphism $\r{id}\times\fc_p\colon\hat\sfG/\hat\sfG\to\hat\sfG/\hat\sfG\times  \hat\sfG/\hat\sfG$.

Put
\[
\widetilde\cX_G^{\circ}\coloneqq\widetilde\cX_G\times_{\r{St}_{\hat\sfP,\hat\sfQ,\dZ_\ell}}\r{St}_{\hat\sfP,\hat\sfQ,\dZ_\ell}^{\circ},\quad \widetilde\cX_G^{\bullet}\coloneqq\widetilde\cX_G\times_{\r{St}_{\hat\sfP,\hat\sfQ,\dZ_\ell}}\r{St}_{\hat\sfP,\hat\sfQ,\dZ_\ell}^{\bullet},\quad \widetilde\cX_G^{\mix}\coloneqq\widetilde\cX_G\times_{\r{St}_{\hat\sfP,\hat\sfQ,\dZ_\ell}}(\r{St}_{\hat\sfP,\hat\sfQ,\dZ_\ell})_{\r{cl}},
\]
which are highly derived stacks. Nevertheless, the geometry of their underlying classical stacks is easy to understand. For example,
\begin{itemize}
  \item $(\widetilde\cX_G^{\circ})_{\r{cl}}=\dP(\sS_\sfG\res_{\cX_\sfG^\unr})$;

  \item generically over $\cX_{\sfG}^{\unr}$, $(\widetilde\cX_\sfG^\bullet\cap\widetilde\cX_\sfG^\circ)_{\r{cl}}$ is a quadric in $\dP(\sS_\sfG\res_{\cX_\sfG^\unr})$;

  \item the morphism $\pi\colon\widetilde\cX_\sfG^\mix\to\cX_\sfG$ restricts to an isomorphism $\widetilde\cX_{\sfG}^{\mix}\setminus \widetilde\cX_\sfG^\circ \xrightarrow{\simeq} \cX_\sfG^\mix\setminus\cX_\sfG^\unr$.

\end{itemize}

Taking the derived structure into account, we have the following result. In what follows, tensor products of coherent sheaves are always taken over the corresponding structural sheaf.

\begin{lem}\label{lem: coherent sheaf for balloon stratum}
The stack $\widetilde\cX_\sfG^{\circ}$  is the odd relative cotangent bundle of $\dP(\sS_\sfG\res_{\cX_\sfG^\unr})\to \cX_\sfG^\unr$. More precisely, its underlying classical stack is the projective bundle associated with the vector bundle $\sS_\sfG\res_{\cX_\sfG^\unr}$, but the structural sheaf is the symmetric algebra of the relative tangent sheaf of $\dP(\sS_\sfG\res_{\cX_\sfG^\unr})\to \cX_\sfG^\unr$ placed in cohomological degree $-1$.
\end{lem}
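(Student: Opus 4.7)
The plan is to realise $\widetilde\cX_\sfG^\circ$ as a base change along the inclusion of the ``unramified'' locus, identify its classical truncation with the stated projective bundle, and compute the derived enhancement via a Koszul-type computation.

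First, by chasing the Cartesian squares defining $\widetilde\cX_\sfG$ and $\r{St}_{\hat\sfP,\hat\sfQ}$, together with the closed immersion $\r{St}^\circ_{\hat\sfP,\hat\sfQ} = B\hat\sfP\times_{B\hat\sfG}B\hat\sfQ \hookrightarrow \r{St}_{\hat\sfP,\hat\sfQ}$ (which on the identification of Remark \ref{rem: XG as fixed point stack} corresponds to fixing the $\hat\sfG/\hat\sfG$-coordinate at the identity), one verifies that
\[
\widetilde\cX_\sfG^\circ \simeq B\hat\sfP\times_{B\hat\sfG}\cX_\sfG^\unr,
\]
where the map $\cX_\sfG^\unr\to B\hat\sfG$ is the natural one on underlying torsors. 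The morphism $B\hat\sfP\to B\hat\sfG$ is classically smooth and representable with fibre $\hat\sfG/\hat\sfP\cong\dP^{n-1}$; after pulling back to $\cX_\sfG^\unr$ and applying the self-duality $\sS_\sfG\simeq\sS_\sfG^\vee$ from Remark \ref{rem: determinant of tautological representation} to pass between hyperplanes and lines in the standard representation, the underlying classical stack of $\widetilde\cX_\sfG^\circ$ is identified with $P\coloneqq\dP(\sS_\sfG\res_{\cX_\sfG^\unr})$, giving the first assertion of the lemma.

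For the derived enhancement, the key observation is that the map $\sfU_{\hat\sfP}/\hat\sfP\to\hat\sfG/\hat\sfG$ is not smooth at the zero section, and the conormal bundle of $B\hat\sfP\hookrightarrow\sfU_{\hat\sfP}/\hat\sfP$ is the $\hat\sfP$-representation $\sfu_{\hat\sfP}^\vee$. Consequently, although $B\hat\sfP\to B\hat\sfG$ is classically smooth, the derived fibre product $B\hat\sfP\times_{B\hat\sfG}\cX_\sfG^\unr$ acquires nontrivial derived structure from the Koszul resolution of the zero section, interacting with the intrinsic derived structure of $\cX_\sfG^\unr=\cX_\sfG\times_{\hat\sfG/\hat\sfG}B\hat\sfG$. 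Matching the resulting graded pieces (which are certain exterior powers of a natural bundle on $P$) against the relative tangent $T_{P/\cX_\sfG^\unr}$ via the Euler sequence
\[
0\to\sO_P\to\pi^*\sS_\sfG\otimes\sO_P(1)\to T_{P/\cX_\sfG^\unr}\to 0
\]
identifies the structure sheaf with $\Sym_{\sO_P}(T_{P/\cX_\sfG^\unr}[1])$, as claimed.

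The main obstacle is the careful bookkeeping of the $\fc_p$-twist, which intertwines the roles of $\hat\sfP$ and $\hat\sfQ$ and governs how the conormal bundle on $B\hat\sfP$ is transported to $P$ after base change along $\cX_\sfG^\unr\to B\hat\sfG$. One must verify that the Koszul-type CDGA emerging from the fibre product coincides with $\Sym_{\sO_P}(T_{P/\cX_\sfG^\unr}[1])$ globally on $P$, not merely degreewise or fibrewise, which is where the compatibility with the Euler sequence (and hence with the geometry of $P$ as the projective bundle associated with $\sS_\sfG$) becomes essential.
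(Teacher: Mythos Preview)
Your identification $\widetilde\cX_\sfG^\circ \simeq B\hat\sfP \times_{B\hat\sfG} \cX_\sfG^\unr$ cannot be correct as \emph{derived} stacks, and this is a genuine gap. The morphism $B\hat\sfP\to B\hat\sfG$ is smooth (its fiber is $\hat\sfG/\hat\sfP$), so for any derived stack $X$ the relative cotangent complex of $B\hat\sfP\times_{B\hat\sfG}X$ over $X$ sits in degree~$0$. In particular, with the paper's definition of $\cX_\sfG^\unr$ as a \emph{reduced} closed substack of $\cX_\sfG$, your fiber product is classical, contradicting the nontrivial derived structure asserted by the lemma. Your attempted fix---replacing $\cX_\sfG^\unr$ by the derived fiber $\cX_\sfG\times_{\hat\sfG/\hat\sfG}B\hat\sfG$---does not help: whatever derived structure that object carries, base-changing along the smooth map $B\hat\sfP\to B\hat\sfG$ only pulls it back to $P$. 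But the lemma says the degree~$-1$ part of the structure sheaf is $T_{P/\cX_\sfG^\unr}$, which is \emph{not} pulled back from $\cX_\sfG^\unr$ (it restricts to $T_{\dP^{n-1}}$ on each fiber). So no choice of ``derived $\cX_\sfG^\unr$'' can make your formula give the right answer.

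The paper's argument keeps the fiber product over the diagonal explicit:
\[
\widetilde\cX_\sfG^\circ\;\simeq\;
\bigl(\{1\}/\hat\sfP\times_{\{1\}/\hat\sfG}\{1\}/\hat\sfQ\bigr)
\times_{\sfU_{\hat\sfP}/\hat\sfP\times\sfU_{\hat\sfP}/\hat\sfP,\;\Delta}\sfU_{\hat\sfP}/\hat\sfP,
\]
and then applies the Koszul resolution of the diagonal $\sfU_{\hat\sfP}\xrightarrow{\Delta}\sfU_{\hat\sfP}\times\sfU_{\hat\sfP}$. This places the $\hat\sfP$-representation $(\Lie\sfU_{\hat\sfP})^*$ in degree~$-1$; the point is that the associated bundle $\hat\sfG\times^{\hat\sfP}(\Lie\sfU_{\hat\sfP})^*$ is exactly $T_{\hat\sfG/\hat\sfP}$, hence after base change one obtains $T_{P/\cX_\sfG^\unr}$. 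Your observation that the conormal of $B\hat\sfP\hookrightarrow\sfU_{\hat\sfP}/\hat\sfP$ is $\fu_{\hat\sfP}^\vee$ is the right ingredient, and the Euler sequence you wrote is essentially this last identification; but these must be fed into the fiber product over $(\sfU_{\hat\sfP}/\hat\sfP)^2$ via $\Delta$, not into a fiber product over $B\hat\sfG$.
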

\begin{proof}
We sketch the proof. Keeping track of the definitions and various identifications explained above, we see that
\[
\widetilde\cX_\sfG^\unr\simeq
\(\{1\}/\hat\sfP\times_{\{1\}/\hat\sfG}\{1\}/\hat\sfQ\)\times_{\sfU_{\hat\sfP}/\hat\sfP\times \sfU_{\hat\sfP}/\hat\sfP, \Delta} \sfU_{\hat\sfP}/\hat\sfP.
\]
Now the claim follows from the Koszul resolution of the diagonal $\sfU_{\hat\sfP}\xrightarrow{\Delta}\sfU_{\hat\sfP}\times \sfU_{\hat\sfP}$ and the fact that $\hat\sfG\times^{\hat\sfP}(\Lie \sfU_{\hat\sfP})^*$ can be identified with the tangent bundle of $\hat\sfG/\hat\sfP$.
\end{proof}

\begin{remark}
We invite readers to find the formal analogy between the geometry of $\rM^{\loc}$ and $\widetilde\cX^{\mix}_G$.
\end{remark}

Now we can construct coherent sheaves. By definition, we have the universal filtration
\begin{align}\label{eq:zhu7}
0\to\sP\to\pi^*\sS_\sfG\to\sQ\to 0
\end{align}
of vector bundles on $\widetilde\cX_\sfG$ (which is the descent of the obvious filtration from $\cY_\sfG\times_{\hat\sfG_{\dZ_\ell}}\r{Spr}_{\hat\sfP,\dZ_\ell}$). Note that $\sQ\simeq \delta^*\sO(\delta_{\hat\sfG,\hat\sfP}-\delta_{\hat\sfP},0)\simeq \delta^*\sO(0,\delta_{\hat\sfQ}-\delta_{\hat\sfG,\hat\sfQ})$. The short exact sequence \eqref{eq:zhu6} induces the following cofiber sequence
\begin{align}\label{eq:zhu5}
\sO_{\widetilde\cX_\sfG^\circ}\otimes\sQ^{-1-n}\otimes \pi^*\sD_\sfG \to \sO_{\widetilde\cX_\sfG^\mix}\otimes\sQ^{1-n}\otimes  \pi^*\sD_\sfG
\to \sO_{\widetilde\cX_\sfG^\bullet}\otimes\sQ^{1-n}\otimes \pi^*\sD_\sfG
\end{align}
of coherent sheaves on $\widetilde\cX_\sfG$.  

Recall that the unipotent categorical local Langlands correspondence $\dL_\sfG$ has been established in \cite{Zhu25} for $\ol\dQ_\ell$-coefficients and, under some mild restriction of $\ell$, for $\ol\dF_\ell$-coefficients. To state the following conjecture, we base change all the $\dZ_\ell$-sheaves and all the stacks over $\dZ_\ell$ appeared in the previous discussions to the coefficient field $\dE$ that is either $\ol\dQ_\ell$ or $\ol\dF_\ell$, although we expect that the $\dZ_\ell$-version will continue to hold. The following conjecture should be within reach.

\begin{conjecture}\label{conj: CLLC}
We regard $\cind_{\sfK}^{\sfG(\dQ_p)}\dE$, $\cind_{\sfK^\bullet}^{\sfG(\dQ_p)}\dE$, and $\cind_{\sfK'}^{\sfG'(\dQ_p)}\dE$ as sheaves on $\r{Isoc}_{\sfG}$, supported on $[\Spec\dF_p/\sfG(\dQ_p)]$, $[\Spec\dF_p/\sfG(\dQ_p)]$, and $[\Spec\dF_p/\sfG'(\dQ_p)]$, respectively.
\begin{enumerate}
  \item Under the unipotent categorical local Langlands correspondence $\dL_{\sfG}$ (with appropriate choice of Whittaker datum), we have
      \begin{align*}
      \sA_{\sfG,\sfK}&\coloneqq \dL_{\sfG}(\cind_{\sfK}^{\sfG(\dQ_p)}\dE)\simeq \sO_{\cX_G^{\unr}}, \\
      \sA_{\sfG',\sfK'}&\coloneqq \dL_{\sfG}(\cind_{\sfK'}^{\sfG'(\dQ_p)}\dE)\simeq \rR\pi_*\(\sO_{\widetilde\cX_\sfG^\mix}\otimes\sQ^{1-n}\)\otimes \sD_\sfG;
      \end{align*}
      and
      \begin{align*}
      \sA_{\sfG,\sfK^\bullet}\coloneqq \dL_{\sfG}(\cind_{\sfK^\bullet}^{\sfG(\dQ_p)}\dE)\simeq
      \begin{dcases}
      \sA_{\sfG,\sfK}\otimes\sD_\sfG(\tfrac{n(1-n)}{2}), &\text{$n$ even}; \\
      \sA_{\sfG',\sfK'}\otimes\sD_\sfG(\tfrac{n(1-n)}{2}), &\text{$n$ odd}.
      \end{dcases}
      \end{align*}
      In addition, $\sA_{\sfG',\sfK'}$ is a maximal Cohen--Macaulay coherent sheaf (concentrated in degree zero). When $n$ is odd, it is supported on $\cX_{\sfG}^{\mix}$, and its restriction to $\cX_{\sfG}^{\unr}\setminus \cX_{\sfG}^{\r{cong}}$ is isomorphic to $\sD_{\sfG}(\tfrac{n(1-n)}{2})\res_{\cX_{\sfG}^{\unr}\setminus \cX_{\sfG}^{\r{cong}}}$. When $n$ is even, it is supported on $\cX_{\sfG}^{\mnm}$, and is of generic rank one.


  \item Under the unipotent categorical local Langlands correspondence $\dL_{\sfG}$, the local counterpart of \eqref{eq:zhu1} (with coefficients in $\dE$) after applying $\r{Nt}_*$ corresponds to the pushforward of \eqref{eq:zhu7} after tensoring with $\sO_{\widetilde\cX_\sfG^\mix}\otimes\sQ^{1-n}\otimes\pi^*\sD_\sfG$, that is, the cofiber sequence
      \begin{align*}
      \rR\pi_*\(\sO_{\widetilde\cX_\sfG^\mix}\otimes\sQ^{1-n}\otimes\sP\)\otimes \sD_\sfG \to \sA_{\sfG',\sfK'}\otimes\sS_\sfG
      \to \rR\pi_*\(\sO_{\widetilde\cX_\sfG^\mix}\otimes\sQ^{2-n}\)\otimes \sD_\sfG.
      \end{align*}

  \item Under the unipotent categorical local Langlands correspondence $\dL_{\sfG}$, the local counterpart of \eqref{eq:zhu2} (with coefficients in $\dE$) after applying $\r{Nt}_*$ corresponds to the pushforward of \eqref{eq:zhu5} after tensoring with $\sQ$, that is, the cofiber sequence
      \begin{align*}
      \rR\pi_*\(\sO_{\widetilde\cX_\sfG^\circ}\otimes\sQ^{-n}\)\otimes \sD_\sfG \to \rR\pi_*\(\sO_{\widetilde\cX_\sfG^\mix}\otimes\sQ^{2-n}\)\otimes \sD_\sfG
      \to \rR\pi_*\(\sO_{\widetilde\cX_\sfG^\bullet}\otimes\sQ^{2-n}\)\otimes \sD_\sfG.
      \end{align*}

\end{enumerate}
\end{conjecture}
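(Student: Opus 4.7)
The plan is to derive all three parts from the unipotent categorical local Langlands correspondence $\dL_\sfG$ of \cite{Zhu25}, by tracking how $\dL_\sfG$ interacts with parahoric compact inductions and with the nearby cycles filtration. For part (1), the identification $\sA_{\sfG,\sfK}\simeq\sO_{\cX_\sfG^\unr}$ is essentially built into the construction of $\dL_\sfG$ via the categorical geometric Satake equivalence applied to the hyperspecial parahoric; once the Whittaker normalization is fixed, the spherical unit object on the automorphic side corresponds to the structure sheaf of the unramified component. For $\sA_{\sfG',\sfK'}$, the strategy is to realize $\cind_{\sfK'}^{\sfG'(\dQ_p)}\dE$ as $\mathrm{Nt}_*\sO_{\r{Sht}^\loc_{\{0,n-1\}}}$ on $\r{Isoc}_\sfG$, apply $\dL_\sfG$, and then use the local Langlands parameter interpretation of Remark \ref{rem: XG as fixed point stack} to identify the result with $\rR\pi_*\(\sO_{\widetilde\cX_\sfG^\mix}\otimes\sQ^{1-n}\)\otimes\sD_\sfG$; the identification of $\sA_{\sfG,\sfK^\bullet}$ then reduces to the $\sfK$ case (when $n$ is even, as $\sG_{\{r\}}$ is hyperspecial and differs from $\sG_{\{0\}}$ by an outer twist encoded in $\sD_\sfG$) or to the $\sfK'$ case (when $n$ is odd, via a Jacquet--Langlands-type compatibility between $\sG_{\{r,n-r\}}$ on $\sfG$ and $\sG_{\{0,n-1\}}$ on $\sfG'$), up to the same line bundle and a cyclotomic twist.

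For (2), the first step is to identify, via $\dL_\sfG$, the pushforward of $\rR\Psi(\dZ_\ell[n-1])$ to $\r{Isoc}_\sfG$ with the spectral-side object $\sA_{\sfG',\sfK'}\otimes\sS_\sfG$ (which follows from (1) combined with the Hecke action of the standard representation $\sS_\sfG$ on the parahoric induction). The three-term weight filtration \eqref{eq:zhu1} should then correspond under $\dL_\sfG$ to the cofiber sequence obtained from the universal short exact sequence \eqref{eq:zhu7} by tensoring with $\sO_{\widetilde\cX_\sfG^\mix}\otimes\sQ^{1-n}\otimes\pi^*\sD_\sfG$. The matching here is governed by the geometry of the partial Grothendieck--Springer resolution $\r{Spr}_{\hat\sfP}$: the stratum $\rM^\circ$ (a projective bundle with Fermat-hypersurface locus $\rM^\dag$) corresponds to $\widetilde\cX_\sfG^\circ$ (the cotangent-type bundle on $\dP(\sS_\sfG\res_{\cX_\sfG^\unr})$ from Lemma \ref{lem: coherent sheaf for balloon stratum}), while the open complement $\rM^\bullet\setminus\rM^\dag$ (carrying the Deligne--Lusztig fibration) corresponds to the complement of $\widetilde\cX_\sfG^\circ$ in $\widetilde\cX_\sfG^\mix$. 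Part (3) similarly matches \eqref{eq:zhu2} with \eqref{eq:zhu5} tensored by $\sQ$, which is the natural cofiber sequence associated with the decomposition $(\r{St}_{\hat\sfP,\hat\sfQ})_\cl=\r{St}_{\hat\sfP,\hat\sfQ}^\circ\cup\r{St}_{\hat\sfP,\hat\sfQ}^\bullet$. The Cohen--Macaulayness, generic-rank, and support statements for $\sA_{\sfG',\sfK'}$ can then be read off from a stratum-by-stratum computation of $\rR\pi_*$ along the $\dP^{n-1}$-fibers using the projection formula and the cohomology of line bundles on projective space; the parity dichotomy (support on $\cX_\sfG^\mnm$ in the even case versus rank one on $\cX_\sfG^\unr$ in the odd case) arises from whether $\sQ^{1-n}$ restricts to a negative line bundle of ``critical'' degree on the fibers.

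The main obstacle will be establishing the precise compatibility between the nearby cycles functor on $\r{Isoc}_\sfG$ and the Grothendieck--Springer-type stratification of $\widetilde\cX_\sfG$. While this compatibility is natural and fits with the $S=T$ philosophy underlying \cite{Zhu20}, the construction of $\dL_\sfG$ in \cite{Zhu25} is formulated primarily at the level of objects rather than at the level of natural filtrations, and the needed dictionary between local models of Shimura varieties and parabolic/unipotent structures on the dual side -- hinted at by \cite{XZ} -- would need to be made fully precise in our generality. Additional care is required to pin down the line-bundle twists involving $\sQ$, $\sD_\sfG$, and powers of the cyclotomic character compatibly with the hermitian form \eqref{eq:Jn} and with the $\mu_2$-gerbe structure on $\cX_\sfG$ from Remark \ref{rem: determinant of tautological representation}; an error at this bookkeeping level would propagate into incorrect identifications of the cofiber sequences. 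Finally, the passage from $\ol\dQ_\ell$ to $\ol\dF_\ell$ coefficients imposes the restrictions on $\ell$ present in \cite{Zhu25}, but once the characteristic-zero compatibility is established, mod-$\ell$ specialization should follow from the integrality of the proposed coherent sheaves on $\widetilde\cX_\sfG$ and a standard flatness argument.
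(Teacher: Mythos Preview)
The statement you are attempting to prove is labeled a \emph{conjecture} in the paper, and the paper does not provide a proof. Immediately before stating it, the authors write that ``The following conjecture should be within reach,'' and immediately after, they write ``Assuming this conjecture, let us outline alternative proofs of some results in previous sections.'' There is therefore no proof in the paper against which to compare your proposal.

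That said, your proposal is better described as a research outline than a proof, and you are candid about this: you explicitly flag that ``the main obstacle will be establishing the precise compatibility between the nearby cycles functor on $\r{Isoc}_\sfG$ and the Grothendieck--Springer-type stratification of $\widetilde\cX_\sfG$,'' and that the dictionary ``would need to be made fully precise in our generality.'' This is an honest assessment, and the strategy you sketch---deducing part~(1) from the construction of $\dL_\sfG$ and geometric Satake, then matching the weight filtration on nearby cycles with the filtration coming from $\r{St}_{\hat\sfP,\hat\sfQ}$---is in the spirit of what the authors seem to have in mind and is consistent with the framework of \cite{Zhu20,Zhu25,XZ,YZ} that they cite. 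The support and Cohen--Macaulay claims for $\sA_{\sfG',\sfK'}$ via fiberwise cohomology of $\sQ^{1-n}$ on $\dP^{n-1}$ are also the natural approach. But none of this constitutes a proof: the key step, namely that $\dL_\sfG$ carries the specific cofiber sequences on $\r{Isoc}_\sfG$ to the specific cofiber sequences on $\cX_\sfG$, is asserted rather than established, and the bookkeeping of twists you mention is genuinely delicate and not carried out. In short, your proposal is a plausible roadmap toward the conjecture, aligned with the paper's expectations, but it is not a proof---nor does the paper claim one.
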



Assuming this conjecture, let us outline alternative proofs of some results in previous sections, following some ideas of \cite{Yan25}. We need the proposition below as an additional input.

\begin{proposition}\label{prop: Coh of Shimura variety}
There is an (ind-)coherent complex $\Igs^{\r{spec}}$ on $\cX_{\sfG}$, equipped with the prime-to-$p$ Hecke algebra action, such that for every $\ell$-adic constructible sheaf $F$ (with coefficients in $\dE$) on $\rM$ obtained as the ($!$-)pullback of a sheaf $F^{\loc}$ on $\rM^{\loc}\subset \r{Sht}^{\loc}_{\{0,n-1\}}$ that appears in the local counterpart of \eqref{eq:zhu1}, \eqref{eq:zhu2}, or \eqref{eq:zhu3}, there is a canonical Hecke-equivariant isomorphism
\[
\rR\Gamma(\overline\rM, \mathbb DF)\simeq \rR\!\Hom_{\cX_G}\(\dL_{\sfG}(\r{Nt}_*F^\loc), \Igs^{\r{spec}}\),
\]
where $\dD$ denotes the Verdier duality operator.
\end{proposition}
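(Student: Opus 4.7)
The plan is to define $\Igs^{\r{spec}}$ as the image under the categorical local Langlands functor $\dL_\sfG$ of a canonical ``Igusa sheaf'' on $\r{Isoc}_\sfG\otimes\dF_{p^2}$, and to deduce the stated duality by transporting an automorphic-side adjunction chain through $\dL_\sfG$.

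Let $\pi^{\r{Nt}}\colon\rM\to\r{Isoc}_\sfG\otimes\dF_{p^2}$ denote the composition of $\loc_p$, the closed immersion $\rM^{\loc}\hookrightarrow\r{Sht}^{\loc}_{\{0,n-1\}}$, and $\r{Nt}$. First I would construct the local object
\[
\Igs^{\r{Nt}}\coloneqq (\pi^{\r{Nt}})_{*}\omega_\rM,
\]
where $\omega_\rM$ is the dualizing complex; it carries a prime-to-$p$ Hecke action coming from the finite \'{e}tale Hecke correspondences on $\rM$, which commute with $\pi^{\r{Nt}}$. I then put $\Igs^{\r{spec}}\coloneqq\dL_\sfG(\Igs^{\r{Nt}})$, viewed as an ind-coherent complex on $\cX_\sfG$ with the induced Hecke action.

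For the required isomorphism, Verdier duality combined with the pull-push adjunctions along $\loc_p$ and $\r{Nt}\circ i$ (where $i$ denotes the closed immersion) give, for $F^{\loc}$ extended by zero from $\rM^{\loc}$ to $\r{Sht}^{\loc}_{\{0,n-1\}}$,
\[
\rR\Gamma(\overline{\rM},\dD F)\simeq \rR\!\Hom_\rM(F,\omega_\rM)\simeq \rR\!\Hom_{\r{Isoc}_\sfG}\bigl(\r{Nt}_{*}F^{\loc},\Igs^{\r{Nt}}\bigr).
\]
Transporting the right-hand side through $\dL_\sfG$, which by construction intertwines inner $\rR\!\Hom$'s on the two sides of the unipotent categorical local Langlands correspondence, yields
\[
\rR\Gamma(\overline{\rM},\dD F)\simeq \rR\!\Hom_{\cX_\sfG}\bigl(\dL_\sfG(\r{Nt}_{*}F^{\loc}),\Igs^{\r{spec}}\bigr).
\]
For the three-term cofiber sequences \eqref{eq:zhu1}, \eqref{eq:zhu2}, \eqref{eq:zhu3} the argument applies termwise, using the local model theorem in Proposition \ref{pr:moduli} to match the stratifications of $\rM$ and $\rM^{\loc}$; Hecke-equivariance is tautological.

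The main obstacle will be the last transport step. Although the unipotent categorical local Langlands $\dL_\sfG$ is established in \cite{Zhu25} over $\ol\dQ_\ell$ and, under mild restrictions on $\ell$, over $\ol\dF_\ell$, producing a \emph{single} ind-coherent complex $\Igs^{\r{spec}}$ on $\cX_\sfG$ that encodes the cohomology of \emph{all} admissible $F$ uniformly -- preserving the full prime-to-$p$ Hecke action and compatible with varying levels away from $p$ -- requires a functorial, Hecke-equivariant refinement of $\dL_\sfG$ on an ind-completion large enough to contain both $\r{Nt}_{*}F^{\loc}$ and $\Igs^{\r{Nt}}$. Both the existence of the requisite right adjoint to $(\r{Nt}\circ i)_{*}$ (needed to identify $\Igs^{\r{Nt}}$ as the correct partner of $\r{Nt}_{*}F^{\loc}$ on the automorphic side) and this Hecke-equivariant packaging are part of the framework of \cite{EZ} alluded to at the beginning of this section, so the bulk of a rigorous proof would consist in fitting the construction above into that formalism and checking that each adjunction transports cleanly to the spectral side.
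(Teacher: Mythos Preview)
Your outline is essentially the strategy behind the two citations that constitute the paper's proof: define an Igusa object on $\r{Isoc}_\sfG$, compute cohomology of $\overline{\rM}$ as $\rR\!\Hom$ into it on the automorphic side, then transport through $\dL_\sfG$. The paper does not spell any of this out; it simply invokes \cite{Zhu25}*{Theorem~6.16} for the categorical equivalence and \cite{YZ}*{Theorem~4.20} for the Igusa-sheaf comparison, so your sketch is a fair unpacking of what those references supply.

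Two points are worth flagging. First, your adjunction chain
\[
\rR\!\Hom_\rM(F,\omega_\rM)\simeq \rR\!\Hom_{\r{Isoc}_\sfG}\bigl(\r{Nt}_{*}F^{\loc},\Igs^{\r{Nt}}\bigr)
\]
is not a formal consequence of the six-functor formalism: there is no off-the-shelf adjunction of the shape $\rR\!\Hom(f_*A,B)\simeq\rR\!\Hom(A,?)$, and $\r{Nt}_*$ is not fully faithful in general. In \cite{YZ} the object $\Igs$ is built from the Igusa stack rather than as the na\"ive pushforward $(\pi^{\r{Nt}})_*\omega_\rM$, and the comparison isomorphism is a theorem (a refinement of Mantovan's product formula), not an adjunction. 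Your definition happens to agree with theirs after the dust settles, but the identification is content, not formality. Second, the obstacle you flag in your last paragraph --- that one needs $\dL_\sfG$ to be defined and Hom-preserving on a large enough ind-completion containing both $\r{Nt}_*F^{\loc}$ and $\Igs^{\r{Nt}}$ --- is exactly what \cite{Zhu25}*{Theorem~6.16} provides; you have correctly located where the real work lies.
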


\begin{proof}
This is a consequence of \cite{Zhu25}*{Theorem 6.16} and \cite{YZ}*{Theorem 4.20}.
\end{proof}


\begin{example}\label{example: Coh of Shimura variety}
For examples, we have the following more precise descriptions.
\begin{enumerate}

  \item When $F=\rR\Psi(\dE[n-1])$ in the above proposition, we can identify the left-hand side with the cohomology of the geometric generic fiber of $\bM$ (up to shifts and Tate twists), and therefore is equipped with an action of the (parahoric) Hecke algebra for $\sfK'$. On the other hand, since $\dL_{\sfG}(\r{Nt}_*F^\loc)\simeq\sA_{\sfG',\sfK'}\otimes\sS_{\sfG}$, the right-hand side is also equipped with an action of the Hecke algebra for $\sfK'$, through its action on $\sA_{\sfG',\sfK'}$. Then the isomorphism in the proposition is also compatible with the action of this Hecke algebra on both sides by \cite{YZ}*{Theorem 4.20}. In addition, under this isomorphism, the Galois action on the left-hand side corresponds to the universal action $\rho_{\univ}$ on $\sS_{\sfG}$ on the right-hand side.

  \item When $F=\dE_{\rM^{\circ}}[n-1]$ in the above proposition, we can identify the left-hand side with the cohomology of $\ol\rM^{\circ}$ (up to shifts and Tate twist), which can then be identified with $\bigoplus_{j=0}^{n-1} \dE[\rS^\circ][1-n+2j](j)$. On the other hand, using Lemma \ref{lem: coherent sheaf for balloon stratum} and Conjecture \ref{conj: CLLC}, the right-hand side is nothing but
      \[
      \rR\!\Hom_{\cX_G}\(\bigoplus_{j=0}^{n-1}\sO_{\cX_{\sfG}^{\unr}}[1-n+2j](j), \Igs^{\r{spec}}\).
      \]
      It follows that
      \[
      \dE[\rS^\circ]\simeq \rR\!\Hom_{\cX_G}\(\sO_{\cX_{\sfG}^{\unr}}, \Igs^{\r{spec}}\).
      \]
      When $n$ is even and let $\tN$ be the Hecke operator as in \eqref{eq:deformation_1} and Example \ref{ex: Satake for min}. Then
      \[
      \dE[\rS^\circ]/\tN\dE[\rS^\circ]\simeq\rH^1\rR\!\Hom_{\cX_G}\(\sO_{\cX_{\sfG}^{\r{cong}}}, \Igs^{\r{spec}}\).
      \]
\end{enumerate}
\end{example}

In the sequel, for a (complex of) $\fZ_{\sfG}^{\unr}$-module $L$, we let $L_\xi$ denote its localization at $\xi$. We also identify $\fZ_{\sfG}^{\unr}$ with the spherical Hecke algebra of $\sfG$ via the Satake isomorphism (after choosing $\sqrt{p}$).

\begin{remark}
Let $\fn$ be a maximal ideal of the prime-to-$p$ Hecke algebra. Then we have the localized (ind-)coherent complex $\Igs^{\r{spec}}_{\fn}$ at $\fn$, and an isomorphism
\[
\rR\!\Hom_{\cX_G}\(\dL_{\sfG}(\r{Nt}_*F^\loc), \Igs^{\r{spec}}\)_\fn\simeq
\rR\!\Hom_{\cX_G}\(\dL_{\sfG}(\r{Nt}_*F^\loc), \Igs^{\r{spec}}_{\fn}\).
\]
Similarly, given a point $\xi\in \Spec \fZ_{\sfG}$, we also have the localized $\Igs^{\r{spec}}_{\xi}$ at $\xi$, and an isomorphism
\[
\rR\!\Hom_{\cX_G}\(\dL_{\sfG}(\r{Nt}_*F^\loc), \Igs^{\r{spec}}\)_{\xi}\simeq
\rR\!\Hom_{\cX_G}\(\dL_{\sfG}(\r{Nt}_*F^\loc), \Igs^{\r{spec}}_{\xi}\).
\]
\end{remark}

First, suppose that $n=2r+1$ is odd. Let $U\subset \cX_{\sfG}^{\unr}$ be the open substack classifying unramified $L$-parameter $\rho$ whose associated unitary Satake parameter does not contain $\{-p,-p^{-1}\}$ and contains $1$ exactly once. Note that  $U$ is disjoint from $\cX_{\sfG}^{\mnm}$ (by Example \ref{ex: Satake for min}) and therefore is also open in $\cX_{\sfG}^{\mix}$. By Conjecture \ref{conj: CLLC}, we have $\sA_{\sfG',\sfK'}\res_U\simeq \sD_{\sfG}(-nr)\res_U$. On the other hand, the sheaf $\widetilde\sS\coloneqq\sS_{\sfG}(r)\res_U$ is equipped with the representation $\widetilde\rho=\rho_{\univ}(r)$. Then the action of $\widetilde\rho(\phi)$ induces a decomposition
\[
\widetilde\sS\simeq \widetilde{\sS}^1\oplus \widetilde{\sS}^{\neq 1}
\]
in the way that $\widetilde\rho(\phi)$ acts on $\widetilde\sS^1\simeq\sD_{\sfG}(-nr)\res_U$ trivially and that $\widetilde\rho(\phi)-1$ acts on $\widetilde\sS^{\neq 1}$ by an automorphism. Taking Remark \ref{rem: determinant of tautological representation} into account, it follows that we have the following cofiber sequence
\[
\sA_{\sfG',\sfK'}\otimes \widetilde\sS\xrightarrow{\widetilde\rho(\phi)-1} \sA_{\sfG',\sfK'}\otimes \widetilde\sS\to \sO_{U}\oplus \sO_U[1].
\]
Combining these observations with Proposition \ref{prop: Coh of Shimura variety} and Example \ref{example: Coh of Shimura variety}, we obtain the following.

\begin{corollary}
Let $\xi\in \Spec \fZ_{\sfG}^{\unr}$ be a point such that the corresponding unitary Satake parameter does not contain $\{-p,-p^{-1}\}$ and contains $1$ exactly once. Then the degree zero cohomology of the cofiber of
\[
\r{R}\Gamma(\rM'\otimes_{\dQ_{p^2}}\overline\dQ_{p}, \dE[2r](r))_{\xi}\xrightarrow{1-\phi} \r{R}\Gamma(\rM'\otimes_{\dQ_{p^2}}\overline\dQ_{p}, \dE[2r](r))_{\xi}
\]
is canonically isomorphic to $\dE[\rS^{\circ}]_\xi$.
\end{corollary}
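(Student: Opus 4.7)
The plan is to apply the contravariant functor $\rR\Hom_{\cX_\sfG}(-,\Igs^{\r{spec}}_\xi)$ to the cofiber sequence
\[
\sA_{\sfG',\sfK'}\otimes\widetilde\sS\xrightarrow{\widetilde\rho(\phi)-1}\sA_{\sfG',\sfK'}\otimes\widetilde\sS\to\sO_U\oplus\sO_U[1]
\]
established in the discussion immediately preceding the corollary, and then to extract $H^0$ using Example \ref{example: Coh of Shimura variety}(2). The two main ingredients are Proposition \ref{prop: Coh of Shimura variety} (to pass from the \'etale cohomology of the generic fiber to coherent $\rR\Hom$ against $\Igs^{\r{spec}}$) and Conjecture \ref{conj: CLLC} (to produce the displayed cofiber sequence on $U$).

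First, I would apply Proposition \ref{prop: Coh of Shimura variety} to $F=\rR\Psi(\dE[n-1])$ on $\rM$ and combine it with Example \ref{example: Coh of Shimura variety}(1) to obtain a canonical Hecke- and Galois-equivariant identification
\[
\rR\Gamma(\rM'\otimes_{\dQ_{p^2}}\overline\dQ_p,\dE[2r](r))_\xi\simeq \rR\Hom_{\cX_\sfG}\bigl(\sA_{\sfG',\sfK'}\otimes\widetilde\sS,\Igs^{\r{spec}}_\xi\bigr),
\]
where $\widetilde\sS=\sS_\sfG(r)\res_U$. Under this isomorphism (by the matching of Galois actions asserted in Example \ref{example: Coh of Shimura variety}(1)), the endomorphism $1-\phi$ on the left transports to the $\rR\Hom$-pullback of $1-\widetilde\rho(\phi)$ on the right, which coincides (up to an overall sign that does not affect cofibers) with the first map of the displayed cofiber sequence.

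Second, applying $\rR\Hom_{\cX_\sfG}(-,\Igs^{\r{spec}}_\xi)$ to the cofiber sequence and rotating the resulting triangle yields
\[
\mathrm{cofib}\bigl(1-\phi\bigr)\simeq \rR\Hom_{\cX_\sfG}(\sO_U\oplus\sO_U[1],\Igs^{\r{spec}}_\xi)[1]\simeq \rR\Hom_{\cX_\sfG}(\sO_U,\Igs^{\r{spec}}_\xi)[1]\oplus \rR\Hom_{\cX_\sfG}(\sO_U,\Igs^{\r{spec}}_\xi).
\]
By Example \ref{example: Coh of Shimura variety}(2), $\rR\Hom_{\cX_\sfG}(\sO_{\cX_\sfG^\unr},\Igs^{\r{spec}})\simeq\dE[\rS^\circ]$ is concentrated in degree zero; since localization at $\xi\in U$ is exact, this yields $\rR\Hom_{\cX_\sfG}(\sO_U,\Igs^{\r{spec}}_\xi)\simeq\dE[\rS^\circ]_\xi$ in degree zero. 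Taking $H^0$ of the decomposition above, the $[1]$-shifted summand contributes $H^{-1}$ of a degree-zero complex (hence vanishes), while the unshifted summand contributes $\dE[\rS^\circ]_\xi$. Combined with the first step, this gives the asserted canonical isomorphism.

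The main expected obstacle is the careful bookkeeping of cohomological shifts, Tate twists, and the direction/sign of the Frobenius as they travel across the categorical equivalence, in particular verifying that the action of $1-\phi$ on \'etale cohomology is genuinely identified with the map induced by $\widetilde\rho(\phi)-1$ on the coherent side (and not with a composition involving Verdier duality or some spurious sign). Beyond this, the proof amounts to a direct invocation of Proposition \ref{prop: Coh of Shimura variety}, Example \ref{example: Coh of Shimura variety}, the cited cofiber sequence, and the exactness of localization at $\xi$.
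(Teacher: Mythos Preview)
Your proposal is correct and follows exactly the approach the paper intends: the paper's own ``proof'' is the single sentence ``Combining these observations with Proposition \ref{prop: Coh of Shimura variety} and Example \ref{example: Coh of Shimura variety}, we obtain the following,'' and you have simply unpacked what that combination means. Your flagged obstacle (the bookkeeping of shifts, twists, and the direction of the Frobenius through the contravariant functor and Verdier duality) is real but routine, and the paper itself treats it only ``up to shifts and Tate twists'' in Example \ref{example: Coh of Shimura variety}(1).
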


Now suppose that we are in the situation of Proposition \ref{pr:tate}. The restriction of the residual representation the Galois representation $V_{1,\lambda}^{\Pi}$ to the decomposition group at $\fp$ is unramified, giving rise to a point $\xi$ as in the above corollary. One checks that
\[
\rH^{n-1}(\rM'\otimes_{\dQ_{p^2}}\overline\dQ_{p},\ol\dF_\ell(r))_{\fn}=\rH^{n-1}(\rM'\otimes_{\dQ_{p^2}}\overline\dQ_{p}, \ol\dF_\ell(r))_{(\fn,\xi)}\subset \rR\Gamma(\rM'\otimes_{\dQ_{p^2}}\overline\dQ_{p},\ol\dF_\ell[2r](r))_{\xi}
\]
is a direct summand. Then taking $\fn$-component of the above corollary recovers part of Proposition \ref{pr:tate} (for coefficients in $\ol\dF_\ell$).\footnote{Note that this argument in fact does not require the assumption that $V^{\Pi}_{1,\lambda}$ is absolutely irreducible.}

Next, suppose that $n$ is even and assume $\ell\neq p^2-1$. Let $U\subset \cX_{\sfG}^{\mnm}$ be the open substack classifying those $\rho$ such that the unitary Satake parameter associated with $\rho^{\r{ss}}$ contains $\{p,p^{-1}\}$ exactly once; put $U^{\r{cong}}=U\cap \cX_{\sfG}^{\r{cong}}$. Write $\widetilde\sS\coloneqq\sS^\vee_{\sfG}(1-r)\res_U$ for simplicity, which is equipped with the representation $\widetilde\rho=\rho_{\univ}^\vee(r-1)$. Then we similarly have a decomposition (depending on the choice of a lifting of the arithmetic Frobenius $\phi$ of $\dQ_{p^2}$)
\begin{equation}\label{eq: dec of universal rep over Xmin}
\widetilde\sS=\widetilde\sS^1\oplus \widetilde\sS^{p^{-2}}\oplus \widetilde\sS^{\not\in\{1,p^{-2}\}}
\end{equation}
according to the action of $\widetilde\rho(\phi)$. In addition, the operator $\sfx\coloneqq\widetilde\rho(\tau)-1$ acts trivially on $\widetilde\sS^{1}\oplus \widetilde\sS^{\not\in\{1,p^{-2}\}}$ trivially and induces an injection
\[
\widetilde\sfx\colon\widetilde\sS^{p^{-2}}\to \widetilde\sS^{1}.
\]
Write $\sL\coloneqq\widetilde\sS^{1}$ for short. Then $\widetilde{\sS}^{p^{-2}}=\sL^\vee(1)$. As the ideal sheaf of $\cX_{\sfG}^{\r{cong}}\subset \cX_{\sfG}^{\mnm}$ is the image of the map $(\sL^\vee)^{\otimes 2}(1)\to\sO_{U}$, the cokernel of $\widetilde\sfx$ is $\sL\res_{U^{\r{cong}}}$.

Recall that $U$ is smooth, which implies that $\sA_{\sfG',\sfK'}\res_U$ is a line bundle (since it is maximal Cohen--Macaulay of generic rank one by Conjecture \ref{conj: CLLC}(1)). In addition, it is self-dual with respect to the Grothendieck--Serre duality. It follows that $\sA_{\sfG',\sfK'}\res_U\simeq \sL^\vee\otimes\sD_{\sfG}(r(n-1))\res_U$.

Let $\xi\in \fZ_{\sfG}$ be a point whose unitary Satake parameter contains $\{p,p^{-1}\}$ exactly once.
The decomposition \eqref{eq: dec of universal rep over Xmin} induces the corresponding decomposition (still depending on the choice of a lifting of $\phi$) of the cohomology
\begin{align*}
\rR\Gamma(\rM'\otimes_{\dQ_{p^2}}\overline\dQ_{p},\ol\dF_\ell[n-1](r))_{\xi}&=
\r{R}\Gamma(\rM'\otimes_{\dQ_{p^2}}\overline\dQ_{p}, \ol\dF_\ell[n-1](r))^1_{\xi}\\
&\oplus \r{R}\Gamma(\rM'\otimes_{\dQ_{p^2}}\overline\dQ_{p}, \ol\dF_\ell[n-1](r))^{p^{-2}}_{\xi} \\
&\oplus \r{R}\Gamma(\rM'\otimes_{\dQ_{p^2}}\overline\dQ_{p}, \ol\dF_\ell[n-1](r))_{\xi}^{\not\in\{1,p^{-2}\}}.
\end{align*}
Now assume that $\r{R}\Gamma(\rM'\otimes_{\dQ_{p^2}}\overline\dQ_{p},\ol\dF_\ell[n-1](r))_{\xi}$ is concentrated in degree zero. Then it is clear that
\[
\rH^1_{\sing}(\dQ_{p^2},\rH^{n-1}(\rM'\otimes_{\dQ_{p^2}}\overline\dQ_{p}, \ol\dF_\ell(r))_{\xi})= \frac{\rH^{n-1}(\rM'\otimes_{\dQ_{p^2}}\overline\dQ_{p}, \ol\dF_\ell(r))^1_{\xi}}{\sfx.\rH^{n-1}(\rM'\otimes_{\dQ_{p^2}}\overline\dQ_{p}, \ol\dF_\ell(r))^{p^{-2}}_{\xi}},
\]
which can be computed as the degree $1$-term of the complex
\[
\rR\!\Hom_{\cX_G}\(\sA_{\sfG',\sfK'}\otimes (\widetilde\sS^{p^{-2}})^\vee/\,\widetilde\sfx^\vee(\widetilde\sS^{1})^\vee, \Igs^{\r{spec}}\)
\simeq
\rR\!\Hom\(\sO_{U^{\r{cong}}}\otimes \sD_{\sfG}(r(1-n)),  \Igs^{\r{spec}}\).
\]
If in addition the unitary Satake parameter of $\xi$ does not contain $-1$, then $\sD_{\sfG}(r(1-n))$ is isomorphic to $\sO_U$ around $\xi$. Therefore, the desired cohomology can be identified with $\ol\dF_\ell[\rS^{\circ}]_{\xi}/\tN\ol\dF_\ell[\rS^{\circ}]_{\xi}$, by Example \ref{example: Coh of Shimura variety}. This reproves part of Proposition \ref{pr:singular} (for coefficients in $\ol\dF_\ell$), which avoids using R=T type results hence removes some extra assumptions.

\begin{bibdiv}
\begin{biblist}

\bib{AC89}{book}{
   author={Arthur, James},
   author={Clozel, Laurent},
   title={Simple algebras, base change, and the advanced theory of the trace
   formula},
   series={Annals of Mathematics Studies},
   volume={120},
   publisher={Princeton University Press, Princeton, NJ},
   date={1989},
   pages={xiv+230},
   isbn={0-691-08517-X},
   isbn={0-691-08518-8},
   review={\MR{1007299}},
}

\bib{Bei87}{article}{
   author={Be\u{\i}linson, A.},
   title={Height pairing between algebraic cycles},
   conference={
      title={Current trends in arithmetical algebraic geometry},
      address={Arcata, Calif.},
      date={1985},
   },
   book={
      series={Contemp. Math.},
      volume={67},
      publisher={Amer. Math. Soc., Providence, RI},
   },
   date={1987},
   pages={1--24},
   review={\MR{902590}},
   doi={10.1090/conm/067/902590},
}

\bib{BPLZZ}{article}{
   author={Beuzart-Plessis, Rapha\"{e}l},
   author={Liu, Yifeng},
   author={Zhang, Wei},
   author={Zhu, Xinwen},
   title={Isolation of cuspidal spectrum, with application to the
   Gan--Gross--Prasad conjecture},
   journal={Ann. of Math. (2)},
   volume={194},
   date={2021},
   number={2},
   pages={519--584},
   issn={0003-486X},
   review={\MR{4298750}},
   doi={10.4007/annals.2021.194.2.5},
}

\bib{BD05}{article}{
   author={Bertolini, M.},
   author={Darmon, H.},
   title={Iwasawa's main conjecture for elliptic curves over anticyclotomic $\mathbb{Z}_p$-extensions},
   journal={Ann. of Math. (2)},
   volume={162},
   date={2005},
   number={1},
   pages={1--64},
   issn={0003-486X},
   review={\MR{2178960}},
   doi={10.4007/annals.2005.162.1},
}

\bib{BG11}{article}{
   author={Buzzard, Kevin},
   author={Gee, Toby},
   title={The conjectural connections between automorphic representations
   and Galois representations},
   conference={
      title={Automorphic forms and Galois representations. Vol. 1},
   },
   book={
      series={London Math. Soc. Lecture Note Ser.},
      volume={414},
      publisher={Cambridge Univ. Press, Cambridge},
   },
   isbn={978-1-107-69192-6},
   date={2014},
   pages={135--187},
   review={\MR{3444225}},
   doi={10.1017/CBO9781107446335.006},
}

\bib{BK90}{article}{
   author={Bloch, Spencer},
   author={Kato, Kazuya},
   title={$L$-functions and Tamagawa numbers of motives},
   conference={
      title={The Grothendieck Festschrift, Vol.\ I},
   },
   book={
      series={Progr. Math.},
      volume={86},
      publisher={Birkh\"auser Boston},
      place={Boston, MA},
   },
   date={1990},
   pages={333--400},
   review={\MR{1086888 (92g:11063)}},
}

\bib{CS17}{article}{
   author={Caraiani, Ana},
   author={Scholze, Peter},
   title={On the generic part of the cohomology of compact unitary Shimura
   varieties},
   journal={Ann. of Math. (2)},
   volume={186},
   date={2017},
   number={3},
   pages={649--766},
   issn={0003-486X},
   review={\MR{3702677}},
   doi={10.4007/annals.2017.186.3.1},
}

\bib{DR17}{article}{
   author={Darmon, Henri},
   author={Rotger, Victor},
   title={Diagonal cycles and Euler systems II: The Birch and
   Swinnerton-Dyer conjecture for Hasse-Weil-Artin $L$-functions},
   journal={J. Amer. Math. Soc.},
   volume={30},
   date={2017},
   number={3},
   pages={601--672},
   issn={0894-0347},
   review={\MR{3630084}},
   doi={10.1090/jams/861},
}

\bib{GGP12}{article}{
   author={Gan, Wee Teck},
   author={Gross, Benedict H.},
   author={Prasad, Dipendra},
   title={Symplectic local root numbers, central critical $L$ values, and
   restriction problems in the representation theory of classical groups},
   language={English, with English and French summaries},
   note={Sur les conjectures de Gross et Prasad. I},
   journal={Ast\'erisque},
   number={346},
   date={2012},
   pages={1--109},
   issn={0303-1179},
   isbn={978-2-85629-348-5},
   review={\MR{3202556}},
}

\bib{EZ}{article}{
   author={Emerton, M.},
   author={Zhu, X.},
   note={in preparation},
}

\bib{GZ86}{article}{
   author={Gross, Benedict H.},
   author={Zagier, Don B.},
   title={Heegner points and derivatives of $L$-series},
   journal={Invent. Math.},
   volume={84},
   date={1986},
   number={2},
   pages={225--320},
   issn={0020-9910},
   review={\MR{833192}},
   doi={10.1007/BF01388809},
}

\bib{How06}{article}{
   author={Howard, Benjamin},
   title={Bipartite Euler systems},
   journal={J. Reine Angew. Math.},
   volume={597},
   date={2006},
   pages={1--25},
   issn={0075-4102},
   review={\MR{2264314}},
   doi={10.1515/CRELLE.2006.062},
}

\bib{Jan89}{article}{
   author={Jannsen, Uwe},
   title={On the $l$-adic cohomology of varieties over number fields and its
   Galois cohomology},
   conference={
      title={Galois groups over $\mathbf{Q}$},
      address={Berkeley, CA},
      date={1987},
   },
   book={
      series={Math. Sci. Res. Inst. Publ.},
      volume={16},
      publisher={Springer, New York},
   },
   date={1989},
   pages={315--360},
   review={\MR{1012170}},
}

\bib{Kol90}{article}{
   author={Kolyvagin, V. A.},
   title={Euler systems},
   conference={
      title={The Grothendieck Festschrift, Vol.\ II},
   },
   book={
      series={Progr. Math.},
      volume={87},
      publisher={Birkh\"auser Boston},
      place={Boston, MA},
   },
   date={1990},
   pages={435--483},
   review={\MR{1106906 (92g:11109)}},
}

\bib{LZ}{article}{
   author={Li, Chao},
   author={Zhang, Wei},
   title={A note on Tate's conjectures for abelian varieties},
   journal={Essent. Number Theory},
   volume={1},
   date={2022},
   number={1},
   pages={41--50},
   issn={2834-4626},
   review={\MR{4573250}},
   doi={10.2140/ent.2022.1.41},
}

\bib{Liu1}{article}{
   author={Liu, Yifeng},
   title={Hirzebruch-Zagier cycles and twisted triple product Selmer groups},
   journal={Invent. Math.},
   volume={205},
   date={2016},
   number={3},
   pages={693--780},
   issn={0020-9910},
   review={\MR{3539925}},
   doi={10.1007/s00222-016-0645-9},
}

\bib{Liu2}{article}{
   author={Liu, Yifeng},
   title={Bounding cubic-triple product Selmer groups of elliptic curves},
   journal={J. Eur. Math. Soc. (JEMS)},
   volume={21},
   date={2019},
   number={5},
   pages={1411--1508},
   issn={1435-9855},
   review={\MR{3941496}},
   doi={10.4171/JEMS/865},
}

\bib{LTX}{article}{
   author={Liu, Yifeng},
   author={Tian, Yichao},
   author={Xiao, Liang},
   title={Iwasawa's main conjecture for Rankin--Selberg motives in the anticyclotomic case},
   note={\href{https://arxiv.org/abs/2406.00624}{arXiv:2406.00624}},
}

\bib{LTXZZ}{article}{
   author={Liu, Yifeng},
   author={Tian, Yichao},
   author={Xiao, Liang},
   author={Zhang, Wei},
   author={Zhu, Xinwen},
   title={On the Beilinson--Bloch--Kato conjecture for Rankin--Selberg motives},
   journal={Invent. Math.},
   volume={228},
   date={2022},
   number={1},
   pages={107--375},
   issn={0020-9910},
   review={\MR{4392458}},
   doi={10.1007/s00222-021-01088-4},
}

\bib{LTXZZ2}{article}{
   author={Liu, Yi Feng},
   author={Tian, Yi Chao},
   author={Xiao, Liang},
   author={Zhang, Wei},
   author={Zhu, Xin Wen},
   title={Deformation of rigid conjugate self-dual Galois representations},
   journal={Acta Math. Sin. (Engl. Ser.)},
   volume={40},
   date={2024},
   number={7},
   pages={1599--1644},
   issn={1439-8516},
   review={\MR{4777059}},
   doi={10.1007/s10114-024-1409-x},
}

\bib{NT1}{article}{
   author={Newton, James},
   author={Thorne, Jack A.},
   title={Symmetric power functoriality for holomorphic modular forms},
   journal={Publ. Math. Inst. Hautes \'{E}tudes Sci.},
   volume={134},
   date={2021},
   pages={1--116},
   issn={0073-8301},
   review={\MR{4349240}},
   doi={10.1007/s10240-021-00127-3},
}

\bib{NT2}{article}{
   author={Newton, James},
   author={Thorne, Jack A.},
   title={Symmetric power functoriality for holomorphic modular forms, II},
   journal={Publ. Math. Inst. Hautes \'{E}tudes Sci.},
   volume={134},
   date={2021},
   pages={117--152},
   issn={0073-8301},
   review={\MR{4349241}},
   doi={10.1007/s10240-021-00126-4},
}

\bib{NT3}{article}{
   author={Newton, James},
   author={Thorne, Jack A.},
   title={Symmetric power functoriality for Hilbert modular forms},
   note={\href{https://arxiv.org/abs/2212.03595v1}{arXiv:2212.03595v1}},
}

\bib{Rib89}{article}{
   author={Ribet, Kenneth A.},
   title={Bimodules and abelian surfaces},
   conference={
      title={Algebraic number theory},
   },
   book={
      series={Adv. Stud. Pure Math.},
      volume={17},
      publisher={Academic Press, Boston, MA},
   },
   isbn={0-12-177370-1},
   date={1989},
   pages={359--407},
   review={\MR{1097624}},
}

\bib{Rib90}{article}{
   author={Ribet, K. A.},
   title={On modular representations of $\mathrm{Gal}(\overline{\bf Q}/{\bf
   Q})$ arising from modular forms},
   journal={Invent. Math.},
   volume={100},
   date={1990},
   number={2},
   pages={431--476},
   issn={0020-9910},
   review={\MR{1047143}},
   doi={10.1007/BF01231195},
}

\bib{Ser72}{article}{
   author={Serre, Jean-Pierre},
   title={Propri\'et\'es galoisiennes des points d'ordre fini des courbes elliptiques},
   language={French},
   journal={Invent. Math.},
   volume={15},
   date={1972},
   number={4},
   pages={259--331},
   issn={0020-9910},
   review={\MR{0387283 (52 \#8126)}},
}

\bib{Tat65}{article}{
   author={Tate, John T.},
   title={Algebraic cycles and poles of zeta functions},
   conference={
      title={Arithmetical Algebraic Geometry},
      address={Proc. Conf. Purdue Univ.},
      date={1963},
   },
   book={
      publisher={Harper \& Row, New York},
   },
   date={1965},
   pages={93--110},
   review={\MR{0225778}},
}

\bib{Tay89}{article}{
   author={Taylor, Richard},
   title={On Galois representations associated to Hilbert modular forms},
   journal={Invent. Math.},
   volume={98},
   date={1989},
   number={2},
   pages={265--280},
   issn={0020-9910},
   review={\MR{1016264}},
   doi={10.1007/BF01388853},
}

\bib{XZ}{article}{
   author={Xiao, L.},
   author={Zhu, X.},
   title={Cycles on Shimura varieties via geometric Satake},
   note={\href{https://arxiv.org/abs/1707.05700}{arXiv:1707.05700}},
}

\bib{Yan25}{article}{
   author={Yang, X.},
   title={On Ihara's lemma for definite unitary groups},
   note={\href{https://arxiv.org/abs/2504.07504}{arXiv:2504.07504v2}},
}

\bib{YZ}{article}{
   author={Yang, X.},
   author={Zhu, X.},
   title={On the generic part of the cohomology of shimura varieties of abelian type},
   note={\href{https://arxiv.org/abs/2505.04329}{arXiv:2505.04329v1}},
}

\bib{Zhu20}{article}{
   author={Zhu, Xinwen},
   title={Coherent sheaves on the stack of Langlands parameters},
   note={\href{https://arxiv.org/abs/2008.02998}{arXiv:2008.02998v2}},
}

\bib{Zhu25}{article}{
   author={Zhu, Xinwen},
   title={Tame categorical local Langlands correspondence},
   note={\href{https://arxiv.org/abs/2504.07482}{arXiv:2504.07482v1}},
}

\bib{Zhu24}{article}{
   author={Zhu, Xinwen},
   title={A note on integral Satake isomorphisms},
   conference={
      title={Arithmetic geometry},
   },
   book={
      series={Tata Inst. Fundam. Res. Stud. Math.},
      volume={41},
      publisher={Tata Inst. Fund. Res., Mumbai},
   },
   isbn={978-81-957829-7-0},
   date={2024},
   pages={469--489},
   review={\MR{4812712}},
}

\end{biblist}
\end{bibdiv}

\end{document}